\numberwithin{equation}{section}
\newcommand{\coloneqq}{:=}
\newcommand{\E}{\mathbf{E}}
\newcommand{\p}{\mathbf{P}}
\newcommand{\UH}{\mathbf{H}}
\newcommand{\C}{\mathbf{C}}
\newcommand{\R}{\mathbf{R}}
\newcommand{\D}{\mathbf{D}}
\newcommand{\N}{\mathbf{N}}
\newcommand{\dist}{\mathrm{dist}}
\newcommand{\SLE}{\mathrm{SLE}}
\newcommand{\CLE}{\mathrm{CLE}}
\renewcommand{\epsilon}{\varepsilon}
\newcommand{\SSS}{\mathscr{S}}
\newcommand{\G}{\mathscr{G}}
\newcommand{\F}{\mathscr{F}}
\newcommand{\M}{\mathfrak{M}}
\newcommand{\wt}{\widetilde}
\newcommand{\wh}{\widehat}
\newcommand{\wc}{\widecheck}
\newcommand{\cont}{\mathrm{Cont}}
\newcommand{\re}{\mathrm{Re}}
\newcommand{\im}{\mathrm{Im}}
\newcommand{\FH}{\mathfrak{H}}
\newcommand{\area}{\mathrm{Area}}
\newcommand{\CM}{\mathcal{M}}
\newcommand{\Fh}{\mathfrak{h}}
\newcommand{\one}{\mathbf{1}}
\newcommand{\SB}{\mathscr{B}}
\newcommand{\CG}{\mathcal{G}}
\newcommand{\diam}{\mathrm{diam}}
\newcommand{\CQ}{\mathcal{Q}}
\newcommand{\Po}{\mathrm{Poisson}}
\newcommand{\hcap}{\mathrm{hcap}}
\newcommand{\CS}{{\mathcal S}}
\newcommand{\CL}{{\mathcal L}}
\newtheorem{thm}{Theorem}[section]
\newtheorem{lem}[thm]{Lemma}
\newtheorem{prop}[thm]{Proposition}
\theoremstyle{definition}
\newtheorem{defin}[thm]{Definition}
\newtheorem{rmk}[thm]{Remark}
\newcommand{\giv}{\,|\,}
\newcommand{\Z}{{\mathbf Z}}
\newcommand{\ol}{\overline}
\newcommand{\ul}{\underline}
\begin{document}

\title[Existence and uniqueness of the conformally covariant volume measure on CLEs]{Existence and uniqueness of the conformally covariant volume measure on conformal loop ensembles}

\author{Jason Miller and Lukas Schoug}

\begin{abstract}
We prove the existence and uniqueness of the canonical conformally covariant volume measure on the carpet/gasket of a conformal loop ensemble ($\CLE_\kappa$, $\kappa \in (8/3,8)$) which respects the Markov property for $\CLE$.  The starting point for the construction is the existence of the canonical measure on $\CLE$ in the context of Liouville quantum gravity (LQG) previously constructed by the first author with Sheffield and Werner.  As a warm-up, we construct the natural parameterization of $\SLE_\kappa$ for $\kappa \in (4,8)$ using LQG which serves to complement earlier work of Benoist on the case $\kappa \in (0,4)$.
\end{abstract}

\date{\today}
\maketitle

\parindent 0 pt
\setlength{\parskip}{0.20cm plus1mm minus1mm}

\setcounter{tocdepth}{1}
\tableofcontents

\section{Introduction}
\label{sec:intro}

The \emph{conformal loop ensembles} ($\CLE_\kappa$, $\kappa \in (8/3,8)$) are the canonical conformally invariant probability measures on locally finite collections of non-crossing loops in a simply connected domain $D \subseteq \C$ \cite{sheffield2009exploration,sw2012cle}.  It is the loop version of the Schramm-Loewner evolution ($\SLE_\kappa$) \cite{s2000sle}.  Just like $\SLE_\kappa$, $\CLE_\kappa$ arises in the context of two-dimensional statistical mechanics as the scaling limit of the collection of all interfaces of certain models at criticality \cite{s2001perc,cn2006fullpercolation,lsw2004lerwust,s2010ising,bh2019ising,ks2019ising}.  The $\CLE_\kappa$'s have the same phases as the $\SLE_\kappa$'s \cite{rs2005basic} in that when $\kappa \in (8/3,4]$ the loops are a.s.\ simple, disjoint, and do not intersect the domain boundary and if $\kappa \in (4,8)$ they are a.s.\ self-intersecting, intersect each other and the domain boundary. The set of points which are not surrounded by a loop in the $\CLE_\kappa$ is called the $\CLE_\kappa$ \emph{carpet} if $\kappa \in (8/3,4]$ and the $\CLE_\kappa$ \emph{gasket} if $\kappa \in (4,8)$.  The reason for this terminology is that in the former (resp.\ latter) phase a $\CLE_\kappa$ is a random analog of the Sierpinski carpet (resp.\ gasket).  It is proven (see \cite{ssw2009confrad,nw2011soupsdim,msw2014gasketdim}) that the dimension of the $\CLE_\kappa$ carpet or gasket is a.s.\ 
\begin{align}
\label{eq:CLEdim}
    d(\kappa) = 2-\frac{(8-\kappa)(3\kappa-8)}{32\kappa} = 1+\frac{2}{\kappa} + \frac{3\kappa}{32} \quad\text{for}\quad \kappa \in (8/3,8).
\end{align}
Note that $d(\kappa) = 2$ at the boundary points $\kappa=8/3,8$ and that $d(\kappa) < 2$ for $\kappa \in (8/3,8)$.  The reason for the former is that a $\CLE_{8/3}$ can be interpreted as being the empty set of loops while a $\CLE_8$ consists of a single space-filling loop.

In this paper, we will construct a measure on the $\CLE_\kappa$ carpet or gasket for each $\kappa \in (8/3,8)$ which is determined by the $\CLE_\kappa$, conformally covariant, and respects the Markov property of $\CLE_\kappa$ and we will prove that this measure is unique up to a multiplicative constant.  We will now introduce what we mean precisely by this.  In doing so, we will make use of the following notation.  Suppose that $\Gamma$ is $\CLE_\kappa$ on a simply connected domain and $U \subseteq D$ is open.  We let $U^\Gamma$ denote the set obtained by removing from~$U$ the closure of the union of the set of points surrounded by those loops of~$\Gamma$ which intersect $U$ and $D \setminus U$.  For $V \subseteq D$ open, we let~$\Gamma_V$ be the loops of~$\Gamma$ which are contained in~$\ol{V}$.

\begin{defin}
\label{def:cov_measure}
Fix $\kappa \in (8/3,8)$.  Suppose that for each simply connected domain $D$ we have a probability measure $\p_D$ on pairs $(\Gamma,\Xi)$ where the marginal law of $\Gamma$ is that of a $\CLE_\kappa$ on $D$ and $\Xi$ is a measure supported on its carpet (if $\kappa \leq 4$) or gasket (if $\kappa > 4$) $\Upsilon$ which is a.s.\ determined by $\Gamma$.
\begin{enumerate}[(i)]
\item\label{it:conf_cov} Then we say that the family $(\p_D)$ is \emph{conformally covariant with exponent $d$} if the following is true.  Suppose that $\varphi \colon D \to \wt{D}$ is a conformal transformation and $(\Gamma,\Xi) \sim \p_D$, $(\wt{\Gamma},\wt{\Xi}) \sim \p_{\wt{D}}$ are coupled together so that $\wt{\Gamma} = \varphi(\Gamma)$.  Then for all Borel sets $A \subseteq D$ we have that
\[ \wt{\Xi}(\varphi(A)) = \int_A |\varphi'(z)|^d d\Xi(z).\]
\item\label{it:markov} If $(\p_D)$ satisfies~\eqref{it:conf_cov} then we say that $(\p_D)$ \emph{respects the $\CLE_\kappa$ Markov property} if the following is true.  Suppose that $(\Gamma,\Xi) \sim \p_D$, $U \subseteq D$ is open and simply connected, and $V$ is a component of $U^\Gamma$.  Then the conditional law of $(\Gamma_V,\Xi|_V)$ given $\Gamma \setminus \Gamma_V$ and $\Xi|_{D \setminus V}$ is $\p_V$.
\item\label{it:locally_finite_expectation} For each compact set $K \subseteq D$ we have that $\E_D[ \Xi(K) ] < \infty$ where $\E_D$ denotes the expectation under $\p_D$.
\end{enumerate}
\end{defin}

Our main result is the following.
 
\begin{thm}
\label{thm:mainresult}
For each $\kappa \in (8/3,8)$ there exists a family $(\p_D)$ of probability measures which satisfy Definition~\ref{def:cov_measure} with exponent $d=d(\kappa)$ as in~\eqref{eq:CLEdim} so that if $(\Gamma,\Xi) \sim \p_D$ then a.s.\ $\Xi(D) > 0$.  Moreover, if $(\wt{\p}_D)$ is another family satisfying Definition~\ref{def:cov_measure} then there exists a constant $K \geq 0$ so that if $(\Gamma,\Xi) \sim \p_D$ then $(\Gamma, K \Xi) \sim \wt{\p}_D$ for every simply connected domain $D \subseteq \C$.
\end{thm}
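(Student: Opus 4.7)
The proof naturally splits into an existence half and a uniqueness half. For existence my plan is to extract the desired Euclidean measure from the quantum natural measure on the $\CLE_\kappa$ carpet/gasket constructed by the first author with Sheffield and Werner, by averaging out an independent Liouville quantum gravity field. For uniqueness my plan is a Markov-type argument in which the ratio of two candidate measures is shown to be determined by the CLE alone and then forced to be an a.s.\ constant.

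\textbf{Existence.} Fix $\gamma \in (\sqrt{8/3}, 2]$ with $\gamma^2 = \kappa$ when $\kappa \leq 4$ and $\gamma = 4/\sqrt{\kappa}$ when $\kappa > 4$. On a simply connected $D$, I would sample a $\CLE_\kappa$ $\Gamma$ together with an independent $\gamma$-LQG surface $(D,h)$, and let $\mu_h$ be the Miller--Sheffield--Werner quantum measure on the carpet/gasket. It is a.s.\ determined by $(\Gamma,h)$ and transforms as a $\gamma$-LQG measure under the LQG coordinate change rule. I then define
\[ \Xi(A) \coloneqq \E[\mu_h(A) \giv \Gamma], \]
so that $\Xi$ is a.s.\ determined by $\Gamma$ alone. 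Conformal covariance~\eqref{it:conf_cov} with exponent $d=d(\kappa)$ should follow by combining (i) the LQG coordinate change for $\mu_h$ under the field transformation $h \mapsto h \circ \varphi^{-1} + Q \log |(\varphi^{-1})'|$, (ii) conformal invariance of the law of $h$ up to the relevant boundary correction, and (iii) a KPZ-type identification of the Euclidean scaling exponent coming out of the expectation with $d(\kappa)$. The Markov property~\eqref{it:markov} is inherited from the domain Markov property of the GFF together with that of $\CLE_\kappa$, and~\eqref{it:locally_finite_expectation} follows from local finiteness of the quantum area.

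\textbf{Uniqueness.} Given two families $(\p_D)$ and $(\wt\p_D)$, I would couple them so that they share the same $\CLE_\kappa$ $\Gamma$. The central observation is that, by~\eqref{it:conf_cov} and~\eqref{it:markov}, for every simply connected pocket $V$ arising in a local exploration of $\Gamma$ the conditional laws of $\Xi|_V$ and $\wt\Xi|_V$ given the outside are governed by $\p_V$ and $\wt\p_V$ respectively, and under conformal uniformization of $V$ to a fixed reference domain both measures rescale by the same factor $|\varphi'|^{d(\kappa)}$. Iterating~\eqref{it:markov} along a nested pocket decomposition of $\Gamma$ then expresses $\wt\Xi/\Xi$ as a limit of averages of ratios of pocket masses at ever finer scales, each distributed like the ratio on the reference domain by the conformal invariance of $\CLE_\kappa$. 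A tail-triviality argument for the $\CLE_\kappa$ branching exploration then forces this ratio to be an a.s.\ constant $K \geq 0$; the existence half ensures $K$ is finite since the measure $\Xi$ produced there is nontrivial.

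\textbf{Main obstacle.} The hardest part is uniqueness. Both $\Xi$ and $\wt\Xi$ are singular fractal measures, so one cannot work with a pointwise Radon--Nikodym derivative and must instead control expected masses of pockets, in the spirit of the Lawler--Sheffield proof of uniqueness of the natural parameterization of $\SLE_\kappa$. Identifying a good canonical exhaustion of $\Gamma$ by nested pockets (via the branching exploration of \cite{sheffield2009exploration}, or via the conformal radii at a typical interior point) and pushing through the corresponding martingale convergence is where the technical heart of the argument lies. On the existence side the main bookkeeping is to verify that averaging $\mu_h$ over the independent $h$ produces a Euclidean measure whose covariance exponent is exactly $d(\kappa)$ and not some other KPZ-related quantity.
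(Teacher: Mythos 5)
Your overall strategy (average an independent LQG field out of the Miller--Sheffield--Werner quantum measure to get existence; exploit the Markov property to get uniqueness) is the same as the paper's, but as written there are genuine gaps on both halves. On existence: the quantum natural measure on the $\CLE_\kappa$ carpet is only constructed in \cite{msw2020simpleclelqg,msw2020nonsimpleclelqg} for $\kappa \neq 4$, since $\kappa=4$ corresponds to the critical value $\gamma=2$; your plan silently includes $\gamma=2$, where the input object does not exist. The paper has to treat $\kappa=4$ separately (Proposition~\ref{prop:CLE4_measure}), taking $\kappa_n \nearrow 4$, using the monotone loop-soup coupling to get Hausdorff convergence of the carpets, tightness and Skorokhod coupling for the measures, and then re-verifying covariance and the Markov property for the limit. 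Also, the bare definition $\Xi(A)=\E[\mu_h(A)\giv\Gamma]$ is \emph{not} exactly conformally covariant: conditioning on $\Gamma$ and averaging over the field produces, under a conformal map, an extra exponential moment of the Gaussian difference field whose variance is a log-ratio of conformal radii. One must multiply by the explicit correction $F_D(z)=r_D(z)^{-(\frac12+\frac{2}{\kappa}+\frac{\kappa}{32})}$ (with the field taken to be a zero-boundary GFF) so that this factor cancels and the exponent comes out to exactly $d(\kappa)$; your ``up to the relevant boundary correction'' is precisely where the computation lives, and without it the covariance in Definition~\ref{def:cov_measure}\eqref{it:conf_cov} fails.

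On uniqueness, the key step in the paper is missing from your plan: Lemma~\ref{lem:density} shows that M\"obius covariance of the intensity plus local finiteness forces $\E[\Xi](dz)=C\,(1-|z|^2)^{d-2}dz$. Once both measures are normalized to have expected mass $1$, the conditional expectations $\E[\Xi\giv\F_n]$ and $\E[\Xi^*\giv\F_n]$ of the masses of the unexplored pockets are literally equal (each is $\sum_j \int_\D |(\varphi_j^n)'|^d(1-|z|^2)^{d-2}dz$ by Lemma~\ref{lem:condint}), and martingale convergence along a filtration generated by loops hitting a dense family of lines gives $\Xi=K\Xi^*$ directly. Your substitute --- expressing $\wt\Xi/\Xi$ as a limit of pocket-mass ratios and invoking tail triviality --- does not obviously work: the two measures are singular fractal measures with no pointwise density, the pocket masses involve the non-constant weight $|\varphi'|^d$ so the ratios are not identically distributed across scales, and you give no argument that the limit exists or that tail triviality applies to it; without the intensity identification there is no mechanism forcing the conditional expected masses of a pocket under the two families to be proportional. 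You also need (and do not mention) that individual loops carry zero $\Xi$-mass, so that all mass sits in the pockets; this is immediate from the LQG construction for $\kappa\neq 4$ but requires a separate argument at $\kappa=4$ (Lemma~\ref{lem:loop_has_zero_mass}).
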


\begin{rmk}
By \cite[Lemma~2.6]{zhan2021loop}, we know that if the $d(\kappa)$-dimensional Minkowski content $\cont_\Upsilon$ of the carpet or gasket of $\Gamma \sim \CLE_\kappa$ exists (usually just referred to as the Minkowski content of $\Gamma$) and has locally finite expectation, then it satisfies Definition~\ref{def:cov_measure}.  Hence, by the uniqueness component of Theorem~\ref{thm:mainresult}, we have that if $\cont_\Upsilon$ exists, is non-trivial, and has locally finite expectation then there exists a deterministic constant $K > 0$ so that it equals $K\Xi$.
\end{rmk}

We construct this family of measures using as a starting point the natural measure on $\CLE_\kappa$ in the context of Liouville quantum gravity (LQG) which has been constructed for $\kappa \in (8/3,8) \setminus \{4\}$ \cite{msw2020simpleclelqg,msw2020nonsimpleclelqg}. The case $\kappa = 4$ is slightly different in the LQG context as it corresponds to the critical case $\gamma = 2$ which is why it is not treated in \cite{msw2020simpleclelqg,msw2020nonsimpleclelqg}.  We will construct the measure in this case by considering a sequence $(\kappa_n)$ with $\kappa_n < 4$ such that $\kappa_n \nearrow 4$ and define the measure as the weak limit as $n \to \infty$ of the measure on the $\CLE_{\kappa_n}$ carpet and show that it is conformally covariant with exponent $d(4)$ and is supported on the $\CLE_4$ carpet.

Before constructing the measures on $\CLE$, we construct the natural parameterization of $\SLE_{\kappa'}$ for $\kappa' \in (4,8)$ using the GFF.  
The natural parameterization was constructed in \cite{ls2011natural,lz2013natural} and is conjectured to be the parameterization of an $\SLE$ which arises as a scaling limit of an interface of a discrete model, where the discrete curve is parameterized by the number of edges which it traverses. It is the unique locally finite measure on $\SLE$ satisfying a certain conformal covariance (see Section~\ref{sec:preliminaries_natural_parameterization}). Our second main result is the following (for the definitions of the various objects, see Section~\ref{sec:prel}).
\begin{thm}\label{thm:mainresult2}
Let $\eta \sim \SLE_{\kappa'}$, $\kappa' \in (4,8)$, let $h$ be a zero-boundary GFF independent of $\eta$ and let $\sigma^{\eta,h}$ denote the generalized quantum length of $\eta$ with respect to $h$. Set
\begin{align*}
	\mu(dz) = F(z) \E\!\left[ \sigma^{\eta,h}(dz) \, \middle| \, \eta \right],
\end{align*}
where $F(z) = r_{\UH}(z)^{-\kappa'/8}$ and $r_{\UH}(z)$ is the conformal radius of $\UH$ seen from $z$. Then, there is a deterministic constant $C>0$ such that $C\mu$ is a.s.\ the natural parameterization of $\eta$.
\end{thm}
It follows from~\cite{ls2011natural} that if $\mu$ is an a.s.\ locally finite measure on $\eta$ which satisfies a certain conformal covariance formula, then $\mu$ must be a (deterministic) constant times the natural parameterization of $\eta$, see Section~\ref{sec:preliminaries_natural_parameterization}.  Thus, Section~\ref{sec:npSLE} is devoted to proving conformal covariance and local finiteness of $\mu$ and hence Theorem~\ref{thm:mainresult2}. This also makes for a good warm-up for proving Theorem~\ref{thm:mainresult}, as the conformal covariance of the measure on $\CLE_\kappa$ is proven similarly, but with more cases and the transformation of quantum length is a bit more subtle in places. Moreover, in proving that $\mu$ is locally finite, we determine the density of the intensity $\E[\mu]$, and the strategy for doing this is in some aspects similar to the strategy for determining the density of the intensity $\E[\Xi]$ (where $\Xi$ is a measure on $\CLE_\kappa$ as in Definition~\ref{def:cov_measure}) which is an important step in proving the uniqueness part of Theorem~\ref{thm:mainresult}.

\subsection*{Related work} In recent years, there have been a number of works focused on constructing natural measures supported on random fractals. In \cite{ls2011natural,lr2015minkowski} the so-called natural parameterization of $\SLE$ is studied and shown to be the Minkowski content of $\SLE$. In \cite{benoist2018natural}, the same measure is constructed for $\kappa \in (0,4)$ via a different method: as the conditional expectation of a certain LQG length measure on the $\SLE_\kappa$ process. This is the strategy we will take in constructing the measures in Sections~\ref{sec:npSLE} and~\ref{sec:ccCLE}. Another related object is a random measure on the so-called two-valued sets of the Gaussian free field (GFF), which was constructed in \cite{ssv2020tvsdim}. It was constructed similarly to the above, but using the imaginary multiplicative chaos (i.e., LQG but with imaginary parameter) and it was shown that if the conformal Minkowski content (which is defined by replacing the Euclidean distance with the conformal radius of the domain minus the fractal in the definition of Minkowski content) exists, then it is equal to the constructed measure.

\subsection*{Outline}

We now give a brief overview of the paper. In Section~\ref{sec:prel} we introduce the preliminary material needed: the Schramm-Loewner evolution, conformal loop ensembles, GFFs and LQG. Section~\ref{sec:npSLE} is devoted to the problem of constructing the natural parameterization of $\SLE_\kappa$, for $\kappa \in (4,8)$ using LQG. In Section~\ref{sec:ccCLE} we prove the existence of the conformally covariant measures on the $\CLE$ carpets/gaskets and Section~\ref{sec:properties} is devoted to proving the uniqueness.

\subsection*{Acknowledgments}

J.M. and L.S.\ were supported by the ERC starting grant 804166 (SPRS). The authors would like to thank an anonymous referee for helpful comments.

\section{Preliminaries}\label{sec:prel}
\subsection{Random measures}
We begin by briefly recalling some notation and definitions related to random measures and we refer the reader to~\cite{kallenberg2017rmbook} for more details. A random measure is a random element taking values in a space of measures on some Borel space. Here we consider measures on~$\C$. Let $\xi$ be a random measure. Then the \emph{intensity} of $\xi$ is the measure $\E[\xi]$ which is defined by $\E[\xi](O) = \E[\xi(O)]$ for each Borel set $O \subseteq \C$.  Furthermore, if $\G$ is a $\sigma$-algebra then the \emph{conditional intensity} is the random element $\E[\xi \, | \, \G]$ defined by $\E[\xi \, | \, \G](O) = \E[ \xi(O) \, | \, \G]$.  By \cite[Corollary~2.17]{kallenberg2017rmbook} we have that if $\xi$ is an a.s.\ locally finite measure and $\E[\xi]$ is locally finite, then there is a version of $\E[\xi \, | \, \G]$ which is an a.s.\ locally finite measure. Finally, for a function $f : \C \to \R$ we write $\xi(f) = \int_{\C} f(z) d\xi(z)$. We remark, that our definition of random measure (and hence the related notions) differs slightly from that of~\cite{kallenberg2017rmbook}, in that we do not require that a random measure is a.s.\ locally finite. We will however prove that the random measures that we are working with are in fact a.s.\ locally finite, and thereafter there is no problem in using the results in~\cite{kallenberg2017rmbook}.

\subsection{Schramm-Loewner evolution}

We will now introduce the Schramm-Loewner evolution ($\SLE$). For more on $\SLE$, see e.g., \cite{lawler2005slebook,rs2005basic}. Fix $\kappa > 0$ and for each $z \in \UH$ let $g_t(z)$ denote the solution to
\begin{align}
\label{eq:LDE}
    \partial_t g_t(z) = \frac{2}{g_t(z)-W_t}, \quad g_0(z)=z,
\end{align}
where $W_t = \sqrt{\kappa} B_t$ and $B_t$ is a standard Brownian motion. The solution to~\eqref{eq:LDE} exists until the time $T_z = \inf\{t >0: \im(g_t(z)) = 0 \}$ and we write $K_t = \{z \in \UH: T_z \leq t \}$.  Then $(g_t)_{t \geq 0}$ is a family of conformal maps called the $\SLE_\kappa$ Loewner chain and $g_t : \UH \setminus K_t \to \UH$ is the unique conformal map with $g_t(z) - z \to 0$ as $z \to \infty$.  It was proved by Rohde and Schramm for $\kappa \neq 8$ \cite{rs2005basic} and by Lawler, Schramm, and Werner for $\kappa = 8$ \cite{lsw2004lerwust} that there a.s.\ exists a continuous curve $\eta$ in $\ol{\UH}$ from $0$ to $\infty$ such that $\UH \setminus K_t$ is the unbounded component of $\UH \setminus \eta([0,t])$.  The curve $\eta$ is an $\SLE_\kappa$ process from $0$ to $\infty$ in $\UH$. The behavior of $\SLE_\kappa$ processes depends heavily on $\kappa$: when $\kappa \leq 4$, they are a.s.\ simple, when $\kappa \in (4,8)$ they have self-intersections and intersect the domain boundary and when $\kappa \geq 8$ they are a.s.\ space-filling \cite{rs2005basic}.

The range of an $\SLE_\kappa$ process is a random fractal of a.s.\ Hausdorff dimension $\min(2,1+\frac{\kappa}{8})$ \cite{rs2005basic,v2008dim} which is scale invariant and satisfies the so-called \emph{domain Markov property}: if $\tau$ is an a.s.\ finite stopping time then the law of $t \mapsto g_\tau(\eta(\tau+t))-W_\tau$ is that of an $\SLE_\kappa$ process in $\UH$ from $0$ to $\infty$. One defines $\SLE_\kappa$ processes in other simply connected domains as the conformal images of $\SLE_\kappa$ in~$\UH$.

The so-called $\SLE_\kappa(\ul{\rho})$ processes \cite[Section~8.3]{lsw2003confres} are a natural generalization of $\SLE_\kappa$ processes where one keeps track of several marked points. More precisely, consider $\underline{z} = (z_1,\dots,z_n)$ where $z_j \in \overline{\UH}$ and let $\underline{\rho} = (\rho_1,\dots,\rho_n)$ where $\rho_j \in \R$. Then the $\SLE_\kappa(\underline{\rho})$ process with force points $\underline{z}$ and weights $\ul{\rho}$ arises from the family of conformal maps in the same way as ordinary $\SLE_\kappa$ by solving~\eqref{eq:LDE} with $W_t$ given by the solution to the SDE
\begin{align*}
    dW_t &= \sqrt{\kappa}dB_t + \sum_{j=1}^n \re\!\left( \frac{\rho_j}{W_t - V_t^j} \right)dt,\quad
    dV_t^j =  \frac{2}{V_t^j - W_t} dt, \quad V_0^j = z_j.
\end{align*}
The existence of the curve corresponding to an $\SLE_\kappa(\ul{\rho})$ process was proved in \cite{ms2016ig1} up until the \emph{continuation threshold} which is
\[ \inf\left\{ t \geq 0 : \sum_{i : V_t^i = W_t} \rho_i \leq -2 \right\},\]
i.e., the first time $t$ that the sum of the weights of the force points which collide with $W$ at time $t$ is at most $-2$.  The continuity of the $\SLE_\kappa(\rho)$ processes (with a single force point) with $\rho < -2$ was proved in \cite{ms2019lightcones,msw2017clepercolation}.  When they are not interacting with their force points, $\SLE_\kappa(\underline{\rho})$ processes locally look like ordinary $\SLE_\kappa$ processes and in this case can be obtained by weighting the law of an ordinary $\SLE_\kappa$ by a certain martingale; see \cite[Theorem~6]{sw2005coordinate}. For more on $\SLE_\kappa(\underline{\rho})$ processes see \cite{ms2016ig1}.

\subsection{Natural parameterization of $\SLE$}\label{sec:preliminaries_natural_parameterization}

Implicit in the definition of $\SLE_\kappa$ using the chordal Loewner equation~\eqref{eq:LDE} is a notion of time called the capacity parameterization.  That is, $\hcap(K_t) = 2t$ for all $t \geq 0$ where $\hcap$ denotes the half-plane capacity.  This parameterization of time is convenient in the context of the Loewner equation but there are other time parameterizations for $\SLE_\kappa$ which are useful to study for other reasons.  One such example is the \emph{natural parameterization} of $\SLE_\kappa$, which was first considered in~\cite{ls2011natural}.  The natural time parameterization is the one which conjecturally corresponds to the scaling limit of an interface for a discrete model where the discrete curve is parameterized by the number of edges which it traverses.  (So far this has only been proved in the case of the convergence of the loop-erased random walk~\cite{lv2021natural}.)   The natural parameterization for $\SLE_\kappa$ was constructed in~\cite{ls2011natural} for $\kappa < 4(7 - \sqrt{33})$ and later in~\cite{lz2013natural} for all $\kappa < 8$.  For $\kappa \geq 8$, the natural parameterization corresponds to parameterizing the curve by Lebesgue measure as it is space-filling.  The construction of the natural parameterization in~\cite{ls2011natural,lz2013natural} is indirect.  A direct construction was first given in~\cite{lr2015minkowski}, in which it is shown that it is (a constant times) the $d=(1+\kappa/8)$-dimensional Minkowski content of $\eta \sim \SLE_\kappa$.  That is, let $\CM(D) \coloneqq \cont_d(\eta \cap D)$, where
\begin{align*}
    \cont_d(A) = \lim_{r \to 0} r^{d-2} \area(\{z \in \UH: \dist(z,A) < r \}).
\end{align*}
If $\eta$ is has the natural parameterization, then $\CM(\eta([s,t])) = t-s$ for all $0 \leq s \leq t$. Let $(f_t)_{t\geq 0}$ be the centered Loewner chain for $\eta$, i.e., $f_t(z) = g_t(z) - \sqrt{\kappa} B_t$ where $\sqrt{\kappa} B_t$ is the driving function of $\eta$.  If $D \subseteq \UH$ is a subdomain with piecewise smooth boundary, then $\CM$ satisfies in addition the following.
\begin{align*}
    &\E[\CM(D)] = \int_D G_\kappa(z) dz, \quad \E[\CM(D)^2] = \int_D \int_D G_\kappa(z,w) dz dw, \\
    &\E[ \CM(D) \, | \, \eta|_{[0,t]} ] = \CM|_{\eta([0,t])}(D) + \int_D |f_t'(z)|^{2-d} G_\kappa(f_t(z)) dz,
\end{align*}
where $G_\kappa(z) = c \sin^{\frac{8}{\kappa}-1}(\arg z) \im(z)^{d-2}$ for some $c = c_\kappa>0$ and
\begin{align*}
    G_\kappa(z,w) = \lim_{r \to 0} r^{2(d-2)} \p[\dist(z,\eta) \leq r, \dist(w,\eta) \leq r].
\end{align*}
The natural parameterization of $\SLE_\kappa$ is characterized by the property that it is conformally covariant and respects the domain Markov property which we will now explain.  We write $\psi_t = f_t^{-1}$ and note that the map $f_t$ ``unzips'' $t$ units of time from the curve $\eta$, while $\psi_t$ ``zips up'' the same amount of time.  We define the unzipped curves by
\begin{align}
\label{eq:unzipped_curve}
    \eta^t(u) = f_t(\eta(t+u)) \quad\text{for}\quad u \geq 0,
\end{align}
and write $\eta^0 = \eta$.  For each $t \geq 0$, let
\begin{align}
\label{eq:confcovSLE}
    \mu^t(dz) = |\psi_t'(z)|^{-d} \mu \circ \psi_t(dz).
\end{align}
The following was proven (but formulated a bit differently) in~\cite{ls2011natural}.
\begin{thm}
\label{thm:SLE_unique}
Fix $\kappa \in (0,8)$, set $d = 1+\kappa/8$, and let $\eta$ be an $\SLE_\kappa$ process and let $\mu$ be an a.s.\ locally finite measure on $\eta$.  If $(\eta^t,\mu^t) \stackrel{d}{=} (\eta,\mu)$ for each $t \geq 0$ then there exists a constant $C > 0$ so that $\mu$ is $C$ times the natural parameterization of $\eta$.
\end{thm}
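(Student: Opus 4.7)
The plan is to show that $\mu = C \CM$ where $\CM$ denotes the Minkowski content (natural parameterization) of $\eta$. The strategy is to identify the conditional intensity $\E[\mu(\cdot) \giv \eta|_{[0,t]}]$ with $C$ times that of $\CM$, and then pass to the limit $t \to \infty$ via martingale convergence.

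First, I would identify the (unconditional) intensity measure $\E[\mu]$. Setting $\phi(z)\,dz = \E[\mu](dz)$, the invariance at $t=0$ together with scale invariance of $\SLE_\kappa$ under $z \mapsto \lambda z$ and translation invariance along $\R$ forces $\phi(z) = h(\arg z) \im(z)^{d-2}$ for some function $h \colon (0,\pi) \to [0,\infty)$. The hypothesis $(\eta^t,\mu^t) \stackrel{d}{=} (\eta,\mu)$ for every $t \ge 0$ then implies that $|f_t'(z)|^{2-d}\phi(f_t(z))$ is a local martingale in $t$ for each fixed $z \in \UH$; applying It\^o's formula using the Loewner equation~\eqref{eq:LDE} yields a second-order linear ODE for $h$, whose physical solution (selected by the boundary asymptotics coming from $\mu$ being supported on $\eta$ and locally finite) is $h(\theta) = c \sin^{8/\kappa - 1}(\theta)$. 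Hence $\E[\mu] = C\,\E[\CM]$ for some $C \geq 0$; if $C = 0$ then $\mu \equiv 0$ a.s.\ and we are done, so assume $C > 0$.

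Next, I would derive the formula for the conditional intensity. The hypothesis $(\eta^t,\mu^t) \stackrel{d}{=} (\eta,\mu)$ combined with the domain Markov property of $\SLE_\kappa$ says that, conditionally on $\eta|_{[0,t]}$, the pair $(\eta^t, \mu^t)$ is distributed as $(\eta,\mu)$; pushing forward by $\psi_t = f_t^{-1}$ and applying the first step yields
\begin{align*}
\E\!\left[\mu(D) \giv \eta|_{[0,t]}\right] = \mu\!\left(D \cap \eta([0,t])\right) + C \int_{D \setminus K_t} |f_t'(z)|^{2-d}\,G_\kappa(f_t(z))\,dz,
\end{align*}
which is exactly $C$ times the expression for $\E[\CM(D) \giv \eta|_{[0,t]}]$ recorded in the preliminaries. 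Letting $t \to \infty$ and applying L\'evy's upward martingale convergence theorem to both sides (using local finiteness of $\mu$ and $\CM$) identifies $\E[\mu(D) \giv \eta] = C\,\CM(D)$ a.s. A second-moment computation --- applying the same intensity analysis to $\mu \otimes \mu$ and checking that the $\sigma(\eta)$-conditional variance of $\mu(D) - C\,\CM(D)$ vanishes --- then upgrades this to $\mu(D) = C\,\CM(D)$ a.s., and a monotone class argument promotes the pointwise (in $D$) identity to equality of random measures.

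The main obstacle is the first step, namely characterizing $\phi$ from the invariance hypothesis alone. The scaling and translation symmetries only constrain $\phi$ up to the one-variable function $h$, and pinning down $h \propto \sin^{8/\kappa - 1}$ requires the full unzipping invariance via a delicate It\^o argument together with boundary asymptotics of $\mu$ near $\R$ and along $\eta$. A secondary difficulty is the final upgrade from $\E[\mu \giv \eta] = C\,\CM$ to $\mu = C\,\CM$; this requires controlling the second moment of $\mu$, which is not directly given by the hypotheses and must be extracted from the locally-finite assumption together with the invariance applied to the product measure $\mu \otimes \mu$.
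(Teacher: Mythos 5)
First, a point of orientation: the paper does not prove Theorem~\ref{thm:SLE_unique} at all --- it quotes it from \cite{ls2011natural} --- so the relevant comparison is with Lawler--Sheffield's argument and with the intensity computations the paper performs in Section~\ref{sec:npSLE} when verifying the hypotheses for its LQG-built measure. Measured against that, your sketch has a genuine gap at the decisive step. Your conditional-intensity identity reads $\E[\mu(D) \giv \eta|_{[0,t]}] = \mu(D \cap \eta([0,t])) + C\int_{D}|f_t'(z)|^{2-d}G_\kappa(f_t(z))\,dz$, and the first term on the right is $\mu$'s own mass on the explored curve, \emph{not} $C$ times $\CM$'s; so it is not ``exactly $C$ times'' the displayed formula for $\E[\CM(D)\giv\eta|_{[0,t]}]$ unless one already knows $\mu|_{\eta([0,t])} = C\,\CM|_{\eta([0,t])}$, which is the statement to be proved. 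Consequently the $t\to\infty$ limit is vacuous: the left side converges to $\E[\mu(D)\giv\eta]=\mu(D)$ (since $\mu$ is determined by $\eta$), the first right-hand term increases to $\mu(D)$, and all you learn is that the integral term tends to $0$; nothing identifies $\mu$ with $C\CM$. The mechanism that actually closes the argument in \cite{ls2011natural} is different: subtracting the analogous identity for $C\CM$ shows that $M_t \coloneqq \mu(D\cap\eta([0,t])) - C\,\CM(D\cap\eta([0,t]))$ is a martingale started at $0$ whose paths have finite variation (a difference of increasing processes), and, after the requisite continuity and integrability are established, such a martingale is identically zero; letting $t\to\infty$ then gives $\mu(D)=C\,\CM(D)$. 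Your proposed substitute --- a second-moment computation via $\mu\otimes\mu$ showing the conditional variance of $\mu(D)-C\CM(D)$ vanishes --- is not available from the hypotheses: only a.s.\ local finiteness is assumed, so $\E[\mu(D)^2]$ may be infinite, and even granting finiteness, pinning the two-point intensity down to $C^2G_\kappa(z,w)$ is a substantial analysis (of the difficulty of \cite{lz2013natural,lr2015minkowski}), not a routine repetition of the one-point argument. You also implicitly use that $\mu(D\cap\eta([0,t]))$ is measurable with respect to $\eta|_{[0,t]}$; this adaptedness is not part of the stated hypothesis and needs justification.

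There are also problems in your first step, though they are more repairable. Translation invariance along $\R$ is not a symmetry of chordal $\SLE_\kappa$ from $0$ to $\infty$ (and would in fact contradict the angular dependence you want); what is used is scaling homogeneity of the intensity together with absolute continuity with respect to Lebesgue measure, and the latter is a genuine step (the paper proves the analogue for its constructed measure in Lemma~\ref{lem:intensity_SLE_continuity} via a reverse Loewner flow argument), as is local finiteness of $\E[\mu]$, which is likewise not among the hypotheses. Moreover, whether scaling covariance of $\E[\mu]$ follows from the unzipping hypothesis alone is something you assert rather than derive (in the paper it is obtained separately, from the LQG construction, in~\eqref{eq:conformal_covariance_intensity_scalar}). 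Your It\^o/ODE identification of $H\propto\sin^{8/\kappa-1}$ is in the right spirit and parallels Proposition~\ref{prop:density_intensity_SLE}, but note that the paper must first establish continuity and smoothness of $H$ before It\^o's formula can be applied, and the solution is selected by the symmetry condition $H'(\pi/2)=0$ rather than by boundary asymptotics of $\mu$. In short, your outline reproduces (with the caveats above) the intensity identification that the paper carries out for its specific measure, but it omits the Doob--Meyer/finite-variation martingale argument that is the actual content of the uniqueness theorem you were asked to prove.
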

We say that a measure $\mu$ determined by $\eta$ is conformally covariant with exponent $d$ and respects the domain Markov property if the hypothesis given in Theorem~\ref{thm:SLE_unique} holds.

As mentioned in the introduction, the natural parameterization of $\SLE_\kappa$, $\kappa < 4$ can also be constructed via Gaussian multiplicative chaos measures; see~\cite{benoist2018natural}. In Section~\ref{sec:npSLE}, we shall do the same but in the case of self-intersecting $\SLE_\kappa$, that is, for $\kappa \in (4,8)$.

\subsection{Conformal loop ensembles}
\label{sec:CLE}

As mentioned earlier, conformal loop ensembles ($\CLE_\kappa$, $\kappa \in (8/3,8)$) are a one-parameter family of collections of loops that locally look like $\SLE$ curves, constructed in \cite{sheffield2009exploration,sw2012cle}. $\SLE_\kappa$ arises naturally when studying the scaling limit of a single interface in a statistical mechanics model with certain boundary conditions while $\CLE_\kappa$ arises as the scaling limit of all the interfaces simultaneously.

\begin{figure}[h!]
	\centering
		\includegraphics[width=0.32\textwidth]{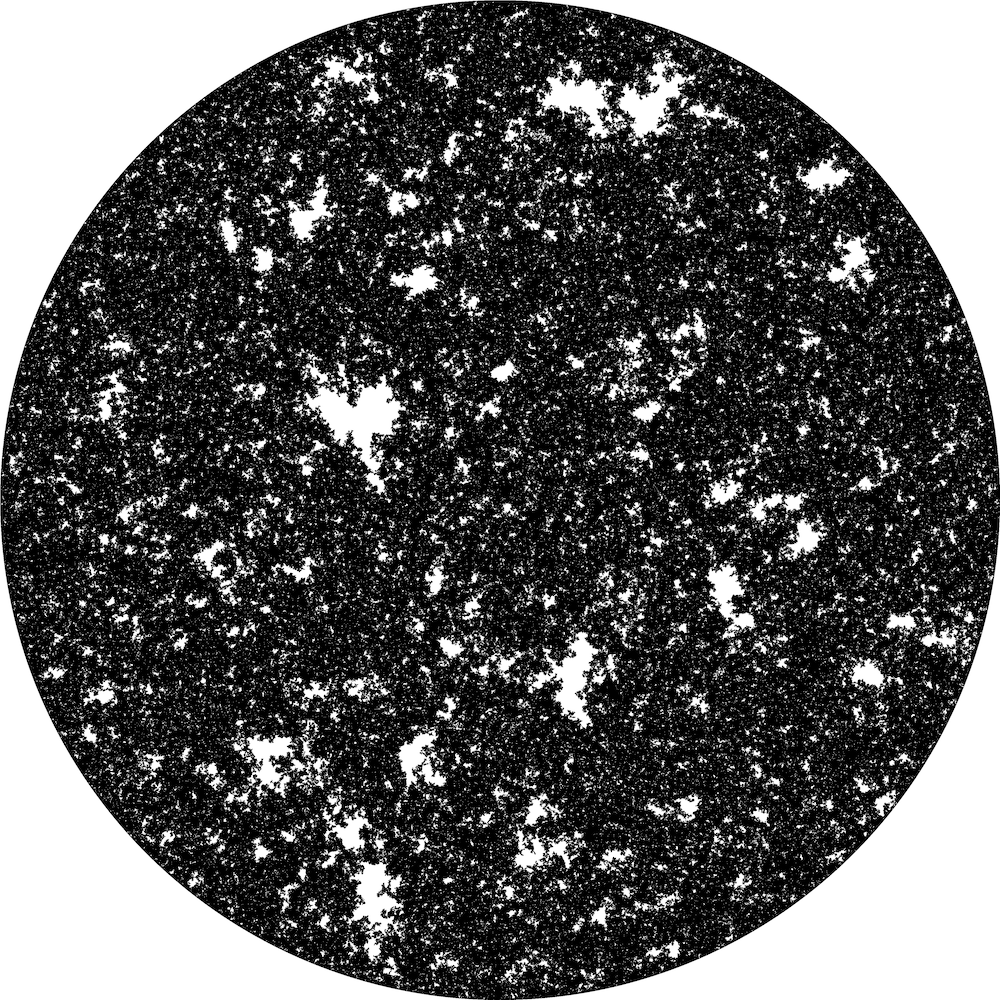} \hspace{0.01\textwidth}\includegraphics[width=0.32\textwidth]{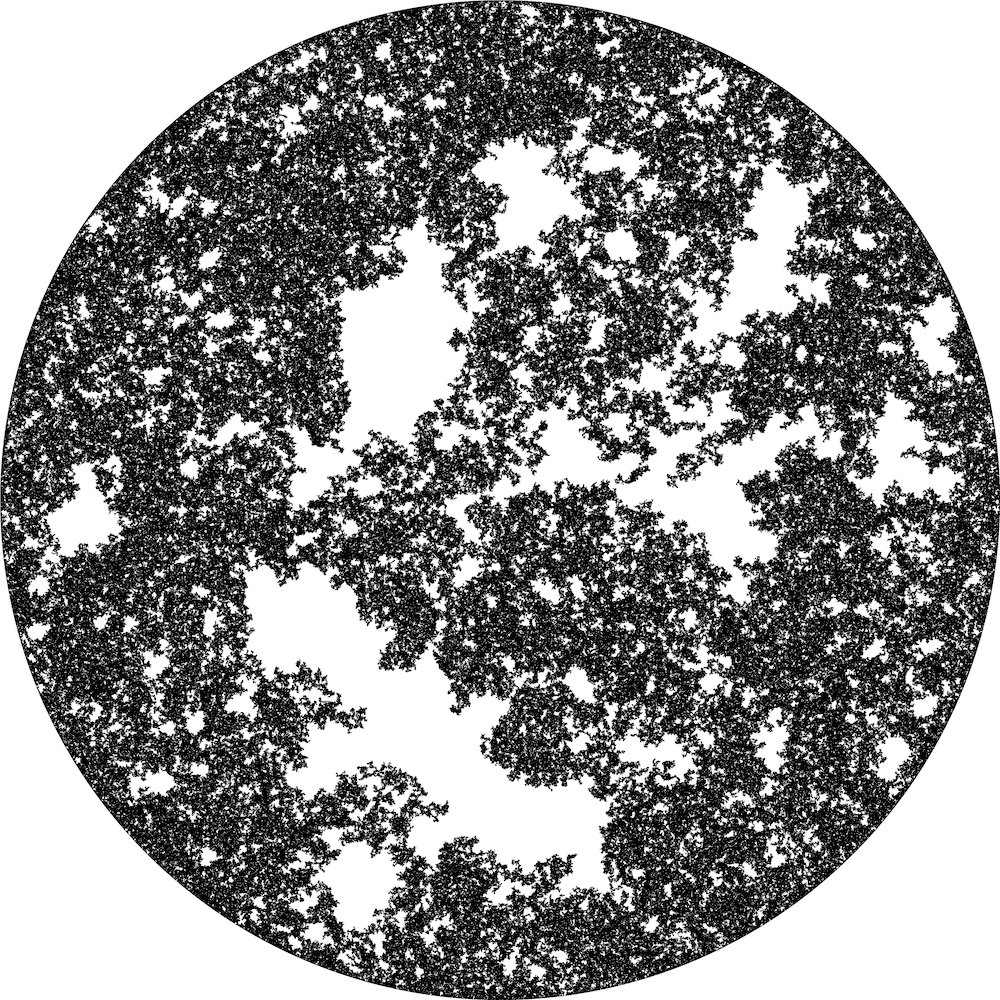}\hspace{0.01\textwidth}\includegraphics[width=0.32\textwidth]{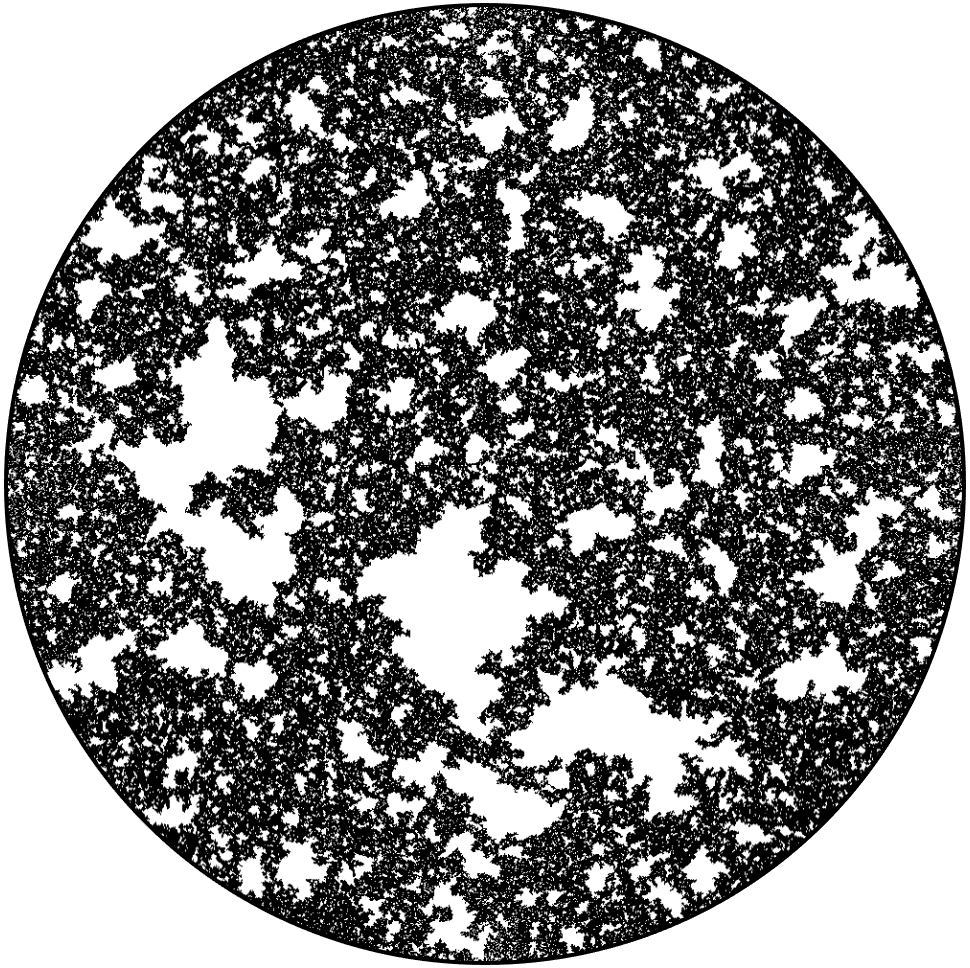}
		\caption{{\bf Left:} $\CLE_3$.  {\bf Middle:} $\CLE_4$. {\bf Right:} $\CLE_6$. The left (resp.\ middle) figure was simulated by sampling a random walk loop-soup with central charge $\tfrac{1}{2}$ (resp.\ $1$) and taking the outermost clusters. It was proved in~\cite{lupu2019convergence} that these outer boundaries converge to the $\CLE_\kappa$ for $\kappa = 3$ (resp.\ $\kappa = 4$). The right figure was simulated by sampling a critical bond percolation in the square lattice and then conformally mapping it to $\D$. This is conjectured to converge to $\CLE_6$ (the corresponding statement was proved for percolation on the triangular lattice, see~\cite{smirnov2001conformal,cn2006fullpercolation}).}
		\label{fig:CLE_4}
\end{figure}

Fix $\kappa \in (8/3,8)$.  For each simply connected domain $D \subseteq \C$, let $\p_D^{\CLE_\kappa}$ denote the law of a~$\CLE_\kappa$ in~$D$.  Then $\p_D^{\CLE_\kappa}$ satisfies the following properties.
\begin{itemize}
    \item \textbf{Conformal invariance:} If $D$ and $D'$ are simply connected domains and $\varphi: D \rightarrow D'$, then the pushforward of $\p_D^{\CLE_\kappa}$ under $\varphi$ is $\p_{D'}^{\CLE_\kappa}$.
    \item \textbf{Conformal restriction:} If $\Gamma \sim \p_D^{\CLE_\kappa}$, $\widetilde{D} \subseteq D$ is simply connected and $\widetilde{D}^\Gamma$ is the random set obtained by removing from $\widetilde{D}$ the loops in $\Gamma$ and their interiors (the points enclosed by the loops) that do not stay in $\widetilde{D}$, then the law of the loops that stay in $\widetilde{D}$, given $\widetilde{D}^\Gamma$, is the product of the laws $\p_{\widetilde{D}_j^\Gamma}^{\CLE_\kappa}$ where $\widetilde{D}_j^\Gamma$ are the connected components of $\widetilde{D}^\Gamma$.
    \item \textbf{Local finiteness:} We have $\p_{\D}^{\CLE_\kappa}$-a.s.\ that for each $\epsilon > 0$ there exists only finitely many loops of diameter greater than $\epsilon$.
\end{itemize}
In the case that $\kappa \in (8/3,4]$, $\p_D^{\CLE_\kappa}$ is characterized by these properties \cite{sw2012cle} (it is expected that there is a similar characterization in the case $\kappa \in (4,8)$ but this has not been proved; see the open question at the end of \cite{sw2012cle}).  That $\CLE_\kappa$ has these properties for $\kappa \in (8/3,4]$ was first proved in \cite{sw2012cle} using the loop-soup construction.  These properties were later proved in this case as a consequence of the iterated boundary conformal loop ensemble construction introduced in \cite{msw2017clepercolation}, the reversibility results established in \cite{ms2016ig2,ms2016ig3}, and the continuity of space-filling $\SLE_\kappa$ \cite{ms2017ig4}.  In the case that $\kappa \in (4,8)$, the conformal invariance of $\CLE_\kappa$ was proved in \cite{sheffield2009exploration} conditionally on the reversibility of $\SLE_\kappa$ for $\kappa \in (4,8)$ which was later proved in \cite{ms2016ig3}.  Finally, The conformal restriction property for $\CLE_\kappa$ in this case was carefully explained in \cite{gmq2021inversion} the local finiteness was proved in \cite{ms2017ig4} as a consequence of the continuity of space-filling $\SLE_\kappa$.

As mentioned in Section~\ref{sec:intro}, the set of points not surrounded by any loop is a random fractal, called the \emph{carpet} if $\kappa \in (8/3,4]$ or \emph{gasket} if $\kappa \in (4,8)$, and has almost sure Hausdorff dimension given by~\eqref{eq:CLEdim}.  In the case that $\kappa \in (8/3,4]$, $\CLE_\kappa$ admits two constructions.  The first is via the Brownian loop-soup and the second is using the so-called $\CLE_\kappa$ exploration tree \cite{sheffield2009exploration}.  The equivalence of these constructions was proved in \cite{sw2012cle}.  In the case that $\kappa \in (4,8)$, the only known construction of $\CLE_\kappa$ is via the exploration tree.  In what follows, we will review the loop-soup construction.

\subsubsection{Loop-soup construction}
\label{sec:loop-soup}

Let $\mu^\sharp(z,w;t)$ denote the law of the Brownian bridge in $\C$ from $z$ to $w$ in time $t$. Then the (unrooted) Brownian loop measure in $\C$ is defined as
\begin{align*}
    \mu_{\C}^{\textup{loop}} = \int_{\C} \int_0^\infty \frac{1}{2\pi t^2} \mu^\sharp(z,z;t) dt dz.
\end{align*}
The Brownian loop measure in $D \subseteq \C$, denoted $\mu_D^{\textup{loop}}$, is defined as the restriction of $\mu_{\C}^{\textup{loop}}$ to the loops that are contained in $D$. The Brownian loop-soup in $D\subseteq \C$ with intensity $c > 0$ is defined as a Poisson point process with intensity given by $c\mu_D^{\textup{loop}}$. The Brownian loop-soup is conformally invariant, that is, the conformal image of a Brownian loop-soup is a Brownian loop-soup in the new domain, with the same intensity. For more on the Brownian loop-soup, see \cite{lw2004loopsoup}.

Let $\wc{\Gamma} = (\CL_j)_{j\in J}$ be a Brownian loop-soup with intensity $c \in (0,1]$. Let $\Gamma$ be the collection of outer boundaries of the clusters of $\wc{\Gamma}$. By \cite{sw2012cle}, $\Gamma$ is a $\CLE_\kappa$ where $\kappa \in (8/3,4]$ is determined by the relation
\begin{align*}
    c = \frac{(3\kappa-8)(6-\kappa)}{2\kappa}.
\end{align*}
This construction gives a natural coupling of $\CLE_\kappa$'s with $\kappa \in (8/3,4]$ induced by the Poisson point process. More precisely, we can couple loop-soups $(\wc{\Gamma}_c)_{c\in(0,1]}$ such that $\wc{\Gamma}_{c_1} \subseteq \wc{\Gamma}_{c_2}$ whenever $c_1 < c_2$. This in turn gives a monotone coupling of $\CLE_\kappa$ which we will use in our construction of the natural measure on $\CLE_4$. This is useful, as the natural LQG measure on $\CLE$, which we will use in our construction, has not previously been defined for $\kappa = 4$.

\subsection{Gaussian free field}

We now introduce the Gaussian free field (GFF). For more details, see \cite{sheffield2007gff,sheffield2016zipper}. Let $D$ be a Jordan domain and denote by  $C_0^\infty(D)$ the smooth functions on $D$ with compact support. We denote by $H_0(D)$ the Hilbert space closure of $C_0^\infty(D)$ with respect to the Dirichlet inner product
\begin{align*}
    (f,g)_\nabla = \frac{1}{2\pi} \int_D \nabla f(z) \cdot \nabla g(z) dz.
\end{align*}

The \emph{zero-boundary GFF} $h$ on $D$ is given by
\begin{align*}
    h = \sum_{n=1}^\infty \alpha_n \phi_n
\end{align*}
where $(\phi_n)_{n=1}^\infty$ is a $(\cdot,\cdot)_\nabla$-orthonormal basis of $H_0(D)$ and $(\alpha_n)_{n=1}^\infty$ is an i.i.d.\ sequence of $N(0,1)$ random variables. Clearly, $h$ is not a function, however, the partial sums converge in the Sobolev space $H^{-\epsilon}(D)$ for every $\epsilon > 0$. The law of $h$ is independent of the choice of orthonormal basis. It follows from the conformal invariance of the Dirichlet inner product that the law of $h$ is conformally invariant as well. That is, if $\varphi: D \rightarrow \widetilde{D}$ is a conformal map, then
\begin{align*}
    \wt{h} \coloneqq h \circ \varphi^{-1} = \sum_{n=1}^\infty \alpha_n \phi_n \circ \varphi^{-1}
\end{align*}
is a zero-boundary GFF in $\widetilde{D}$.

Let $U \subseteq D$ be open. Integrating by parts, it is easy to see that $H_0(D)$ admits the $(\cdot,\cdot)_\nabla$-decomposition $H_0(D) = H_0(U) \oplus H_0^\perp(U)$ where $H_0^\perp(U) = H_{0,D}^\perp(U)$ is the set of functions in $H_0(D)$ which are harmonic on $U$. Hence, we get a decomposition of $h$ into $h = h_U + h_U^\perp$, with $h_U$ and $h_U^\perp$ independent, where $h_U$ is a zero-boundary GFF in $U$ and $h_U^\perp$ is a distribution which agrees with $h$ on $D \setminus U$ and is harmonic on $U$. In particular, $h_U^\perp$ can be thought of as the harmonic extension to $U$ of the values of $h$ on $\partial U$. This is the Markov property of the GFF.

A GFF with more general boundary data, say $f$, is defined as $h+F$ where $h$ is a zero-boundary GFF and $F$ is the harmonic extension of the values of $f$ to the domain.

Equivalently, one can define the zero-boundary GFF $h$ as the centered Gaussian process, $h: H_0(D) \rightarrow L^2(\p)$, with the Green's function as its correlation kernel. That is, $(h,f), f \in H_0(D)$ is a collection of zero mean Gaussian random variables with correlations given by
\begin{align*}
	\E[(h,f)(h,g)] = \int_{D \times D} f(z) G(z,w) g(w) dz dw,
\end{align*}
where $G=G_D$ is the Green's function for $D$ with Dirichlet boundary data. The conformal invariance of $h$ follows from the conformal invariance of Green's functions.

We will next give the definition of the free boundary GFF. Let $H(D)$ be the Hilbert space closure with respect to $(\cdot,\cdot)_\nabla$ of the space of functions $f \in C^\infty(D)$ such that $\int_D f dz = 0$. Then the \emph{free boundary GFF} is defined by the sum
\begin{align*}
    h = \sum_{n=1}^\infty \alpha_n \psi_n,
\end{align*}
where $(\psi_n)_{n=1}^\infty$ is a $(\cdot,\cdot)_\nabla$-orthonormal basis of $H(D)$ and $(\alpha_n)_{n=1}^\infty$ is an i.i.d.\ sequence of $N(0,1)$ random variables. Just as in the zero-boundary case, the free boundary GFF is conformally invariant. We note that as the free boundary GFF is only defined on test functions of zero mean, it is actually not canonically defined in a space of distributions, but rather in a space of distributions modulo additive constant.

We may decompose a free boundary GFF $h$ on $D$ as the sum of a zero-boundary GFF on $D$ and the harmonic extension of the values of $h$ on $\partial D$ to $D$. We note that this harmonic extension will be a random harmonic function. Consequently, the zero-boundary GFF and the free boundary GFF are mutually absolutely continuous away from the boundary.

\begin{rmk}[Radial/lateral decomposition]\label{rmk:radlat}
Let $H^R(\UH)$ and $H^L(\UH)$ be the subspaces of functions in $H(\UH)$ which are constant on and have mean zero on each semi-circle centered at zero, respectively. Then $H(\UH) = H^R(\UH) \oplus H^L(\UH)$. The projection of a free boundary GFF $h$ onto $H^R(\UH)$ is the function $h^R=h_{|\cdot|}(0)$ which on each semi-circle centered at $0$ takes the average value of the field $h$ on that semi-circle. The projection of $h$ onto $H^L(\UH)$ is given by $h^L = h-h_{|\cdot|}(0)$. We call $h^R$ and $h^L$ the radial and lateral parts of $h$, respectively. Clearly, $h^R$ and $h^L$ are independent. Note that while~$h^R$ is only defined modulo additive constant, $h^L$ is actually well-defined.  There is also an analogous radial/lateral decomposition for $H(\SSS)$ as the orthogonal sum of the spaces $H^R(\SSS)$, $H^L(\SSS)$ where the former consists of those functions in $H(\SSS)$ which are constant on lines of the form $t + (0,i\pi)$ and the latter consists of those functions in $H(\SSS)$ which have mean zero on such lines.
\end{rmk}

Finally, we mention the notion of a local set. In practice, this is just a random set for which a domain Markov property holds. More precisely, if $h$ is a GFF on $D$, then we say that random closed subset $A$ of $\overline{D}$ is a local set for $h$ if there is a coupling of $A$, $h$ and a field $h_A$ such that
\begin{itemize}
    \item $h_A$ can be represented as a harmonic function on $D \setminus A$,
    \item Conditionally on $(h,A)$, $h^A \coloneqq h - h_A$ is a GFF in $D \setminus A$ with zero boundary conditions.
\end{itemize}
In fact, $h_A$ can be seen to be a deterministic function of $h$ and $A$, so the coupling concerns only $h$ and $A$.

\subsection{Liouville quantum gravity}
\label{sec:LQG}
Fix $\gamma \in (0,2)$.  Informally, a Liouville quantum gravity (LQG) surface is a random two-dimensional Riemannian manifold parameterized by a domain $D$ with metric tensor given by
\begin{align*}
    e^{\gamma h(z)} dx \otimes dy
\end{align*}
where $h$ is some form of the GFF on $D$.  Of course, this does not make literal sense, since the exponential of a distribution is not well-defined. Instead, one has to define LQG surfaces via renormalization.

For $z \in D$ and $0 < \epsilon < \dist(z,\partial D)$, we let $h_\epsilon(z)$ denote the average value of $h$ on $\partial B(z,\epsilon)$. Then the $\gamma$-LQG area measure on $D$ is a random measure which is defined as the weak limit
\begin{align*}
    \mu_h(dz) = \lim_{\epsilon \rightarrow 0} \epsilon^{\gamma^2/2} e^{\gamma h_\epsilon(z)} dz
\end{align*}
where $dz$ denotes Lebesgue measure.

If $h$ is a free boundary GFF (or any Gaussian field which is locally absolutely continuous with respect to a free boundary GFF plus a continuous function), then it is also possible to define a boundary length measure. For $x \in \partial D$, let $h_\epsilon(x)$ be the average value of $h$ on $\partial B(x,\epsilon) \cap D$. Then, on a linear segment of $\partial D$, we define the quantum length measure as
\begin{align*}
    \nu_h(dx) = \lim_{\epsilon \rightarrow 0} \epsilon^{\gamma^2/4} e^{\frac{\gamma}{2} h_\epsilon(x)} dx,
\end{align*}
where $dx$ is Lebesgue measure on $\partial D$.

It is also possible to construct a metric for LQG \cite{ms2020lqgtbm1,ms2021lqgtbm2,gm2021metric,dddf2020tightness}, though we will not need this in the present paper.

Let $\widetilde{D}$ be a simply connected domain, $\psi: \widetilde{D} \rightarrow D$ a conformal map and write
\begin{align*}
    \wt{h} = h \circ \psi + Q\log|\psi'|, \quad Q = \frac{2}{\gamma} + \frac{\gamma}{2}.
\end{align*}
Then $\mu_h$ is the pushforward of the measure $\mu_{\wt{h}}$ under $\psi$. That is, for each $A \subseteq \widetilde{D}$, $\mu_{\wt{h}}(A) = \mu_h(\psi(A))$ holds a.s. Similarly, $\nu_h$ is the pushforward of $\nu_{\wt{h}}$ under $\psi$. Thus, the random surface does not depend on which domain $D$ we parameterize it by and hence a quantum surface is defined to be an equivalence class of pairs $(D,h)$ consisting of a simply connected domain and a distribution under the relation transformation
\begin{align}
\label{eq:QS}
    (D,h) \mapsto (\psi^{-1}(D), h \circ \psi + Q\log|\psi'|).
\end{align}
Note that this provides a definition of $\nu_h$ even for boundary arcs which are not linear: one just maps the domain to $\UH$ (for example) and measures the length there.

The measures $\nu_h$ can actually measure the length of some curves inside of the domain as well. This is done by mapping the curve to the boundary and measuring the length of the boundary segment that is the image of the curve. That is, if $\eta$ is a curve in $\UH$ and $g: \UH \setminus \eta \rightarrow \UH$ is a conformal map which extends continuously to $\eta$ (in the sense of prime ends), then the quantum length of the left (resp. right) side of $\eta$, $\eta_L$ (resp. $\eta_R$), is defined as the measure of $g(\eta_L)$ (resp. $g(\eta_R)$) with respect to $\nu_{\bar{h}}$, where 
\begin{align*}
    \bar{h} = h \circ g + Q\log|g'|.
\end{align*}
In \cite{sheffield2016zipper}, it is proven that if $\kappa \in (0,4)$, then the length of the left and right sides of an independent $\SLE_\kappa$ curve drawn on a quantum surface called a $(\gamma - 2\gamma^{-1})$-quantum wedge (corresponding to a weight of $4$), $\gamma = \sqrt{\kappa}$, can be made sense of and that they are equal at any given time. In \cite{dms2014mating}, the authors prove that the quantum length of the left and right sides of an $\SLE_{\kappa'}$ hull, $\kappa' \in (4,8)$, drawn on an independent $(\tfrac{4}{\gamma} - \tfrac{\gamma}{2})$-quantum wedge (corresponding to a weight of $3\gamma^2/2-2$), $\gamma = 4/\sqrt{\kappa'}$, are given by independent $\kappa'/4$-stable L\'{e}vy processes with only negative jumps provided the curve has the appropriate time parameterization.  The negative jumps correspond to the components that the curve disconnects from $\infty$ and the size of a downward jump gives the quantum length of the component.  We will refer to this time parameterization as \emph{generalized quantum length}.

Since the time parameterization of a stable L\'evy process can be recovered from its ordered collection of jumps, we can recover the generalized quantum length from the ordered collection of quantum lengths of the components disconnected from $\infty$ by the curve.  More concretely, let $N_\epsilon^{\eta,h}(dz)$ denote the counting measure on bubbles cut out by $\eta$ of LQG boundary length in $[\epsilon,2\epsilon]$, with respect to $h$.  The boundary of each such bubble is absolutely continuous with respect to an $\SLE_\kappa$ for $\kappa = 16/\kappa' \in (0,4)$ and therefore it has a well-defined quantum length. The generalized quantum length of $\eta$ is the random measure defined by
\begin{align*}
    \sigma^{\eta,h}(dz) \coloneqq \lim_{\epsilon \rightarrow 0} \epsilon^{\wh{\alpha}} N_\epsilon^{\eta,h}(dz),
\end{align*}
where $\wh{\alpha} = \kappa'/4 = 4/\gamma^2$. This is a volume measure such that $\sigma^{\eta,h}(\eta([0,t]))$ is continuous and strictly increasing in $t$. Moreover, we write $\sigma_I^{\eta,h}$ and $N_\epsilon^{\eta,h,I}$ for these quantities, restricted to the time interval $I$.

Note that while we require a distribution which looks locally like a free boundary GFF to measure the length of segments of $\partial D$, we can actually measure the length of a curve in $D$ with a zero-boundary GFF as well, since inside of $D$, the law of a zero-boundary GFF is absolutely continuous with respect to that of a free boundary GFF.

\begin{rmk}\label{rmk:qlengthCLE}
We note that quantum length and generalized quantum length of $\SLE$ processes provide us with a notion of length of $\CLE$ loops. For a single loop $\CL$ of a $\CLE_{\kappa'}$, we use the same notation as above, that is, $\sigma^{\CL,h}$ denotes its quantum length with respect to the field $h$, and $N_\epsilon^{\CL,h}$ denotes the counting measure on the smaller loops with sizes in $[\epsilon,2\epsilon]$ that $\CL$ traces (both on the left and on the right).
\end{rmk}

\subsubsection{Weight $3\gamma^2/2-2$ quantum wedge}

In what follows in Section~\ref{sec:npSLE}, it will be important to have the exact definition of a weight $3\gamma^2/2-2$ quantum wedge (i.e., a $(4/\gamma-\gamma/2)$-quantum wedge) so we will review it here.  The definition is  easiest to give when the surface is parameterized by the infinite strip $\SSS = \R \times (0,\pi)$ rather than by $\UH$.  One takes the projection $h$ of the field onto $H^L(\SSS)$ to be given by the corresponding projection of a GFF on $\SSS$ with free boundary conditions.  The projection of the field onto $H^R(\SSS)$ is taken independently to be the function whose common value on $\{t\} \times (0,\pi)$ is given by $Y_t = B_{2t} + (\gamma - 2/\gamma)t$ where $B$ is a two-sided Brownian motion with $B_0 = 0$ and conditioned so that $Y_t > 0$ for all $t > 0$.  Since a quantum surface is defined modulo~\eqref{eq:QS} this in fact defines a particular embedding of a weight $3\gamma^2/2-2$ quantum wedge into $\SSS$ which is sometimes called the circle average embedding.  The circle average embedding of a weight $3\gamma^2/2-2$ wedge parameterized by $\UH$ is defined by starting with the circle average embedding of the surface parameterized by $\SSS$ and then mapping to $\UH$ using the map $z \mapsto e^z$ and applying~\eqref{eq:QS}.

\subsubsection{Ordinary and generalized quantum disks}

Two types of quantum surfaces that are of particular importance are the quantum disk and the generalized quantum disk. The exact definitions are not important for understanding the results, however, we will provide the definition of the quantum disk and mention briefly how to construct the generalized quantum disk.

While we will often let it be parameterized by the unit disk, it is convenient to define it parameterized by the infinite strip $\SSS$, since it then takes a simple form. We can then parameterize it by $\D$ using a conformal map and~\eqref{eq:QS}.

Fix $\gamma \in (0,2)$. The \emph{infinite measure $\mathcal{M}^{\textup{disk}}$ on quantum disks} is a measure on doubly marked quantum surfaces $(\SSS,h^D,-\infty,\infty)$. A quantum disk $(\SSS,h^D,-\infty,\infty)$ can be sampled from $\mathcal{M}^{\textup{disk}}$ as follows. Sample a process $X$ from the infinite excursion measure of a Bessel process of dimension $3-\frac{4}{\gamma^2}$ (described in \cite[Remark~3.7]{dms2014mating}) and let $Z$ be the process defined by taking $\frac{2}{\gamma} \log X$ and reparameterizing it to have quadratic variation $2dt$. Then we define $h^D$ as the distribution with average value $Z_t$ on each vertical line segment $\{t\} \times (0,\pi)$ and lateral part $h^D - Z_{\textup{Re}(\cdot)}$ given by that of an independent free boundary GFF on $\SSS$ (recall Remark~\ref{rmk:radlat}).

For $\ell>0$ the quantum disk with boundary length $\ell$ is is the law on surfaces given by $\mathcal{M}^\textup{disk}$ conditioned on $\nu_{h^D}(\partial \mathcal{S}) = \ell$ (see \cite[Section~4.5]{dms2014mating}).

A generalized quantum disk is constructed by considering a $\frac{4}{\gamma^2}$-stable looptree (see e.g. \cite{ck2014looptrees}) and assigning to each of the loops the conformal structure of an independent $\gamma$-quantum disk with boundary length given by the length of the loop and marked point given by the point on the loop which is closest to the root.

Just as in the case of the free boundary GFF, it holds that inside of the domain, the law the quantum disk -- and hence the generalized quantum disk -- is absolutely continuous with respect to the law of a zero-boundary GFF.

\subsubsection{Natural LQG measure on the $\CLE$ carpet/gasket}\label{sec:nmCLE}
Just as for $\SLE$, there is a natural LQG measure on the $\CLE$ carpet. Fix parameters
\begin{align*}
    \kappa \in (8/3,4), \quad \gamma = \sqrt{\kappa}, \quad \alpha = 4/\kappa,
\end{align*}
and consider a $\CLE_\kappa$ process $\Gamma$ drawn on an independent $\gamma$-quantum disk $h$. For an open set $O$ let $N_\epsilon^{\Gamma,h}(O)$ denote the number of loops which are contained in $O$ and have quantum length in $[\epsilon,2\epsilon]$ with respect to $h$. Then the limit
\begin{align}\label{eq:CLEnm}
    \Lambda^{\Gamma,h}(O) = \lim_{\epsilon \rightarrow 0} \epsilon^{\alpha+\frac{1}{2}} N_\epsilon^{\Gamma,h}(O)
\end{align}
exists in probability for each $O$ and defines a random measure which we interpret as the natural LQG measure on the $\CLE_\kappa$ carpet; see \cite[Theorem~1.3]{msw2020simpleclelqg}.

The case of non-simple $\CLE$ is similar. Suppose $\kappa' \in (4,8)$, $\gamma = 4/\sqrt{\kappa'}$ and $\alpha' = 4/\kappa'$, and $\Gamma$ is a $\CLE_{\kappa'}$ drawn on an independent generalized quantum disk $h$. For any open sen $O$, let $N_\epsilon^{\Gamma,h}(O)$ denote the number of loops of $\Gamma$ which are contained in $O$ and have generalized quantum length in $[\epsilon,2\epsilon]$ with respect to $h$ (recall here that the loops are $\SLE_{\kappa'}$-type loops and hence their lengths are measured accordingly). Then, the natural LQG measure on the $\CLE$ gasket is given by the limit~\eqref{eq:CLEnm} with this $N_\epsilon^{\Gamma,h}(O)$ and $\alpha'$ in place of $\alpha$ (see \cite{msw2020nonsimpleclelqg}).

\section{Natural parameterization of self-intersecting SLE}\label{sec:npSLE}

Fix $\kappa' \in (4,8)$ and let $\gamma = 4/\sqrt{\kappa'}$ and $\wh{\alpha} = \kappa'/4$. Using LQG measures, we want to construct a $(1+\frac{\kappa'}{8})$-dimensional volume measure on $\eta \sim \SLE_{\kappa'}$ which will be (a constant multiple of) the natural parameterization of $\eta$. We stress that by Theorem~\ref{thm:SLE_unique}, we need only show that the constructed measures $(\mu^t)_{t \geq 0}$ satisfy the correct conformal covariance~\eqref{eq:confcovSLE}, have the same law (under proper conditioning) and are locally finite. The results of~\cite{benoist2018natural} lead us to considering a measure of the form
\begin{align*}
    \mu^0(dz) = F(z) \E\!\left[\sigma^{\eta^0,h^0}(dz) \, \middle| \, \eta^0 \right]
\end{align*}
where $\eta^0 = \eta$, $h^0$ is a zero-boundary GFF on $\UH$ independent of $\eta^0$, and $F(z) = r_{\UH}(z)^{-2/\gamma^2}$ where $r_{\UH}(z)$ is the conformal radius of $\UH$ as seen from $z$.  We recall that $(f_t)$ is the centered Loewner flow associated with $\eta$, $\eta^t(u) = f_t(\eta(t+u))$ for each $u \geq 0$, $\psi_t = f_t^{-1}$, and $\mu^t = |\psi_t'(z)|^{-d} (\mu \circ \psi_t)(dz)$.  We define the volume measure $\wt{\mu}^s$ on $\eta^s$ (recall~\eqref{eq:unzipped_curve}) by
\begin{align*}
    \wt{\mu}^s(dz) = F(z) \E\!\left[\sigma^{\eta^s,\wt{h}^s}(dz) \, \middle| \, \eta^s \right]
\end{align*}
where $\wt{h}^s$ is a zero-boundary GFF in $\UH$ independent of $\eta^0$.  It is clear that $(\eta^s,\wt{\mu}^s)$ and $(\eta^0,\mu^0)$ have the same law. The goal of this section is to prove that $\wt{\mu}^s = \mu^s$ (recall~\ref{eq:confcovSLE}) hence deduce that $(\eta^s,\mu^s)$ and $(\eta^0,\mu^0)$ have the same law. In order to do this, we begin by showing that $\mu^0$ is a.s.\ locally finite as a measure on $\UH$. To be explicit, we say that $\mu^0$ is an a.s.\ locally finite as a measure on $\UH$ if we have that a.s.\ $\mu^0(K) < \infty$ for every $K \subseteq \UH$ compact.
Note that this is different from the local finiteness condition of Theorem~\ref{thm:SLE_unique}, but we shall prove that it implies the latter condition, which requires that for all $0<s<t$, we have that $\mu^0(\eta^0([s,t]))$ is a.s.\ finite.

\subsection{Local finiteness of $\mu^0$ as a measure on $\UH$}

The goal of this subsection is to prove the following.
\begin{lem}
\label{lem:locally_finite_SLE}
Almost surely, $\mu^0$ is locally finite as a measure on $\UH$.
\end{lem}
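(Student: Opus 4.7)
The plan is to prove the stronger statement $\E[\mu^0(K)] < \infty$ for every compact $K \subset \UH$; this implies almost sure local finiteness via Markov's inequality applied to a countable exhaustion of $\UH$ by compacts. For such $K$ the factor $F(z) = r_\UH(z)^{-2/\gamma^2}$ is bounded on $K$, so by the tower property
\[
\E[\mu^0(K)] \le \|F\|_{L^\infty(K)} \cdot \E\bigl[\sigma^{\eta^0,h^0}(K)\bigr],
\]
reducing the task to showing that the expected generalized quantum length of $\eta^0 \sim \SLE_{\kappa'}$ inside $K$, measured with respect to the independent zero-boundary GFF $h^0$, is finite.

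To obtain this bound I would cover $K$ by finitely many Euclidean balls $B(z_j, r_j)$ with $\overline{B(z_j, 3r_j)} \subset \UH$ and establish a local bound $\E[\sigma^{\eta^0, h^0}(B(z_j, r_j))] < \infty$ for each. From the definition $\sigma^{\eta, h}(dz) = \lim_{\epsilon \to 0} \epsilon^{\wh\alpha} N_\epsilon^{\eta, h}(dz)$ together with Fatou's lemma, it further suffices to bound $\E[\epsilon^{\wh\alpha} N_\epsilon^{\eta^0, h^0}(B(z_j, r_j))]$ uniformly in $\epsilon$ along some subsequence. Following the strategy of~\cite{benoist2018natural} for $\kappa < 4$, one then compares this to the analogous quantity under a reference coupling where $\eta$ is drawn on top of an independent weight-$3\gamma^2/2-2$ quantum wedge embedded in $\UH$ via the circle-average embedding. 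Under that coupling the boundary lengths of the bubbles disconnected by $\eta$ are, by the mating-of-trees description recalled in Section~\ref{sec:LQG}, precisely the jumps of a $\kappa'/4$-stable L\'{e}vy process run for the generalized quantum natural time; this gives an explicit $\epsilon$-uniform bound on $\E[\epsilon^{\wh\alpha} N_\epsilon^{\eta, h^{\mathrm{wedge}}}(B(z_j, r_j))]$.

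The bridge from the wedge setting to our independent-GFF setting is a Radon--Nikodym argument: on any precompact open $U$ with $B(z_j, 2r_j) \subset U$ and $\overline{U} \subset \UH$, the restrictions of $h^0$ and of the wedge field to $U$ are mutually absolutely continuous with an explicit Gaussian Radon--Nikodym derivative. Combined with H\"older's inequality and $L^p$ moment bounds on $N_\epsilon^{\eta, h^{\mathrm{wedge}}}(B(z_j, r_j))$ under the wedge coupling, this transports the wedge estimate to the independent-GFF setup. The main obstacle is precisely this transport step: one needs moment bounds on the bubble count that are strong enough to absorb the Radon--Nikodym change while remaining uniform in $\epsilon$, and this technical comparison between the independent coupling and the mating-of-trees setting is where the bulk of the work lies. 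Once the local bound is in hand, summing over the finite cover of $K$ yields $\E[\sigma^{\eta^0, h^0}(K)] < \infty$ and hence the lemma.
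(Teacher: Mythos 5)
Your top-level reduction matches the paper's: bound $F$ on a compact $K$, reduce to $\E[\sigma^{\eta^0,h^0}(K)]<\infty$, and use the weight $3\gamma^2/2-2$ wedge (mating-of-trees) coupling as the reference where generalized quantum length is tractable. However, there are two genuine gaps at exactly the points where the real work lies. First, your claim that the mating-of-trees description ``gives an explicit $\epsilon$-uniform bound on $\E[\epsilon^{\wh\alpha} N_\epsilon^{\eta,h^{\mathrm{wedge}}}(B(z_j,r_j))]$'' is not justified and is in fact the heart of the matter. The L\'evy-process description controls the bubble count per unit of generalized quantum natural time; restricted to a fixed Euclidean ball, the expected count is governed by the (random) amount of quantum natural time the curve spends before leaving the ball forever, and finiteness of its expectation is precisely what must be proved. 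The paper devotes Lemma~\ref{lem:rectangle_bound} and Lemma~\ref{lem:locally_finite_disk} to this: it bounds $\E[\tau\giv\Fh]$ (with $\tau$ the last visit time to $\D$ in quantum natural time) by combining (i) moment bounds for the wedge's area measure conditional on its harmonic part, (ii) the $\SLE_{\kappa'}$ boundary-arm estimate controlling the probability that the curve returns to $\D$ after reaching large radius, and (iii) the observation that a long stretch of quantum natural time forces many cut-out disks of boundary length $\geq 1$, hence large quantum area, via Poisson/Hoeffding concentration. None of this is automatic from the stable-process description alone, and your proposal contains no substitute for it.

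Second, your transport step from the wedge field to the independent zero-boundary GFF via a local Radon--Nikodym derivative plus H\"older would require uniform-in-$\epsilon$ $L^p$ bounds ($p>1$) on the bubble count under the wedge coupling, which you do not supply and which would be at least as hard as the $p=1$ bound above; you acknowledge this is ``where the bulk of the work lies'' but leave it open. The paper avoids the issue entirely: it writes $h^w = h^0 + \Fh$ with $h^0$ a zero-boundary GFF \emph{independent} of the harmonic part $\Fh$, works conditionally on $\Fh$ on the event $\{\sup_K|\Fh|\leq C\}$ (which has probability close to $1$), compares $\sigma^{\eta^0,h^0}$ and $\sigma^{\eta^0,h^w}$ on $K$ up to the factor $e^{2C/\gamma}$ using the scaling rule~\eqref{eq:scaling_quantum_length}, and then uses independence to conclude that the conditional expectation $\E[\sigma^{\eta^0,h^0}(K)\giv\Fh]$ is the deterministic quantity $\E[\sigma^{\eta^0,h^0}(K)]$, hence finite. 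A scaling map $z\mapsto z/R$ together with the coordinate-change rule then handles general compacts of $\UH$. So while your outline identifies the right reference coupling, the two steps you treat as routine (the $\epsilon$-uniform expected bubble count in a Euclidean ball, and the transfer to the independent GFF) are precisely the content of the paper's proof and remain unproved in your proposal.
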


This is proved by first proving that if $h^w$ is a weight $3\gamma^2/2-2$ quantum wedge with the circle average embedding, taken to be independent of $\eta^0$, and $h^w = h^0 + \Fh$ is its decomposition into a zero-boundary GFF and a harmonic function, then $\E[\sigma^{\eta^0,h^w}| \Fh](\D \cap \UH) < \infty$ a.s. The result then follows from a comparison between $h^w$ and $h^0$ in $\D \cap \UH$ and a scaling argument. We begin by proving some moment bounds for $\mu_{h^w}$.

\begin{lem}
\label{lem:rectangle_bound}
Fix $\gamma \in (\sqrt{2},2)$ and let $h^w$ be a weight $\tfrac{3\gamma^2}{2} -2$ quantum wedge parameterized by $\SSS$.  We take the embedding so that the projection of $h^w$ onto $H^R(\SSS)$ last hits $0$ on the line $\{0\} \times (0,\pi)$.  For each $k \in \Z$ we let $A_k = [k,k+1]\times(0,\pi)$ and let $h^w = h^0 + \Fh$ be the decomposition of $h^w$ into a zero-boundary GFF $h^0$ and its harmonic part $\Fh$ where $h^0$, $\Fh$ are independent. Then for small enough $p>0$ we have that
\begin{align*}
	\E\!\left[ \E\!\left[ \mu_{h^w}(A_0) \giv \Fh \right]^p \right] < \infty.
\end{align*}
Moreover for $p,\epsilon>0$ small enough and $\wt{p} = p+\epsilon$ there exists a constant $C = C_{p,\epsilon} > 0$ so that for $k \geq 1$ we have that
\begin{align*}
	\E\!\left[ \E[ \mu_{h^w}(A_k) \giv \Fh]^p \right] \leq C e^{k( \wt{p}(\gamma^2-2) + \wt{p}^2 \gamma^2)}.
\end{align*}
Finally, for small enough $p,\epsilon > 0$ and $\wt{p} = p+\epsilon$ there exists a constant $C = C_{p,\epsilon} > 0$ such that for $k \leq 0$ we have that
\begin{align*}
	\E\!\left[ \E[\mu_{h^w}(A_{k-1}) \giv \Fh]^p \right] \leq C e^{-|k| \wt{p}(\gamma^2-2) + \wt{p}^2 \gamma^2 |k|}.
\end{align*}
\end{lem}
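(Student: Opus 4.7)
The plan is to use the decomposition $h^w = h^0 + \Fh$ together with the factorization $\mu_{h^w}(dz) = e^{\gamma \Fh(z)} \mu_{h^0}(dz)$ to reduce the $p$-th moment bound to a log-normal moment on the radial part of the quantum wedge.

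\textbf{Reduction to a deterministic integral.} Since $h^0$ is a zero-boundary GFF on $\SSS$ independent of $\Fh$ and $\Fh$ is continuous on compact subsets of $\SSS$, we have
\[
\E[\mu_{h^w}(A_k) \giv \Fh] = \int_{A_k} e^{\gamma \Fh(z)} \rho(\im z)\, dz,
\]
where $\rho(y)$ is the density of $\E[\mu_{h^0}]$ along a horizontal slice. By horizontal translation invariance of $\SSS$ it depends only on the imaginary coordinate, and $\rho(y) \asymp \sin(y)^{\gamma^2/2}$; in particular $\rho$ is bounded and integrable on $(0,\pi)$.

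\textbf{Factorization for $k \geq 1$.} Decompose $\Fh(z) = Y_{\re z} + \Fh^L(z)$ where $Y_t = B_{2t} + (\gamma - 2/\gamma)t$ (conditioned on $Y_t > 0$ for $t > 0$, last hitting $0$ at $t = 0$) is the radial part and $\Fh^L$ is the harmonic extension of the independent lateral free boundary GFF, which is stationary under horizontal translation. After the substitution $z = k + w$ with $w \in A_0$,
\[
\E[\mu_{h^w}(A_k)\giv \Fh] = e^{\gamma Y_k} Z_k, \qquad Z_k \coloneqq \int_{A_0} e^{\gamma(Y_{k+\re w} - Y_k) + \gamma \Fh^L(k+w)} \rho(\im w)\, dw.
\]
Applying H\"older's inequality with conjugate exponents $\alpha = \wt{p}/p$ and $\beta = \wt{p}/(\wt{p} - p)$,
\[
\E\!\left[ \E[\mu_{h^w}(A_k) \giv \Fh]^p \right] \leq \E\!\left[e^{\gamma \wt{p} Y_k}\right]^{p/\wt{p}} \cdot \E\!\left[Z_k^{p\beta}\right]^{1/\beta}.
\]
The conditioning $Y_t > 0$ on $t > 0$ introduces at most a constant multiplicative factor for $k \geq 1$ (by absolute continuity against unconditioned Brownian motion with drift), so the Gaussian moment generating function yields $\E[e^{\gamma \wt{p} Y_k}] \leq C e^{k \wt{p}(\gamma^2 - 2) + k\wt{p}^2 \gamma^2}$. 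Raising to the $p/\wt{p}$ power and using $p \wt{p} \leq \wt{p}^2$ gives the desired first factor $Ce^{kp(\gamma^2-2) + k\wt{p}^2 \gamma^2}$. The second factor $\E[Z_k^{p\beta}]$ is uniformly bounded in $k$ for $p\beta = p\wt{p}/(\wt{p}-p)$ small enough (which holds when $p, \epsilon$ are small), using the strong Markov property of $Y$ at time $k$, horizontal translation invariance of $\Fh^L$, and standard GMC moment bounds on the bounded rectangle $A_0$.

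\textbf{Remaining cases and main obstacle.} For $k = 0$ the claim reduces to a standard LQG moment estimate on a bounded rectangle. For $k \leq -1$ the same factorization works, but now $Y_k$ is an unconditioned two-sided Brownian motion with drift (the conditioning $Y_t > 0$ acts only for $t > 0$), hence has mean $(\gamma - 2/\gamma)k < 0$ and variance $2|k|$; the Gaussian moment generating function then delivers $\E[e^{\gamma \wt{p} Y_k}] \leq C e^{-|k|\wt{p}(\gamma^2 - 2) + |k|\wt{p}^2\gamma^2}$, matching the claimed bound after raising to $p/\wt{p}$. The main technical hurdles will be (i) rigorously absorbing the positivity conditioning on $Y_t$, $t > 0$, without spoiling the exponential rate (handled by an absolute continuity / Bessel-type comparison once $Y_k$ is typically far from $0$), and (ii) securing the uniform-in-$k$ moment bound on $Z_k$ despite the log-correlated nature of $\Fh^L$ and the degeneracy of $\rho(y)$ near $\im y \in \{0,\pi\}$; the latter is standard once one isolates a small neighborhood of the edges of the strip.
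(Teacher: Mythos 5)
Your treatment of the off-center blocks follows essentially the same route as the paper: factor out the radial value $e^{\gamma Y_k}$, apply H\"older, bound the exponential moment of the drifted Brownian motion (paying only a constant for the positivity conditioning, which the paper implements by waiting until $Y$ first hits $1$ and then removing the conditioning at cost $1/\p[F]$, and which for $k\leq 0$ disappears since that side is unconditioned), and reduce the remaining factor to the $A_0$ estimate by translation invariance; your exponent bookkeeping ($p\wt p\leq \wt p^2$) is correct and matches the stated bounds.

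The genuine gap is the base case. The first assertion of the lemma, $\E[\E[\mu_{h^w}(A_0)\giv\Fh]^p]<\infty$, is exactly where the paper does its real work, and your proposal supplies no argument for it beyond calling it ``a standard LQG moment estimate on a bounded rectangle''; your uniform-in-$k$ bound on $\E[Z_k^{p\beta}]$ silently rests on the same unproved estimate. The difficulty is that $\E[\mu_{h^w}(A_0)\giv\Fh]=\int_{A_0}e^{\gamma\Fh(z)}r_\SSS(z)^{\gamma^2/2}dz$ involves the harmonic part $\Fh$, whose supremum blows up as one approaches $\partial\SSS$ (it is the harmonic extension of free-boundary-GFF boundary data), so finiteness of the $p$-th moment is a quantitative competition between the growth of $\E[e^{\gamma p\sup_Q|\Fh|}]$ on dyadic scales and the damping factor $\dist(Q,\partial\SSS)^{p\gamma^2/2}|Q|^p$. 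The paper resolves this with a Whitney-square decomposition of $A_0$, concavity of $x\mapsto x^p$, the sup estimate of \cite[Lemma~A.2]{kms2021regularity} after comparing $\Fh$ with the harmonic part of a free-boundary GFF, and the explicit verification that $q_\delta(p)=(1+\delta)\gamma^2p^2-p(2+\gamma^2/2)+1<0$ for suitable small $p$; none of this is automatic from off-the-shelf GMC moment bounds as stated (though one could try to shortcut via conditional Jensen, $\E[\E[X\giv\Fh]^p]\leq\E[X^p]$ for $p\leq 1$, and then cite moment bounds for the quantum area of a wedge near its boundary --- but that step and citation are absent from your writeup). Two smaller points: the identity $\Fh(z)=Y_{\re z}+\Fh^L(z)$ is not literally correct, since $\Fh$ is harmonic while $Y_{\re z}$ is not (the paper factors out only the constant $Y_k$, and even there one should be slightly careful about measurability of $Y_k$ with respect to $\Fh$); and ``absolute continuity against unconditioned Brownian motion with drift'' needs the paper's splitting at the hitting time of level $1$, because the conditioning event has probability zero when started from $0$.
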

\begin{proof}
By the independence of $h^0$ and $\Fh$ we have from \cite[Proposition~1.2]{ds2011kpz} that
\begin{align*}
\E\!\left[ \mu_{h^w}(A_0) \, \middle| \, \Fh \right] = \int_{A_0} e^{\gamma \Fh(z)} r_{\SSS}(z)^{\gamma^2/2} dz
\end{align*}
where $r_\SSS(z)$ is the conformal radius of $\SSS$ as seen from $z$.

Let $\wt{\CQ}$ be a Whitney square decomposition of $A_0$ (see for example~\cite[Section~I.4]{gm2005harmonic}).  Fix $\wh{\delta} > 0$ and let $\CQ$ be formed by subdividing each $Q \in \wt{\CQ}$ into a finite number of subsquares (depending only on $\wh{\delta}$) of equal size so that if $Q \in \CQ$ then $\diam(Q) / \dist(Q,\partial \SSS) \leq \wh{\delta}$.  Fix some $p \in (0,1)$ to be determined later. By the concavity of the function $x \mapsto x^p$ for $p \in (0,1)$ we have that
\begin{align*}
\left(  \int_{A_0} e^{\gamma \Fh(z)} r_{\SSS}(z)^{\gamma^2/2} dz \right)^p \lesssim \sum_{Q \in \CQ} e^{\gamma p \sup_{z \in Q} |\Fh(z)|} \dist( Q, \partial \SSS)^{p \gamma^2/2} | Q |^p,
\end{align*}
where $|Q|$ is the area of $Q$. Taking expectations, we have that
\begin{align}\label{eq:bound_rectangle_measure}
	\E\!\left[ \E\!\left[ \mu_{h^w}(A_0) \, \middle| \, \Fh \right]^p \right] \lesssim \sum_{Q \in \CQ} \E\!\left[e^{\gamma p \sup_{z \in Q} |\Fh(z)|} \right] \dist( Q, \partial \SSS)^{p \gamma^2/2} | Q |^p.
\end{align}
Next, note that $h^w$ differs from a free boundary GFF only in the vertical direction. More precisely, (recall Remark~\ref{rmk:radlat}), the projection of $h^w$ onto $H^L(\SSS)$ has the same law as the projection of a free boundary GFF $h^f$ on $\SSS$ onto $H^L(\SSS)$. Furthermore, let $Y_t$ denote the average value of $h^w$ on $\{t\} \times (0,\pi)$, that is, its projection on $H^R(\SSS)$.  Then the law of $Y_t$ is given by that of $B_{2t} + (\gamma - \tfrac{2}{\gamma})t$ where $B$ is a two-sided Brownian motion with $B_0 = 0$ and conditioned so that $B_{2t} + (\gamma - \tfrac{2}{\gamma})t > 0$ for all $t > 0$. If $U_t$ denotes the projection of $h^f$ onto $H^R(\SSS)$ then the law of $U_t$ is equal to that of $\wt{B}_{2t}$ where $\wt{B}$ is a two-sided Brownian motion with $\wt{B}_0 = 0$.  Consequently, it follows that if~$\Fh^f$ denotes the harmonic part of $h^f$ then for each $Q \in \CQ$,
\begin{align}\label{eq:harmonic_parts_comparable}
	\E\!\left[e^{\gamma p \sup_{z \in Q} |\Fh(z)|} \right] \asymp \E\!\left[e^{\gamma p \sup_{z \in Q} |\Fh^f(z)|} \right]
\end{align}
where the implicit constant is independent of $Q$. For each $k$ we let $\CQ_k = \{Q \in \CQ: 2^{-k} \leq \dist(Q,\partial\SSS) < 2^{-k+1}\}$.  Fix $\delta > 0$.  By decreasing the value of $\wh{\delta} > 0$ if necessary \cite[Lemma~A.2]{kms2021regularity} together with~\eqref{eq:harmonic_parts_comparable} implies for $Q \in \CQ_k$ that
\begin{align}\label{eq:harmonic_part_bound}
	\E\!\left[e^{\gamma p \sup_{z \in Q} |\Fh(z)|} \right] = O(2^{k(1+\delta) \gamma^2 p^2}),
\end{align}
where the implicit constant depends only on $\delta$. 

Next, note that $|\CQ_k| \asymp2^k$ with the implicit constant depending only on $\delta$. Moreover, since $\dist(Q,\partial \SSS) \asymp \diam(Q)$ (with constants depending only on $\delta$), we have for all $Q \in \CQ_k$ that
\begin{align*}
	\dist(Q,\partial\SSS)^{p\gamma^2/2} |Q|^p \asymp 2^{-kp(2 + \gamma^2/2)}
\end{align*}
where the implicit constants depend only on $\delta$. Combining this with~\eqref{eq:bound_rectangle_measure} and~\eqref{eq:harmonic_part_bound} it follows that
\begin{align}\label{eq:sum_bound}
	\E\!\left[ \E\!\left[ \mu_{h^w}(A_0) \, \middle| \, \Fh \right]^p \right] \lesssim \sum_{k=0}^\infty O\!\left(2^{k((1+\delta) \gamma^2 p^2 -p(2+\gamma^2/2)+1)}\right).
\end{align}
Thus if $(1+\delta) \gamma^2 p^2 -p(2+\gamma^2/2)+1 < 0$ we have that the sum converges and the result follows. Note that the polynomial $q(p) = \gamma^2 p^2 -p(2+\gamma^2/2)+1$ attains its minimum $-\tfrac{(4-\gamma^2)^2}{16\gamma^2} < 0$ at the value $p = \tfrac{4+\gamma^2}{4\gamma^2}$.  Consequently, for $\delta > 0$ very small there exists $p_0 > 0$ so that the polynomial $q_\delta(p) = (1+\delta)\gamma^2p^2 - p(2+\gamma^2/2)+1$ satisfies $q_\delta(p_0) < 0$ and hence such that~\eqref{eq:sum_bound} is bounded for $p = p_0$. By Lyapunov's inequality, the result holds for all $0< p < p_0$ as well.

We now turn to the bound on $\mu_{h^w}(A_k)$ for $k \geq 1$.  By H\"{o}lder's inequality, with $p',q' > 1$ satisfying $\tfrac{1}{p'} + \tfrac{1}{q'} = 1$, we have that
\begin{align*}
	\E\!\left[ \E[\mu_{h^w}(A_k) \giv \Fh]^p \right] \leq \E\!\left[ e^{p' p \gamma Y_k} \right]^{\frac{1}{p'}} \E\!\left[ \E[\mu_{h^w - Y_k}(A_k) \giv \Fh]^{q'p} \right]^{\frac{1}{q'}}.
\end{align*}

Let $\sigma = \inf\{t \geq 0 : Y_t = 1\}$.  Then we note that $Y_{t+\sigma}$ has the same law as $Z_t = B_{2t} + (\gamma-2/\gamma) t$ conditioned on the positive probability event $F$ that $Z_t > 0$ for all $t \geq 0$ where $B$ is a standard Brownian motion with $B_0 = 1$.  Let $\F_t = \sigma(Y_s : s \leq t)$.  Thus for $b > 0$ we have that
\begin{align*}
\E[ e^{b Y_t} ]
&\leq e^b + \E[ \exp(b Y_t) \one_{t \geq \sigma}]
 = e^b + \E[ \E[ \exp(b Y_t) \giv \F_\sigma] \one_{t \geq \sigma}].
\end{align*}
We moreover have on $\{t \geq \sigma\}$ that
\begin{align*}
\E[ \exp(b Y_t) \giv \F_\sigma]
&= \E[ \exp(b Z_{t-\sigma}) \giv F]
 \leq \frac{\E[ \exp(b Z_{t-\sigma})]}{\p[F]}\\
 &= \frac{\exp( (b(\gamma-2/\gamma) + b^2)(t-\sigma))}{\p[F]}\\
 &\leq \frac{\exp( (b(\gamma-2/\gamma) + b^2)t)}{\p[F]}.
\end{align*}
Inserting this into the above gives that
\[ \E[ \exp(b Y_t)] \leq e^b + \frac{1}{\p[F]} e^{(b(\gamma-2/\gamma) + b^2)t}.\]
Applying this for $b = p p' \gamma$ and $t =k$ gives us that
\begin{align*}
	\E\!\left[ e^{p p' \gamma Y_k} \right] \lesssim e^{k((p p')^2 \gamma^2 + p p' \gamma^2 - 2p p')}.
\end{align*}
The same argument works to bound the exponential moments of $(Y_{k+t}-Y_k)_{t \in [0,1]}$. Hence,
\begin{align*}
	\E\!\left[ \E[ \mu_{h^w - Y_k}(A_k) \giv \Fh]^{q'p} \right] \lesssim \E\!\left[ \E[\mu_{h^w}(A_0) \giv \Fh]^{q'p} \right] < \infty,
\end{align*}
whenever $q'p$ is sufficiently small.  Hence, the second part of the lemma follows.

Finally, we bound the moments of the rectangles $A_{-k}$ for $k \geq 1$.  We have that
\begin{align*}
	\E\!\left[ \E[ \mu_{h^w}(A_{-k}) \giv \Fh]^p \right]
	&= \E\!\left[ e^{\gamma Y_{1-k}}\E[ \mu_{h^w - Y_{1-k}}(A_{-k}) \giv \Fh]^p \right]\\
	&\leq \E[ e^{\gamma p p' Y_{1-k}}]^{1/p'} \E[ \E[ \mu_{h^w - Y_{1-k}}(A_{-k}) \giv \Fh]^{p q'}]^{1/q'}
\end{align*}
The second term on the right-hand side is bounded by a constant times $\E[ \E[\mu_{h^w}(A_0) \giv \Fh]^{p q'}]^{1/q'}$ (since the average process of the latter is a Brownian motion with positive drift, conditioned to stay positive, instead of an average process given by a Brownian motion with positive drift), which is finite for $p > 0$ small, and the former term is
\begin{align*}
	\E[ e^{p p' \gamma Y_{1-k}} ] \lesssim e^{(1-k) p p' (\gamma^2 - 2) + (p p')^2 \gamma^2 (k-1)} \lesssim e^{-k p p' (\gamma^2 - 2) + (p p')^2 \gamma^2 k}. 
\end{align*}
where the implicit constant can be taken independent of $k$. Note that the exponent is negative for all $\gamma \in (\sqrt{2},2)$ and $p, \epsilon > 0$ sufficiently small, where $\epsilon$ is such that $p p' = p+\epsilon$.
\end{proof}

\begin{lem}
\label{lem:locally_finite_disk}
Fix $\kappa' \in (4,8)$, let $\gamma = 4/\sqrt{\kappa'}$, let $\eta^0 \sim \SLE_{\kappa'}$ and let $h^w$ be an independent weight $\tfrac{3\gamma^2}{2}-2$ quantum wedge parameterized by $\UH$ and with the circle average embedding.  Let $h^w = h^0 + \Fh$ be the decomposition of $h^w$ into a GFF $h^0$ on $\UH$ with zero boundary conditions and a function $\Fh$ which is harmonic on $\UH$.  Then we a.s.\ have that $\E[\sigma^{\eta^0,h^w}(\D \cap \UH) \giv \Fh] < \infty$. 
\end{lem}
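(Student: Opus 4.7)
The plan is to exploit the scaling of the generalized quantum length $\sigma$ under continuous shifts of the field, combined with the moment bounds from Lemma~\ref{lem:rectangle_bound}, after passing to the strip via $\psi(z)=e^z$. First I would establish the scaling identity
\[
\sigma^{\eta,\, h + c}(dz) \;=\; e^{(2/\gamma)\, c(z)}\, \sigma^{\eta,\, h}(dz)
\]
for continuous $c$, which follows from the definition $\sigma^{\eta,h} = \lim_\epsilon \epsilon^{\wh\alpha} N^{\eta,h}_\epsilon$, the scaling $\nu_{h+c} = e^{\gamma c/2} \nu_h$ of quantum boundary length, and $\wh\alpha = 4/\gamma^2$ (so $\gamma\wh\alpha/2 = 2/\gamma$). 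Since $\Fh$ is harmonic hence continuous, and independent of $(h^0, \eta^0)$, conditioning on $\Fh$ gives
\[
\E[\sigma^{\eta^0,\, h^w}(\D \cap \UH) \giv \Fh] \;=\; \int_{\D \cap \UH} e^{(2/\gamma)\, \Fh(z)}\, d\mu_0(z),
\]
where $\mu_0 := \E[\sigma^{\eta^0,\, h^0}]$ is a deterministic measure. I would justify that $\mu_0$ is Radon on $\UH$ by combining the local absolute continuity of the zero-boundary GFF $h^0$ with a weight $3\gamma^2/2-2$ wedge field on compact subsets of $\UH$ and the DMS stable-L\'evy construction of $\sigma$ on wedges from~\cite{dms2014mating}.

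Next I would pass to the strip $\SSS$ via $\psi(z) = e^z$, under which $\D \cap \UH = \psi(\cup_{k \leq -1} A_k)$. By conformal covariance of quantum surfaces~\eqref{eq:QS}, the wedge $h^w$ corresponds to $h^w_\SSS$ in the strip with harmonic part $\Fh_\SSS(w) = \Fh(e^w) + Q \re w$, so $\Fh(e^w) = \Fh_\SSS(w) - Q \re w$; the Jacobian of the measure transfer is $|\psi'(w)|^2 = e^{2\re w}$, and by horizontal translation and scale invariance of the zero-boundary GFF--$\SLE_{\kappa'}$ pair the density of $\mu_0$ in strip coordinates factors as $g(\im w)\, dw$ for some integrable $g$ on $(0,\pi)$. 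After substitution, the task becomes showing that
\[
\sum_{k \leq -1} \int_{A_k} e^{(2/\gamma) \Fh_\SSS(w)}\, e^{(2 - 2Q/\gamma) \re w}\, g(\im w)\, dw \;<\; \infty \quad\text{a.s.}
\]

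To control each rectangle, I would invoke the moment bound of Lemma~\ref{lem:rectangle_bound}, or rerun the same GMC computation with the exponent $\gamma$ in $e^{\gamma \Fh}$ replaced throughout by $2/\gamma \in (1,\sqrt{2})$. The analog of the critical quadratic is
\[
\wt q_\delta(p) \;=\; (1+\delta)\tfrac{4}{\gamma^2} p^2 - p\bigl(2 + \tfrac{2}{\gamma^2}\bigr) + 1,
\]
which is strictly negative on a nontrivial range of $p > 0$ whenever $(\gamma + 1/\gamma)^2 > 4(1+\delta)$. For $\gamma \in (\sqrt 2, 2)$ one has $\gamma + 1/\gamma > 3/\sqrt{2} > 2$, so such a $p$ exists once $\delta > 0$ is sufficiently small. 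Combined with the deterministic exponential factor $e^{(2 - 2Q/\gamma)\re w}$ coming from the Jacobian and the negative drift of $Y_k$ as $k \to -\infty$, the $p$-th moments of the summands are summable in $k \leq -1$ with geometric decay; subadditivity of $x \mapsto x^p$ for $p \leq 1$ then upgrades this to a.s. summability of the series, proving the lemma.

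The hard part, in my view, is justifying the scaling identity for the non-constant harmonic shift $c = \Fh$ rather than a genuine constant: one has to argue bubble-by-bubble that on the scale of any sufficiently small bubble the shift $\Fh$ is essentially constant by continuity, while simultaneously controlling the counting measures $N_\epsilon^{\eta,h}$ uniformly to pass to the $\epsilon \to 0$ limit. Once this is in place, the remaining work is to combine the conformal Jacobian factor, the explicit drift of the wedge's Brownian component $Y_t$, and a straightforward adaptation of the GMC moment computation of Lemma~\ref{lem:rectangle_bound} with the adjusted exponent.
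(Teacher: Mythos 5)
Your proposal has a genuine gap, and it sits exactly where you place the ``remaining work'': the claim that $\mu_0 := \E[\sigma^{\eta^0,h^0}]$ is a Radon measure on $\UH$. This is not something that follows from local absolute continuity with a wedge field together with the stable-L\'evy construction of \cite{dms2014mating}; it is essentially the content of the lemma itself (indeed, in the paper the local finiteness of $\E[\sigma^{\eta^0,h^0}]$ on compacts of $\UH$ is \emph{deduced} from Lemma~\ref{lem:locally_finite_disk} in the proof of Lemma~\ref{lem:locally_finite_SLE}, not the other way around). The DMS construction gives you that the quantum natural time of $\eta$ up to any fixed curve time is finite, but the quantity at stake is the expected amount of quantum natural time the curve spends inside a fixed Euclidean set, conditionally on the harmonic part; a.s.\ finiteness is easy (the curve is transient, so $\sigma^{\eta^0,h^w}(\D\cap\UH)\leq\tau$ with $\tau$ the last exit time of $\D$ in quantum natural time), but a first-moment bound requires an actual argument. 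The paper's proof supplies exactly this: if $\tau$ is large then, by the Poissonian structure of the jumps of the left/right boundary-length stable L\'evy processes, the curve cuts out order $\tau$ quantum disks of boundary length at least $1$, each of which has quantum mass at least $1$ with uniformly positive probability (Hoeffding), so $\mu_{h^w}(B(0,R))$ must be large; the radius $R$ up to which the curve wanders before finally leaving $\D$ is controlled by the boundary two-arm exponent of \cite{wz2017boundaryarm}; and $\E[\mu_{h^w}(B(0,e^k))\one_E]$ is controlled on a high-probability $\sigma(\Fh)$-measurable event $E$ via Lemma~\ref{lem:rectangle_bound}. None of this chain appears in your proposal; you use Lemma~\ref{lem:rectangle_bound} only to integrate $e^{(2/\gamma)\Fh}$ against an intensity whose finiteness you have assumed, which is a different and much easier task. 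As written, the argument is circular.

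Two secondary points. First, even granting a Radon $\mu_0$, your strip-coordinate factorization ``$g(\im w)\,dw$'' is wrong: the pair $(\eta^0,h^0)$ is scale invariant in law but $\sigma^{\eta^0,h^0}$ is scale \emph{covariant} (as computed in the proof of Lemma~\ref{lem:locally_finite_SLE}, scaling by $R$ multiplies the intensity by $R^{1+4/\gamma^2}$), so the density in strip coordinates carries an additional factor $e^{(1+4/\gamma^2)\re w}$, which must be tracked in your exponent bookkeeping; you also implicitly assume $\mu_0$ has a density and that the lateral profile is integrable near $\theta\in\{0,\pi\}$, neither of which is free for boundary-touching $\kappa'\in(4,8)$. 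Second, the pointwise scaling identity $\sigma^{\eta,h+c}(dz)=e^{(2/\gamma)c(z)}\sigma^{\eta,h}(dz)$ for non-constant continuous $c$ is plausible and in the spirit of manipulations the paper does perform (cf.\ the proof of Lemma~\ref{lem:measure_SLE}), but by itself it does not rescue the argument, because the finiteness input it is meant to feed on is missing.
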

\begin{proof}
Let $\gamma = 4/\sqrt{\kappa'}$ and fix $\epsilon, p > 0$ small enough so that the bounds of Lemma~\ref{lem:rectangle_bound} hold with $\wt{p} = p+\epsilon$.

We parameterize $\eta$ by generalized quantum length (as we shall bound the mass of $\eta \cap \D$ under the measure $\mu^0$, the parameterization of $\eta$ does not matter) and let $\tau = \sup\{ t \geq 0: \eta(t) \in \D \}$ and $R = \sup_{0 \leq t \leq \tau} |\eta(t)|$. We note that since $\eta$ is parameterized by generalized quantum length, $\sigma^{\eta^0,h^w}( \D \cap \UH) \leq \tau$ and hence it suffices to show that $\E[\tau \giv \Fh] <\infty$ a.s.

The strategy to bound $\E[\tau \giv \Fh]$ is as follows. We first define an event $E$ for $\Fh$ which can be taken to have probability as close to $1$ as we want.  We note that if $\tau$ is large, then the mass of $\mu_{h^w}(B(0,R))$ is typically large as well. Morally, this is because $\eta$ is then likely to cut out many quantum disks of boundary length at least $1$, of which many will have mass at least $1$ as well. Hence, the probability that this happens will be small due to bounds on $\mu_{h^w}(B(0,R))$.

For now, change the coordinates of the quantum surface $(\UH,h^w)$ to $(\SSS,\wt{h}^w)$ (recall~\eqref{eq:QS}). Let $\wt{h}^w = \wt{h}^0 + \wt{\Fh}$ be the decomposition of $\wt{h}^w$ into a sum of a GFF $\wt{h}^0$ on $\SSS$ with zero boundary conditions and a function $\wt{\Fh}$ which is harmonic on $\SSS$.  Fix a constant $C > 0$.  For each $k \geq 0$ we let $\wt{E}_k = \{\E[\mu_{\wt{h}^w}(A_k) \giv \wt{\Fh}] \leq C e^{(1+\epsilon)k(p(\gamma^2-2) + \wt{p}^2 \gamma^2)/p} \}$.  By Chebyshev's inequality and Lemma~\ref{lem:rectangle_bound} we have that
\begin{align*}
	\p[\wt{E}_k^c] \lesssim C^{-p} e^{-\epsilon k(p(\gamma^2-2) + \wt{p}^2 \gamma^2)}.
\end{align*}
As the exponent above is negative for $p,\epsilon > 0$ sufficiently small, by a union bound we have that $\cap_{k \geq 0} \wt{E}_k$ can be made to have probability as close to $1$ as we like by choosing $C > 0$ large.  For each $k \leq -1$ we similarly let $\wt{E}_k = \{ \E[\mu_{\wt{h}^w}(A_k) \giv \wt{\Fh}] \leq C e^{-(1-\epsilon)|k|(p(\gamma^2-2) - \wt{p}^2 \gamma^2)/p} \}$.  By Chebyshev's inequality and Lemma~\ref{lem:rectangle_bound} we have in this case that
\begin{align*}
	\p[\wt{E}_k^c] \lesssim C^{-p} e^{-\epsilon |k|(p(\gamma^2-2) - \wt{p}^2 \gamma^2)}.
\end{align*}
As the exponent above is negative for $p,\epsilon > 0$ sufficiently small, by a union bound we have that $\cap_{k \leq -1} \wt{E}_k$ can be made to have probability as close to $1$ as we like by choosing $C > 0$ large.  Altogether, with $\wt{E} = \cap_k \wt{E}_k$ by choosing $C > 0$ large we can make $\wt{E}$ to have probability as close to $1$ as we like.  Changing the parameterization back to $(\UH,h^w)$, and letting $E$ be the analog of $\wt{E}$ in $\UH$, that is, $E = \cap_{k} E_k$ where $E_k = \{ \E[\mu_{h^w}(\wh{A}_k) \giv \Fh] \leq C e^{(1+\epsilon)k(p(\gamma^2-2) + \wt{p}^2 \gamma^2)/p} \}$ for $k \geq 0$ and $E_k = \{ \E[\mu_{h^w}(\wh{A}_k) \giv \Fh] \leq C e^{-(1-\epsilon)(|k|p(\gamma^2-2) - \wt{p}^2 \gamma^2)/p} \}$ for $k \leq -1$ where $\wh{A}_k = B(0,e^{k+1}) \setminus B(0,e^k)$ then by the above and that $E \in \sigma(\Fh)$ we have
\begin{align*}
	\E[ \mu_{h^w}(B(0,e^k)) \one_{E} ] \lesssim e^{(1+\epsilon)k(p(\gamma^2-2) + \wt{p}^2 \gamma^2)/p} \quad\text{for}\quad k \geq 0.
\end{align*}
We now focus on $\eta \sim \SLE_{\kappa'}$. Note that by~\cite[Theorem~1.1, (1.3)]{wz2017boundaryarm} with $n=2$, we have for $k \geq 1$ that
\begin{align*}
	\p[\eta \ \text{returns to} \ \D \ \text{after leaving} \ B(0,e^k)] \asymp e^{-\big(\frac{3 \gamma^2}{4}-1 + o(1) \big) k}.
\end{align*}
Thus, recalling that $\tau$ is the last time $\eta$ visits $\D$ and $R = \sup_{0 \leq t \leq \tau} |\eta(t)|$, we have that
\begin{align}\label{eq:mass_bound_ball}
	\E[ \mu_{h^w}(B(0,R)) \one_E ] &\leq \sum_{k = 1}^\infty \E[ \mu_{h^w}(B(0,e^{k+1})) \one_E ] \p[ \eta \ \text{visits} \ \D \ \text{after leaving} \ B(0,e^k) ] \\
	&= \sum_{k = 1}^\infty O\Big( e^{(1+\epsilon)k(p(\gamma^2-2) + \wt{p}^2 \gamma^2)/p} \cdot e^{-\big(\frac{3 \gamma^2}{4}-1 + o(1) \big) k} \Big). \nonumber
\end{align}
By choosing $\epsilon, p > 0$ sufficiently small we can ensure that
\begin{align*}
	\frac{1+\epsilon}{p} (p(\gamma^2-2) + \wt{p}^2 \gamma^2) -\left(\frac{3 \gamma^2}{4}-1 \right) < 0
\end{align*}
so that the sum in~\eqref{eq:mass_bound_ball} converges and hence $\E[ \mu_{h^w}(B(0,R)) \one_E ] < \infty$.

Recall that since $\eta$ is parameterized by generalized quantum length, the left/right boundary lengths are independent $\kappa'/4$-stable L\'{e}vy processes.  It thus follows that there is a constant $c > 0$ such that the number of downward jumps of size at least $1$ made before time $T$ has law $\Po(cT)$. By Poisson concentration, there is a constant $\wt{c} > 0$ such that the probability that at most $cT/2$ negative jumps of size at least $1$ have occurred by time $T$ is $O(e^{-\wt{c}T})$.

Each of the surfaces cut out by $\eta$ corresponding to a downward jump of either the left or right boundary length process is a quantum disk with boundary length equal to the size of the downward jump and conditionally independent of the others given its boundary length. Moreover, if a quantum disk has boundary length $\ell \geq 1$, then there is a constant $a \in (0,1)$ uniform in $\ell \geq 1$ such that the event that the quantum mass is at least $1$ has probability lower bounded by $a$. Consequently, by Hoeffding's inequality there exists a constant $c^* > 0$ such that the probability that $\eta$ cuts out fewer than $acT/4$ quantum disks with mass at least $1$ is $O(e^{-c^* T})$. It follows that there exists a constant $q > 0$ such that for each $r > 0$ we have
\begin{align*}
	\p[ \tau \geq q r M, M \leq \mu_{h^w}(B(0,R)) \leq M+1, E] = O(e^{-c^*rM}).
\end{align*}
By summing over $r$ we see that
\begin{align*}
	\E[\tau \one_{\tau \geq 2qM} \one_{M \leq \mu_{h^w}(B(0,R)) \leq M+1} \one_E] = O(e^{-2c^*M})
\end{align*}
and hence
\begin{align*}
	&\E[ \tau \one_{M \leq \mu_{h^w}(B(0,R)) \leq M+1} \one_E ] \\
	&= \E[\tau \one_{\tau < 2 qM} \one_{M \leq \mu_{h^w}(B(0,R)) \leq M+1} \one_E] + \E[\tau \one_{\tau \geq 2 qM} \one_{M \leq \mu_{h^w}(B(0,R)) \leq M+1} \one_E] \\
	&\leq 2qM \p[ M \leq \mu_{h^w}(B(0,R)) \leq M+1, E] + O(e^{-2c^*M}).
\end{align*}
Since $\E[ \mu_{h^w}(B(0,R)) \one_E] < \infty$, we get by summing over $M$ that
\begin{align*}
	\E[ \tau \one_E ] < \infty.
\end{align*}
Since $E \in \sigma(\Fh)$, this implies that $\E[ \tau \giv \Fh] \one_E < \infty$ a.s.  Since we can make the probability of $E$ as close to $1$ as we like by adjusting the value of $C > 0$, we obtain that $\E[\tau \giv \Fh] < \infty$ a.s.  This completes the proof since $\sigma^{\eta^0,h^w}( \D \cap \UH) \leq \tau$ as we explained above.
\end{proof}

\begin{proof}[Proof of Lemma~\ref{lem:locally_finite_SLE}]
We begin by deducing from Lemma~\ref{lem:locally_finite_disk} that $\E[ \sigma^{\eta^0,h^0}](K) < \infty$ for each compact $K \subseteq \D \cap \UH$. 

First we note that since adding a constant $C$ to a field $h^*$ scales quantum lengths by $e^{\frac{\gamma}{2} C}$, for any $U \subseteq \UH$ we have that
\begin{align}\label{eq:scaling_quantum_length}
	\sigma^{\eta^0,h^*+C}(U) &= \lim_{\epsilon \to 0} \epsilon^{\wh{\alpha}} N_\epsilon^{\eta^0,h^*+C}(U) = \lim_{\epsilon \to 0} \epsilon^{\wh{\alpha}} N_{\epsilon e^{-\frac{\gamma}{2} C}}^{\eta^0,h^*}(U) = e^{\wh{\alpha}\frac{\gamma}{2}C} \lim_{\epsilon \to 0} (\epsilon e^{-\frac{\gamma}{2}C})^{\wh{\alpha}} N_{\epsilon e^{-\frac{\gamma}{2} C}}^{\eta^0,h^*}(U) \\
	&= e^{\wh{\alpha}\frac{\gamma}{2}C} \sigma^{\eta^0,h^*}(U) = e^{\frac{2}{\gamma} C} \sigma^{\eta^0,h^*}(U). \nonumber
\end{align}

Fix some compact set $K \subseteq \D \cap \UH$ and let $\delta \coloneqq \dist(K, \partial \UH) > 0$. We decompose $h^w$ into $h^w = h^0 + \Fh$, where $h^0$ is a zero-boundary GFF on $\UH$, $\Fh$ is a distribution which is harmonic on $\UH$, and $h^0$, $\Fh$ are independent. Then $\Fh$ is a.s.\ bounded on $K$ since $\delta > 0$ and hence $\p[ \sup_{z \in K} | \Fh(z) | \leq C] \to 1$ as $C \to \infty$. Thus letting $E_K$ be the event that $\sup_{z \in K} |\Fh(z)| \leq C$ we can choose $C < \infty$ so that $\p[E_K] \geq 1/2$.  It follows from~\eqref{eq:scaling_quantum_length} that
\begin{align*}
	\E[ \sigma^{\eta^0,h^0}(K) \giv \Fh] \one_{E_K} \leq e^{\frac{2}{\gamma} C} \E[ \sigma^{\eta^0,h^w}(K) \giv \Fh] \one_{E_K} \leq e^{\frac{2}{\gamma} C} \E[\sigma^{\eta^0,h^w}(K) \giv \Fh].
\end{align*}
Lemma~\ref{lem:locally_finite_disk} implies that the right hand side is a.s.\ finite and the independence of $h^0$ and $\Fh$ implies that $\E[ \sigma^{\eta^0,h^0}(K) \giv \Fh] = \E[ \sigma^{\eta^0,h^0}(K)]$.  Altogether, we have proved that $\E[\sigma^{\eta^0,h^0}(K)] < \infty$.

Finally, since $r_{\UH}(z) \geq \dist(z,\partial \UH)$, it follows that
\begin{align*}
	\E[\mu^0(K)] \leq \delta^{-2/\gamma^2} \E[\sigma^{\eta^0,h^0}(K)] < \infty.
\end{align*}
We now turn to the case of a general compact subset of $\UH$. Fix some compact set $\wt{K} \subseteq \UH$. Let $R' = \inf\{ r > 0: \wt{K} \subseteq B(0,r)\}$ and let $R = 2R'$. Let $\varphi_R:\UH \to \UH$ be given by $\varphi_R(z) = z/R$. Then $K \coloneqq \varphi_R(\wt{K}) \subseteq \D \cap \UH$ is compact. Moreover, by~\eqref{eq:QS}, we a.s.\ have that
\begin{align}\label{eq:coordinate_change_quantum_length}
	\sigma^{\varphi_R(\eta^0),h^0 \circ \varphi_R^{-1} + Q\log| (\varphi_R^{-1})'|}(K) = \sigma^{\eta^0,h^0}(\wt{K}).
\end{align}
Furthermore, $\varphi_R(\eta^0) \sim \SLE_{\kappa'}$ and $h^0 \circ \varphi_R^{-1}$ is a zero-boundary GFF on $\UH$. Thus $\sigma^{\eta^0,h^0}$ and $\sigma^{\varphi_R(\eta^0),h^0 \circ \varphi_R^{-1}}$ have the same law, so by~\eqref{eq:scaling_quantum_length} it follows that
\begin{align*}
	\E\!\left[ \sigma^{\eta^0,h^0}\right]\!(\wt{K}) &= \E\!\left[\sigma^{\varphi_R(\eta^0),h^0 \circ \varphi_R^{-1} + Q\log| (\varphi_R^{-1})'|}\right]\!(K) \\
	&= R^{\frac{2}{\gamma} Q} \E\!\left[\sigma^{\varphi_R(\eta^0),h^0 \circ \varphi_R^{-1}}\right]\!(K) \\
	&= R^{1+\frac{4}{\gamma^2}} \E\!\left[ \sigma^{\eta^0,h^0}\right]\!(K) < \infty,
\end{align*}
since $(\varphi_R^{-1})' = R$, $\tfrac{2}{\gamma} Q = 1 + \tfrac{4}{\gamma^2}$ and $K \subseteq \D \cap \UH$. Noting that, as above,
\begin{align*}
	\E\!\left[ \mu^0 \right]\!(\wt{K}) \leq \dist(\wt{K},\R)^{-2/\gamma^2} \E\!\left[ \sigma^{\eta^0,h^0} \right]\!(\wt{K}) < \infty,
\end{align*}
the result follows.
\end{proof}

\subsection{Conformal covariance and uniqueness}
\begin{lem}
\label{lem:measure_SLE}
We have that $\wt{\mu}^s = \mu^s$, i.e.,
\begin{align*}
    \mu^0|_{\eta^0([s,\infty))} \circ \psi_s(dz) = |\psi_s'(z)|^{1+\frac{\kappa'}{8}} \wt{\mu}^s(dz).
\end{align*}
\end{lem}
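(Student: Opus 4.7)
The plan is to transform both sides using the LQG coordinate change and the Markov property of the zero-boundary GFF. Starting from the change of variables $w = \psi_s(z)$ and applying the quantum surface invariance of $\sigma$ under $(D, h) \mapsto (\wt{D}, h \circ \psi + Q \log|\psi'|)$ with $\psi = \psi_s \colon \UH \to \UH \setminus K_s$, one obtains
\[
    \mu^0|_{\eta^0([s,\infty))} \circ \psi_s(dz) = F(\psi_s(z))\, \E[\sigma^{\eta^s, \wh{h}^0}(dz) \giv \eta^0] \quad\text{where}\quad \wh{h}^0 = h^0 \circ \psi_s + Q \log|\psi_s'|.
\]
To compare this with $\wt{\mu}^s$, I would next decompose $h^0|_{\UH \setminus K_s} = h^0_{\UH \setminus K_s} + \Fh^{\perp}$ via the Markov property of the GFF, where $h^0_{\UH \setminus K_s}$ is a zero-boundary GFF on $\UH \setminus K_s$ and $\Fh^{\perp}$ is its (conditionally independent) harmonic part. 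By conformal invariance of the zero-boundary GFF, $h^0_{\UH \setminus K_s} \circ \psi_s$ is a zero-boundary GFF on $\UH$ whose conditional law given $\eta^0$ matches that of $\wt{h}^s$. Writing $\wh{h}^0 = (h^0_{\UH \setminus K_s} \circ \psi_s) + g$ with $g(z) = \Fh^{\perp}(\psi_s(z)) + Q\log|\psi_s'(z)|$, the pointwise extension of~\eqref{eq:scaling_quantum_length} to continuous shifts gives
\[
    \E[\sigma^{\eta^s, \wh{h}^0}(dz) \giv \eta^0, \Fh^{\perp}] = e^{\frac{2}{\gamma} g(z)}\, \E[\sigma^{\eta^s, \wt{h}^s}(dz) \giv \eta^s].
\]

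Integrating out the Gaussian $\Fh^{\perp}(\psi_s(z))$, whose variance equals $\log(r_{\UH}(\psi_s(z))/r_{\UH\setminus K_s}(\psi_s(z)))$ by the Markov decomposition, and using the conformal radius identity $r_{\UH\setminus K_s}(\psi_s(z)) = |\psi_s'(z)|\, r_\UH(z)$, produces the factor $|\psi_s'(z)|^{2Q/\gamma}\bigl(r_\UH(\psi_s(z))/(|\psi_s'(z)|\, r_\UH(z))\bigr)^{2/\gamma^2}$. Multiplying by $F(\psi_s(z)) = r_\UH(\psi_s(z))^{-2/\gamma^2}$ cancels the powers of $r_\UH(\psi_s(z))$; the remaining factor $r_\UH(z)^{-2/\gamma^2}$ is precisely $F(z)$ and combines with $\E[\sigma^{\eta^s, \wt{h}^s}(dz) \giv \eta^s]$ to give $\wt{\mu}^s(dz)$, while the exponent of $|\psi_s'(z)|$ simplifies to $2Q/\gamma - 2/\gamma^2 = 1 + 2/\gamma^2 = 1 + \kappa'/8$ using $Q = 2/\gamma + \gamma/2$ and $\gamma^2 = 16/\kappa'$. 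This yields exactly the claimed identity.

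The main technical step is justifying the pointwise scaling $\sigma^{\eta, h + f}(dz) = e^{\frac{2}{\gamma} f(z)} \sigma^{\eta, h}(dz)$ for continuous (and random, a.s.\ continuous) shifts $f$, extending the constant-shift case~\eqref{eq:scaling_quantum_length}. This can be handled by approximating $f$ by step functions on finer and finer partitions, applying~\eqref{eq:scaling_quantum_length} on each cell, and passing to the limit using the a.s.\ local finiteness of $\sigma^{\eta^0, h^0}$ provided by Lemma~\ref{lem:locally_finite_SLE}. Once this is in place, the interchange of the conditional expectation with the measure identity is justified by Fubini using the same local finiteness, and the Gaussian exponential moment computation reduces to the standard variance formula for the harmonic part of a GFF restricted to a subdomain.
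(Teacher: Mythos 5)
Your proposal is correct and follows essentially the same route as the paper: the coordinate change $h^0 \mapsto h^0\circ\psi_s + Q\log|\psi_s'|$, then a coupling in which the zero-boundary GFF on $\UH\setminus K_s$ pulled back by $\psi_s$ plays the role of $\wt{h}^s$ and the harmonic remainder (your $\Fh^\perp$, the paper's $\FH$) is integrated out via its Gaussian exponential moment with variance $\log r_\UH(w) - \log r_{\UH\setminus K_s}(w)$, which matches the paper's expression after using $r_{\UH\setminus K_s}(\psi_s(z)) = |\psi_s'(z)|\,r_\UH(z)$. Your pointwise scaling step $\sigma^{\eta,h+f}(dz) = e^{\frac{2}{\gamma}f(z)}\sigma^{\eta,h}(dz)$ for continuous shifts is exactly the manipulation the paper performs by ``arguing as in~\eqref{eq:scaling_quantum_length}'', and the exponent bookkeeping $2Q/\gamma - 2/\gamma^2 = 1+\kappa'/8$ agrees.
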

\begin{proof}
Since quantum length is invariant under the coordinate change $h^0 \mapsto h^0 \circ \psi_s + Q\log|\psi_s'|$, we have
\begin{align*}
    \mu^0|_{\eta^0([s,\infty))} \circ \psi_s(dz) &= F(\psi_s(z)) \E\!\left[\sigma_{[s,\infty)}^{\eta^0,h^0} \circ \psi_s(dz) \, \middle| \, \eta^0 \right] \\
    &= F(\psi_s(z)) \E\!\left[ \lim_{\epsilon \rightarrow 0} \epsilon^{\wh{\alpha}} N_\epsilon^{\eta^0,h^0,[s,\infty)} \circ \psi_s(dz) \, \middle| \, \eta^0 \right] \\
    &= F(\psi_s(z)) \E\!\left[ \lim_{\epsilon \rightarrow 0} \epsilon^{\wh{\alpha}} N_\epsilon^{\eta^s,h^0\circ \psi_s + Q\log|\psi_s'|}(dz) \, \middle| \, \eta^0 \right]\!.
\end{align*}
Furthermore, since $\wt{h}^s$ is a zero-boundary GFF, we have that $\wt{h}^s \circ f_s$ is a zero-boundary GFF in $\UH \setminus \eta^0([0,s])$. Thus we can assume that $h^0$, $\wt{h}^s$ are coupled together in such a way so that we can define a Gaussian field $\FH$ which conditionally on $\eta^0$ is independent of $\wt{h}^s$ and is such that $h^0 = \wt{h}^s \circ f_s + \FH$. Then $\FH$ has covariance kernel given by
\begin{align*}
    G_{\UH}(z,w)- G_{\UH\setminus \eta^0([0,s])}(z,w) = G_{\UH}(z,w) - G_{\UH}(f_s(z),f_s(w))
\end{align*}
and at a point $z$ the variance of $\FH(z)$ is given by 
\begin{align}\label{eq:C0var}
    \textup{Var} \, \FH(z) = \lim_{\epsilon \rightarrow 0} \textup{Var} \, \FH_\epsilon (z) = \log r_{\UH}(z) - \log r_{\UH}(f_s(z)) + \log |f_s'(z)|,
\end{align}
where $\FH_\epsilon$ denotes the circle average of $\FH$ (the $\log |f_s'(z)|$ term comes from the change of variables dilating the ball of radius $\epsilon$ by a factor of $|f_s'(z)|$ as $\epsilon \rightarrow 0$). Since $\wt{h}^s = h^0 \circ \psi_s - \FH \circ \psi_s$ and arguing as in~\eqref{eq:scaling_quantum_length} it follows that
\begin{align*}
    &\lim_{\epsilon \rightarrow 0} \epsilon^{\wh{\alpha}} N_\epsilon^{\eta^s,h^0 \circ \psi_s + Q\log|\psi_s'|}(dz) \\
    &= \lim_{\epsilon \rightarrow 0} \epsilon^{\wh{\alpha}} N_{\epsilon |\psi_s'|^{-Q\frac{\gamma}{2}}}^{\eta^s,h^0 \circ \psi_s}(dz) \\
    &= \lim_{\epsilon \rightarrow 0} \epsilon^{\wh{\alpha}} N_{\epsilon |\psi_s'|^{-Q\frac{\gamma}{2}} e^{-\frac{\gamma}{2} \FH \circ \psi_s}}^{\eta^s,\wt{h}^s}(dz)\\
    &= |\psi_s'(z)|^{\wh{\alpha} Q\frac{\gamma}{2}} e^{\wh{\alpha} \frac{\gamma}{2} \FH \circ \psi_s(z)} \lim_{\epsilon \rightarrow 0} (\epsilon |\psi_s'(z)|^{-Q\frac{\gamma}{2}} e^{-\frac{\gamma}{2} \FH \circ \psi_s(z)})^{\wh{\alpha}} N_{\epsilon |\psi_s'|^{-Q\frac{\gamma}{2}} e^{-\frac{\gamma}{2} \FH \circ \psi_s}}^{\eta^s,\wt{h}^s}(dz) \\
    &= |\psi_s'(z)|^{\wh{\alpha} Q\frac{\gamma}{2}} e^{\wh{\alpha}\frac{\gamma}{2} \FH \circ \psi_s(z)} \sigma^{\eta^s,\wt{h}^s}(dz).
\end{align*}
Since $\FH$ is Gaussian with variance given by~\eqref{eq:C0var} and $\wh{\alpha} \gamma/2 = 2/\gamma$, we have that
\begin{align*}
    \E\!\left[ e^{\wh{\alpha} \frac{\gamma}{2} \FH \circ \psi_s(z)} \, \middle| \, \eta^0 \right] &= e^{\frac{2}{\gamma^2} (\log r_{\UH}(\psi_s(z)) - \log r_{\UH}(z) + \log |f_s'(\psi_s(z))|)} \\
    &= r_{\UH}(\psi_s(z))^{2/\gamma^2} r_{\UH}(z)^{-2/\gamma^2} |\psi_s'(z)|^{-2/\gamma^2} \\
    &= \frac{F(z)}{F(\psi_s(z))} |\psi_s'(z)|^{-2/\gamma^2}.
\end{align*}
Thus, noting that $\wh{\alpha} Q \frac{\gamma}{2} = 1 + \frac{4}{\gamma^2}$, it follows that 
\begin{align*}
    \mu^0|_{\eta^0([s,\infty))} \circ \psi_s(dz) &= F(\psi_s(z)) \E\!\left[ |\psi_s'(z)|^{\wh{\alpha} Q\frac{\gamma}{2}} e^{\wh{\alpha}\frac{\gamma}{2} \FH \circ \psi_s(z)} \sigma^{\eta^s,\wt{h}^s}(dz) \, \middle| \, \eta^0 \right] \\
    &= |\psi_s'(z)|^{1+ \frac{4}{\gamma^2}} F(\psi_s(z)) \E\!\left[ e^{\wh{\alpha}\frac{\gamma}{2} \FH \circ \psi_s(z)} \, \middle| \, \eta^0 \right] \E\!\left[ \sigma^{\eta^s,\wt{h}^s}(dz) \, \middle| \, \eta^0 \right] \\
    &= |\psi_s'(z)|^{1+\frac{2}{\gamma^2}} F(z) \E\!\left[ \sigma^{\eta^s,\wt{h}^s}(dz) \, \middle| \, \eta^s \right] \\
    &= |\psi_s'(z)|^{1+\frac{\kappa'}{8}} \wt{\mu}^s(dz),
\end{align*}
as was to be shown.
\end{proof}

\begin{rmk}
Define for each $b>0$ the function $\phi_b(z) = bz$. Following the same strategy as in Lemma~\ref{lem:measure_SLE} with $\phi_b$ in place of $\psi_s$, it is easy to see that if $\wh{\eta}(t) = b\eta(t/b^2)$, $\wh{h}$ is a zero-boundary GFF independent of $\eta$ and we let $\wh{\mu}(dz) = F(z)\E\!\left[ \sigma^{\wh{\eta},\wh{h}}(dz) \, \middle| \, \wh{\eta} \right]$, then $\mu$ and $\wh{\mu}$ have the same law and satisfy
\begin{align}\label{eq:conformal_covariance_scalar}
    \mu|_{\eta([s,t])} \circ \phi_b(dz) = b^{1+\frac{\kappa'}{8}} \wh{\mu}|_{\wh{\eta}([s/b^2,t/b^2])}(dz).
\end{align}
This, in turn, gives us that
\begin{align}\label{eq:conformal_covariance_intensity_scalar}
    \E[\mu] \circ \phi_b(dz) = b^{1+\frac{\kappa'}{8}} \E[\mu](dz)
\end{align}
for all $b > 0$.
\end{rmk}

We now turn to proving that $\mu^0$ is locally finite in the sense that we a.s.\ have that $\mu^0(\eta^0([s,t])) < \infty$ for all $0 \leq s < t < \infty$. We will accomplish this by first showing that $\E[\mu^0]$ has a density with respect to Lebesgue measure and then deducing the form of it.  We note that Lemma~\ref{lem:measure_SLE} implies that the assumptions of Theorem~\ref{thm:SLE_unique} are satisfied except for local finiteness.  If we knew that $\mu^0$ was locally finite, then Theorem~\ref{thm:SLE_unique} would apply and we would have that the density of $\E[\mu^0]$ with respect to Lebesgue measure is a positive multiple of the function $G_\kappa$ defined in Section~\ref{sec:preliminaries_natural_parameterization} as $G_\kappa$ is the density of the intensity of the natural parameterization.  The way that we will reason in what follows is that after showing that $\E[\mu^0]$ has a density with respect to Lebesgue measure we will prove that it is locally finite by showing that its density must be a multiple of $G_\kappa$.  At this point we will be able to apply Theorem~\ref{thm:SLE_unique} to see that $\mu^0$ must be (a multiple of) the natural parameterization.

\begin{lem}
\label{lem:intensity_SLE_continuity}
We have that $\E[\mu^0]$ is absolutely continuous with respect to Lebesgue measure.
\end{lem}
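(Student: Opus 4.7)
Since $F(z) = r_\UH(z)^{-2/\gamma^2}$ is continuous and locally bounded on $\UH$, multiplication by $F$ preserves absolute continuity with respect to Lebesgue measure, so it is enough to show that $\E[\sigma^{\eta^0,h^0}]$ is absolutely continuous with respect to Lebesgue measure. Lemma~\ref{lem:locally_finite_SLE} already tells us that this intensity is locally finite on $\UH$.

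The plan is to work from the bubble representation $\sigma^{\eta^0,h^0}(A) = \lim_{\epsilon\to 0}\epsilon^{\wh{\alpha}}N_\epsilon^{\eta^0,h^0}(A)$. The moment bounds driving the proof of Lemma~\ref{lem:locally_finite_disk} should upgrade to uniform integrability of $\epsilon^{\wh{\alpha}}N_\epsilon^{\eta^0,h^0}(K)$ for compact $K\subseteq\UH$; assuming this, one obtains $\E[\sigma^{\eta^0,h^0}(A\cap K)] = \lim_\epsilon \epsilon^{\wh{\alpha}}\E[N_\epsilon^{\eta^0,h^0}(A\cap K)]$ for every Borel $A\subseteq\UH$. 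It then suffices to establish a uniform-in-$\epsilon$ quantitative bound of the form $\epsilon^{\wh{\alpha}}\E[N_\epsilon^{\eta^0,h^0}(A)]\lesssim \mathrm{Leb}(A)^\delta$ for $A$ ranging over Borel subsets of compact subsets of $\UH$, for some $\delta>0$.

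For each fixed $\epsilon$, absolute continuity of $\E[N_\epsilon^{\eta^0,h^0}(\,\cdot\,)]$ combines three ingredients: the independence of $h^0$ from $\eta^0$; the conditional absolute continuity, given $\eta^0$, of the quantum boundary length of each bubble, which comes from the fact that the bubble boundaries are locally absolutely continuous with respect to $\SLE_\kappa$ processes with $\kappa=16/\kappa'\in(0,4)$; and the classical $\SLE_{\kappa'}$ hitting-density estimates recalled in Section~\ref{sec:preliminaries_natural_parameterization}, which imply that the representative points of the bubbles have an absolutely continuous distribution on $\UH$. Together these give that $\E[N_\epsilon^{\eta^0,h^0}(A)] = 0$ for any Lebesgue-null $A$ at fixed $\epsilon$.

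The main obstacle is the uniformity in $\epsilon$. My plan to get it is to combine the conformal covariance identity obtained by taking expectations in Lemma~\ref{lem:measure_SLE} conditionally on $\eta^0|_{[0,s]}$, namely
\begin{align*}
    \E\!\left[\mu^0|_{\eta^0([s,\infty))}\circ\psi_s \giv \eta^0|_{[0,s]}\right]\!(dz) \;=\; |\psi_s'(z)|^d\,\E[\mu^0](dz),
\end{align*}
with the scale covariance~\eqref{eq:conformal_covariance_intensity_scalar}. A cleaner alternative, which I would pursue first, is to use these two covariance properties directly to rule out a singular component: writing $\E[\mu^0]=\nu_{\mathrm{ac}}+\nu_{\mathrm{s}}$ in its Lebesgue decomposition, uniqueness of the decomposition forces $\nu_{\mathrm{s}}$ to inherit both the $\phi_b$-covariance $\nu_{\mathrm{s}}\circ\phi_b=b^d\nu_{\mathrm{s}}$ and the Loewner-flow identity above; I expect that simultaneously being $\phi_b$-covariant for all $b>0$ and compatible with the random maps $\psi_s$ is incompatible with being supported on a Lebesgue-null subset of $\UH$, forcing $\nu_{\mathrm{s}}\equiv 0$. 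Making this geometric incompatibility rigorous is the point where most of the work will be needed.
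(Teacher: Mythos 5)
Neither of your two routes is carried to completion, and the step you explicitly defer in the second route is exactly the content of the paper's proof. Note first that scale covariance alone cannot rule out a singular part: the measure on the ray $\{re^{i\theta_0} : r>0\}$ with density $r^{d-1}\,dr$ satisfies $m \circ \phi_b = b^d m$ for all $b>0$ and is singular with respect to planar Lebesgue measure, so any argument must use the random maps $\psi_s$ in an essential way, and saying that covariance under $\psi_s$ ``should be incompatible'' with singularity is precisely where the work lies. The paper's mechanism is concrete: take $A$ Lebesgue-null, reduce to $A \subseteq \UH + 2i$ (so that $f_t$ is defined on $A$ for $t \in [0,1]$ by the half-plane capacity bound), let $U$ be uniform on $[0,1]$ independent of everything, and use your displayed identity at time $U$ to write $\E[\mu^0](A) = \int \E\big[ |\psi_U'(z)|^{d} \one_{\psi_U(z) \in A}\big] \, d\E[\mu^0](z)$, using that $\mu^U \stackrel{d}{=} \mu^0$ is independent of $f_U$. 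It then suffices to show $\p[\psi_U(z) \in A] = 0$ for each fixed $z$, which is done by identifying $\psi_U(z)$ in law with the centered \emph{reverse} Loewner flow $\wt f_U(z)$, time-changing by $ds = |\wt f_t(z)|^{-2}dt$ so that $\arg \wt f_{t(s)}(z)$ solves an autonomous radial-Bessel-type SDE with a density at positive times, and then transferring absolute continuity from an independent exponential time back to the uniform time $U$ through the a.s.\ strictly increasing $C^1$ time change. None of this (the random-time averaging, the reverse-flow identification, the SDE/time-change density argument) appears in your plan, so the ``geometric incompatibility'' remains an unproved conjecture.

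Your first route has additional unsupported steps. The moment bounds of Lemmas~\ref{lem:rectangle_bound} and~\ref{lem:locally_finite_disk} are $p$th-moment bounds with $p<1$ (and bounds on conditional intensities), which do not yield uniform integrability of $\epsilon^{\wh{\alpha}} N_\epsilon^{\eta^0,h^0}(K)$; small moments never imply uniform integrability, so the interchange $\E[\sigma^{\eta^0,h^0}(A\cap K)] = \lim_\epsilon \epsilon^{\wh{\alpha}}\E[N_\epsilon^{\eta^0,h^0}(A\cap K)]$ is not justified by what you cite. The proposed uniform-in-$\epsilon$ bound $\epsilon^{\wh{\alpha}}\E[N_\epsilon^{\eta^0,h^0}(A)] \lesssim \mathrm{Leb}(A)^{\delta}$ is a quantitative modulus of absolute continuity, strictly stronger than the lemma itself, and you give no mechanism for producing it. Finally, the fixed-$\epsilon$ claim that the bubble representative points have absolutely continuous laws does not follow from Section~\ref{sec:preliminaries_natural_parameterization}, which recalls the Green's function of the natural parameterization (the object being constructed), not hitting densities for the locations of bubbles of prescribed quantum boundary length; and even granted the fixed-$\epsilon$ statement, absolute continuity does not pass to the limit without the uniformity you acknowledge is missing.
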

\begin{proof}
Suppose that $A \subseteq \UH$ has zero Lebesgue measure.  We want to show that $\E[\mu^0](A) = 0$.  It suffices to show that $\E[ \mu^0](A \cap (\UH + ir)) =0$ for each $r > 0$.  We may therefore assume that $A \subseteq \UH+ir$.  We will prove the result in the case that $r=2$.  The result for other values of $r > 0$ follows from the same argument.  By \cite[Lemma~1]{ll2009geo} we have that $2 = \hcap(\eta([0,1])) \geq \sup\{\im(z) : z \in \eta([0,1])\}^2/2$.  That is, $\sup\{\im(z) : z \in \eta([0,1])\} \leq 2$.  Therefore $f_t(z)$ is defined for all $z \in A$ and $t \in [0,1]$.  Let~$U$ be uniform in $[0,1]$ independently of everything else.  By~\eqref{eq:confcovSLE} we have that
\[ \E[ \mu^0](A) = \E\left[ \int |\psi_U'(z)|^{d} \one_{z \in f_U(A)} d\mu^U(z) \right] = \int \E[ |\psi_U'(z)|^{d} \one_{z \in f_U(A)}] d\E[\mu^0(z)]\]
where in second equality we used that $\mu^U \stackrel{d}{=} \mu^0$ and that $\mu^U$ is independent of $f_U$.  To show that the right hand side is equal to zero, it suffices to show that $\p[ \psi_U(z) \in A] = 0$ for each $z \in \UH$.  Let $(\wt{f}_t)$ solve the centered reverse Loewner equation with driving function $\wt{W}_t = \sqrt{\kappa} \wt{B}_t$ where $\wt{B}$ is a standard Brownian motion.  That is,
\[ d\wt{f}_t(z) = -\frac{2}{\wt{f}_t(z)} dt - d\wt{W}_t,\quad \wt{f}_0(z) = z.\]
Then we have that $\psi_t(z) \stackrel{d}{=} \wt{f}_t(z)$ for all $t$.  Consequently, it suffices to show that $\p[ \wt{f}_U(z) \in A] = 0$ for each $z \in \UH$.  Let $\wt{Z}_t = \wt{f}_t(z)$, $\wt{\Theta}_t = \arg \wt{Z}_t$, and $\wt{I}_t = \log \im (\wt{Z}_t)$.  We will complete the proof by determining the SDE solved by $\wt{Z}_t$ after performing a certain time change (which will follow very closely from the calculations given in the proof of \cite[Proposition~3.8]{dms2014mating}).   We have that
\begin{equation}
d \wt{Z}_t = -\frac{2}{\wt{Z}_t} dt - d\wt{W}_t.
\end{equation}
Then we have that
\[ d \im (\wt{Z}_t) = - \im\left(\frac{2}{\wt{Z}_t}\right)dt.\]
This implies that
\[ d \wt{I}_t = d \log \im( \wt{Z}_t ) = -\frac{1}{\im(\wt{Z}_t)} \im\left(\frac{2}{\wt{Z}_t} \right) = \frac{2}{|\wt{Z}_t|^2} dt.\]
We also have that
\begin{align*}
  d \log \wt{Z}_t &= -\frac{\sqrt{\kappa}}{\wt{Z}_t} d\wt{B}_t - \frac{2+\kappa/2}{\wt{Z}_t^2} dt \quad\text{and}\\
  d \wt{\Theta}_t &= -\im\left( \frac{\sqrt{\kappa}}{\wt{Z}_t}\right) d\wt{B}_t - \im\left( \frac{2+\kappa/2}{\wt{Z}_t^2} \right) dt.
\end{align*}
We now reparameterize time by letting $ds = |\wt{Z}_t|^{-2} dt$.  Then we have that $d \wt{I}_{t(s)} = 2 ds$ and there exists a Brownian motion $\wh{B}$ so that $d\wh{B}_s = d\wt{B}_t/|\wt{Z}_t|$.  Altogether, we have that
\[ d \wt{\Theta}_{t(s)} = \sqrt{\kappa} \sin(\wt{\Theta}_{t(s)}) d \wh{B}_s + \left(2+ \frac{\kappa}{2}\right) \sin(2\wt{\Theta}_{t(s)}) ds.\]
In particular, it is not difficult to see for $s > 0$ that the law of $\wt{\Theta}_{t(s)}$ has a density with respect to Lebesgue measure.  Thus if $S \sim \exp(1)$ independently of everything else then the law of $\wt{Z}_{t(S)}$ has a density with respect to Lebesgue measure so that $\p[ \wt{Z}_{t(S)} \in A] = 0$.  Note that the conditional law of
\[ t^{-1}(U) = \int_0^U \frac{1}{|\wt{Z}_u|^2} du\]
given $\wt{W}$ is absolutely continuous with respect to Lebesgue measure on $\R_+$ as $s \mapsto t^{-1}(s) = \int_0^s |\wt{Z}_u|^{-2} du$ is a.s.\ strictly increasing and is a.s.\ $C^1$.  Therefore the conditional law of $t^{-1}(U)$ given~$\wt{W}$ is absolutely continuous with respect to the law of~$S$ (which is the same as the law of~$S$ given~$\wt{W}$ by independence).  Therefore the law of the pair $(\wt{W},t^{-1}(U))$ is absolutely continuous with respect to the law of the pair $(\wt{W},S)$.  Since $\wt{Z}_U$ is obtained from $(\wt{W},t^{-1}(U))$ by applying the same measurable function which constructs $\wt{Z}_{t(S)}$ from $(\wt{W},S)$, this implies that the law of $\wt{Z}_U$ is absolutely continuous with respect to the law of $\wt{Z}_{t(S)}$.  Therefore we also have that $\p[\wt{Z}_U \in A] = 0$.
\end{proof}

In the next lemma, we note that by Lemma~\ref{lem:intensity_SLE_continuity}, \eqref{eq:conformal_covariance_intensity_scalar} determines the structure of the radial part of~$\mu^0$.

\begin{lem}
For each $b>0$, let $\phi_b(z) = bz$. Let $m$ be a measure on $\UH$ which is absolutely continuous with respect to Lebesgue measure and satisfies
\begin{align*}
    m \circ \phi_b(dz) = b^d m(dz)
\end{align*}
for some $d>1$ and all $b > 0$. Then there exists some function $H(z) = H(\arg z)$ such that
\begin{align*}
    m(dz) = H(\arg z) \im(z)^{d-2} dz.
\end{align*}
\end{lem}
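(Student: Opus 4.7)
The plan is to use absolute continuity to reduce the claim to a pointwise scaling identity for the density, convert scaling into translation via logarithmic polar coordinates, and then conclude by a translation-invariance argument.

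First, by absolute continuity, I would write $m(dz) = \rho(z)\,dz$ for some nonnegative measurable $\rho$. Testing the hypothesis $m\circ\phi_b = b^d m$ against Borel sets and changing variables ($\phi_b$ has Jacobian $b^2$) yields $\rho(bz) = b^{d-2}\rho(z)$ for Lebesgue a.e.\ $z\in\UH$, for each fixed $b>0$.

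Next, I would pass to logarithmic polar coordinates $z = e^{u+i\theta}$ with $(u,\theta) \in \R \times (0,\pi)$. Under this change of variables, $dz = e^{2u}\,du\,d\theta$ and the scaling $z\mapsto bz$ is simply the translation $u\mapsto u+\log b$. Writing $\wt{\rho}(u,\theta) = \rho(e^{u+i\theta})e^{2u}$, the scaling relation becomes
\[ \wt{\rho}(u+c,\theta) = e^{dc}\,\wt{\rho}(u,\theta) \quad\text{for a.e.\ }(u,\theta),\ \text{every }c\in\R. \]
Setting $g(u,\theta) = e^{-du}\wt{\rho}(u,\theta)$, this simplifies to $g(u+c,\theta) = g(u,\theta)$ a.e.\ in $(u,\theta)$, for every $c\in\R$.

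The main, mild, technical step is to upgrade this ``pointwise-in-$c$, a.e.-in-$(u,\theta)$'' translation invariance to the statement that $g$ is a.e.\ independent of $u$. I would proceed by Fubini: there is a null set $N\subseteq (0,\pi)$ such that for every $\theta\notin N$, the slice $u\mapsto g(u,\theta)$ is locally integrable and $g(u+c,\theta) = g(u,\theta)$ holds for a.e.\ $(u,c)\in\R^2$. A standard one-dimensional argument (for instance, convolving $g(\cdot,\theta)$ against a smooth compactly supported bump and observing the convolution is translation-invariant hence constant, then letting the bump shrink to use Lebesgue differentiation) shows $u\mapsto g(u,\theta)$ is a.e.\ equal to a constant $H(\theta)$ for such $\theta$. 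Define $H$ arbitrarily (say, equal to $0$) on $N$; measurability follows from the construction.

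Finally, I would translate back: since $g(u,\theta) = e^{(2-d)u}\rho(e^{u+i\theta})$, the identity $g(u,\theta) = H(\theta)$ gives $\rho(z) = |z|^{d-2}H(\arg z)$ for a.e.\ $z\in\UH$. Using $\im(z) = |z|\sin(\arg z)$, this is $\rho(z) = \im(z)^{d-2} \cdot \sin(\arg z)^{2-d}H(\arg z)$, and redefining the angular function as $\wt{H}(\theta) = \sin(\theta)^{2-d}H(\theta)$ gives the required representation $m(dz) = \wt{H}(\arg z)\,\im(z)^{d-2}\,dz$. The only subtle point in the whole argument is the translation-invariance upgrade in the third paragraph, but this is a routine application of Fubini and approximation by smooth functions.
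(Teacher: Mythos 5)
Your proof is correct and follows essentially the same route as the paper: both reduce the covariance hypothesis to the pointwise scaling relation $f(bz)=b^{d-2}f(z)$ for the Lebesgue density and then read off that the density has the form $\wh{H}(\arg z)|z|^{d-2} = H(\arg z)\im(z)^{d-2}$. The only difference is that you carefully upgrade the almost-everywhere scaling identity (via log-polar coordinates, Fubini in the translation parameter, and a convolution/differentiation argument) to a genuine representation, a measure-theoretic point the paper's shorter Radon--Nikodym manipulation leaves implicit.
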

\begin{proof}
Let $f(z)$ denote the density of $m$ with respect to Lebesgue measure. Then
\begin{align*}
    \frac{d(m \circ \phi_b)}{dz}(z) = f(\phi_b(z)) |\phi_b'(z)|^2 = b^2 f(bz)
\end{align*}
and consequently
\begin{align}\label{eq:RN_1}
    \frac{d(m \circ \phi_b)}{dm}(z) = \frac{\frac{d(m \circ \phi_b)}{dz}(z)}{\frac{dm}{dz}(z)} = \frac{b^2 f(bz)}{f(z)}.
\end{align}
Moreover, by the statement of the lemma, we have that
\begin{align}\label{eq:RN_2}
    \frac{d(m \circ \phi_b)}{dm}(z) = b^d.
\end{align}
By combining \eqref{eq:RN_1} and \eqref{eq:RN_2}, we have that
\begin{align*}
    f(bz) = b^{d-2} f(z).
\end{align*}
This implies that $f(z) = \wh{H}(\arg z)|z|^{d-2}$ for some function $\wh{H}$. Since $\im(z) = |z| \sin(\arg z)$ this proves the lemma.
\end{proof}

We now turn to determining the form of the function $H$ in the case $m = \E[\mu^0]$.

\begin{prop}
\label{prop:density_intensity_SLE}
There is some constant $c>0$ such that
\begin{align*}
    \E[\mu^0(dz)] = c
\sin^{\frac{8}{\kappa}-1}(\arg z) \im(z)^{d-2} dz.
\end{align*}
\end{prop}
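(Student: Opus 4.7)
The plan is to identify $H$ by deriving an ODE for it from the conformal covariance established in Lemma~\ref{lem:measure_SLE}, exhibiting $\sin^{\frac{8}{\kappa}-1}(\cdot)$ as an explicit solution, and then invoking the reflection symmetry of $\SLE_\kappa$ to pin down $H$ up to a multiplicative constant.

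To get started, fix a compact set $B \subseteq \UH$ at positive distance from the origin. For $s > 0$ sufficiently small, $\eta^0([0,s]) \cap B = \emptyset$ almost surely, so $\mu^0|_{\eta^0([0,s])}(B) = 0$. Writing $G(w) \coloneqq H(\arg w)\im(w)^{d-2}$ and using Lemma~\ref{lem:measure_SLE} together with the fact that $\wt{\mu}^s$ has the same law as $\mu^0$ and is independent of $f_s$, I expect to obtain the integral identity
\[
G(w) = \E\!\left[|f_s'(w)|^{2-d} G(f_s(w))\, \one_{T_w > s}\right]
\]
for a.e.\ $w \in \UH$ and all sufficiently small $s > 0$. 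Differentiating at $s = 0$ using It\^{o}'s formula, together with $df_s(w) = (2/f_s(w))\,ds - dW_s$ and $d\log|f_s'(w)| = -\re(2/f_s(w)^2)\,ds$, then gives the PDE
\[
\frac{\kappa}{2}G_{xx}(z) + \re\!\left(\frac{2}{z}\right)G_x(z) + \im\!\left(\frac{2}{z}\right)G_y(z) - (2-d)\re\!\left(\frac{2}{z^2}\right)G(z) = 0, \quad z \in \UH.
\]

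Substituting $G(z) = H(\arg z)\im(z)^{d-2}$ and writing things in polar coordinates $z = re^{i\theta}$, the homogeneity in $r$ causes the $r$-dependence to drop out and the PDE reduces to the ODE
\[
\frac{\kappa}{2}H''(\theta) + (\kappa - 4)\cot\theta\, H'(\theta) = \frac{\kappa - 8}{2}H(\theta), \quad \theta \in (0,\pi),
\]
for which a direct check confirms that $H_1(\theta) = \sin^{\frac{8}{\kappa}-1}\theta$ is a solution. To cut down the two-dimensional solution space I will use that the map $z \mapsto -\bar{z}$ is an anti-conformal automorphism of $\UH$ fixing $0$ and $\infty$ under which $W \mapsto -W$; this preserves the joint law of $\eta^0$ and an independent zero-boundary GFF, and hence the law of $\mu^0$, forcing $H(\pi - \theta) = H(\theta)$. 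The two-dimensional solution space of the ODE is preserved by $\theta \mapsto \pi - \theta$, and if both solutions were symmetric, then $H'(\pi/2) = 0$ for every solution, making the Wronskian vanish at $\theta = \pi/2$; but Abel's identity gives that this Wronskian is a nonzero multiple of $\sin^{-2(\kappa-4)/\kappa}\theta$, which is nonzero on $(0,\pi)$. Hence the symmetric subspace is exactly one-dimensional and spanned by $H_1$, yielding $H = cH_1$ with $c > 0$ from the a.s.\ non-triviality of $\mu^0$.

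The main technical obstacle will be rigorously justifying the differentiation step and obtaining enough regularity on $G$ to pass from the integral identity to the PDE. I expect this can be handled by the radial/angular time change $ds = |f_t(w)|^{-2}dt$ already used in the proof of Lemma~\ref{lem:intensity_SLE_continuity}: under this time change $\log\im f_{t(s)}(w)$ is deterministic and linear in $s$, while the angular process $\Theta_{t(s)} = \arg f_{t(s)}(w)$ is a non-degenerate one-dimensional diffusion on $(0,\pi)$ with smooth generator. Consequently the integral identity becomes a zero-drift condition for a scalar functional of a one-dimensional diffusion, from which the ODE for $H$ follows directly, circumventing any need for two-dimensional elliptic regularity for $G$.
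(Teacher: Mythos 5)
Your overall strategy is the same as the paper's (conformal covariance $\Rightarrow$ a second--order ODE in the angle, then reflection symmetry to pin down $H$ up to a constant; your ODE and the solution $\sin^{\frac{8}{\kappa}-1}$ agree with \eqref{eq:ODE}), but two steps are genuinely gapped. First, the starting identity is wrong as stated. For a fixed deterministic $s>0$ it is \emph{not} true that $\eta^0([0,s])\cap B=\emptyset$ a.s.\ (there is positive probability that the curve reaches any fixed compact set by capacity time $s$), and more importantly the exact relation is
\[
G(w)\,dw \;=\; \E\bigl[\mu^0|_{\eta^0([0,s])}(dw)\bigr] \;+\; \E\bigl[|f_s'(w)|^{2-d}G(f_s(w))\one_{T_w>s}\bigr]dw ,
\]
and the first term is a nonzero absolutely continuous measure (its total mass is $\E[\mu^0(\eta^0([0,s]))]>0$), so your displayed identity is only an inequality. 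The paper sidesteps this by stopping at $\tau_z^\delta$, the time the conformal radius of $z$ drops to $\delta$: before that time the accumulated mass lives at distance $\gtrsim\delta$ from $z$ and contributes nothing to the density at $z$. To salvage your fixed-$s$ version you would have to show the omitted density is $o(s)$ locally, which requires estimates (e.g.\ second moments of $\mu^0$) not available at this point of the argument.

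Second, and this is the heart of the matter: the differentiation/It\^o step presupposes that $G$ (equivalently $H$) is $C^2$, while a priori $H$ is only an a.e.-defined density, not even known to be locally bounded or continuous. Essentially the entire body of the paper's proof is devoted to exactly this: producing a continuous version of $H$ from the continuity of the conditional-intensity process, proving $H$ is bounded via Fatou after weighting by the local martingale $M_t^z$ (so that the curve becomes an $\SLE_\kappa(\kappa-8)$), and then proving smoothness by representing $\Phi$ against the smooth transition density of the radial Bessel process. Your closing claim that, after the time change, the identity ``becomes a zero-drift condition for a scalar functional of a one-dimensional diffusion, from which the ODE for $H$ follows directly'' skips precisely this work: a zero-drift statement for a merely measurable, possibly unbounded $H$ does not directly yield a classical ODE; one must first upgrade regularity (e.g.\ via the smoothing of the angular diffusion's transition kernel, which is what the paper does) and establish enough integrability to justify optional stopping/differentiation under the expectation. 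Note also that your Wronskian/symmetry finish already needs $H\in C^1$, so it too rests on the missing regularity step. The finish itself is fine, and arguably cleaner than the paper's phrasing of the uniqueness of solutions to \eqref{eq:ODE}, but as written the proposal defers rather than supplies the proof's main content.
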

\begin{proof}
Denote by $\Phi(z) = H(\arg z) \im(z)^{d-2}$ the density of $\E[\mu^0]$ with respect to Lebesgue measure. As above, let $f_t(z) = X_t^z + i Y_t^z$.  Let $\Upsilon_t^z = Y_t^z/|f_t'(z)|$ and note that $\Upsilon_t^z$ is half the conformal radius of $z$ in $\UH \setminus \eta([0,t])$, which is decreasing in $t$.  Let $\Theta_t^z = \arg f_t(z)$. By~\eqref{eq:confcovSLE} we have for all $z \in \UH$ that
\begin{align}
\label{eqn:mu_meas_cond_exp}
    \E[ \mu^0(dz) \giv \eta|_{[0,t]} ] = |f_t'(z)|^{2-d} \Phi(f_t(z)) dz = H(\Theta_t^z) (\Upsilon_t^z)^{d-2} dz.
\end{align}
We note that the filtration generated by $\eta$ is the same as the filtration generated by the driving Brownian motion.  In particular, the right hand side above is a martingale adapted to a filtration generated by a Brownian motion and therefore has a continuous modification.  We shall assume that we are working with this modification.  Since the process $(\Upsilon_t^z)^{d-2}$ is continuous in $t$ and positive (up until $z$ is swallowed) it follows that $t \mapsto H(\Theta_t^z)$ is also continuous (up until $z$ is swallowed).

We will argue that $H|_{[\arg z,\pi)}$ has a continuous version; the same argument implies that $H|_{(0,\arg z]}$ has a continuous version (and the two versions agree at $\arg z$ so $H$ has a continuous version).  Let $\tau$ be the time that $z$ is swallowed and assume that we are working on the event that $\Theta_\tau^z = \pi$.  We first note that there are a.s.\ only countably many times $t_0$ so that $\Theta^z$ has a local minimum or maximum at the time $t_0$.  Then as $\Theta^z$ a.s.\ hits every $\theta \in [\arg z,\pi)$ uncountably many times (\cite[Theorem~6.40]{mp2010brownian}, using that $\Theta^z$ is up to time change locally absolutely continuous with respect to a standard Brownian motion) it is a.s.\ the case that for every $\theta \in [\arg z,\pi)$ there is a time $t_\theta$ so that $\Theta_{t_\theta}^z = \theta$ and $\Theta^z$ does not have a local minimum or maximum at the time $t_\theta$.  Let $t_\theta'$ be any other time so that $\Theta_{t_\theta'}^z = \theta$.  We claim that $H_{t_\theta} = H_{t_\theta'}$.  To see this, we first note that it is a.s.\ the case that $\Theta^z$ is not constant on any open interval of time.  Thus by the continuity of $\Theta^z$ and since $H_t = H(\Theta_t^z)$ for a.e.\ $t \in [0,\tau]$ we can find sequences $(a_n)$, $(a_n')$ so that $a_n \to t_\theta$ and $a_n' \to t_\theta'$, $\Theta_{a_n}^z = \Theta_{a_n'}^z$ for all $n \in \N$, and $H_{a_n} = H(\Theta_{a_n}^z)$, $H_{a_n'} = H(\Theta_{a_n'}^z)$ for all $n \in \N$.  That is, $H_{a_n} = H_{a_n'}$ for all $n \in \N$ hence $H_{t_\theta} = H_{t_\theta'}$.  For $\theta \in [\arg z,\pi)$ we then define $\wt{H}(\theta) = H_{t_\theta}$.  Then we have that $\wt{H}(\Theta_t^z) = H_t$ for all $t \geq 0$ and, arguing as above, it is not difficult to see that $\wt{H}$ is continuous.  We also note that $\wt{H}$ is not random.  Indeed, suppose that $(H_t^1, \Theta_t^{z,1})$, $(H_t^2, \Theta_t^{z,2})$ are independent copies of $(H_t, \Theta_t^z)$ conditioned on $\Theta_t^{z,i}$ hitting $\pi$ before $0$.  Then if we define $t_\theta^i$ for $(H_t^i, \Theta_t^{z,i})$, $i=1,2$, in the same way that we defined $t_\theta$ for $(H_t, \Theta_t^z)$ we a.s.\ have that $H_{t_\theta^1}^1 = H_{t_\theta^2}^2$ for all $\theta \in [\arg z,\pi)$.  That is, the function $\wt{H}$ produced from $(H_t^1, \Theta_t^{z,1})$ a.s.\ agrees with the function produced from $(H_t^2, \Theta_t^{z,2})$.  It is left to show that $\wt{H}$ is a version of $H |_{[\arg z,\pi)}$, i.e., $\wt{H}(\theta)  = H(\theta)$ for a.e. \ $\theta \in [\arg z,\pi)$.  We note that $H_1 = \wt{H}(\Theta_1^z)$ is equal to $H(\Theta_1^z)$ with probability $1$.  As the law of $\Theta_1^z$ has a positive density on $[\arg z,\pi)$ with respect to Lebesgue measure, we conclude that $\wt{H}$ is a version of $H|_{[\arg z,\pi)}$.

Since the measure defined by the density $H(\arg z) \im(z)^{d-2}$ with respect to Lebesgue measure is not affected by changing $H$ on a set of Lebesgue measure zero we may thus assume that $H$ is continuous.  Fix some $\delta > 0$ and let $\tau_z = \tau_z^\delta = \inf \{t \geq 0: \Upsilon_t^z \leq \delta \}$.  Fix $r, t > 0$ and let $\sigma_z = \tau_z \wedge \inf\{s \geq 0: H(\Theta_s^z) \geq r\}$.  Note that this is a stopping time by the continuity of $s \mapsto H(\Theta_s^z)$.  Let also $\sigma_z(t) = t \wedge \sigma_t$.

Taking expectations of~\eqref{eqn:mu_meas_cond_exp} at the time $\sigma_z(t)$ we have that
\begin{align}\label{eq:density_expectation1}
    \Phi(z) = \E[ H(\Theta_{\sigma_z(t)}^z) (\Upsilon_{\sigma_z(t)}^z)^{d-2} ].
\end{align}
Next, we consider the continuous local martingale \cite{sw2005coordinate}
\begin{align*}
    M_t^z = (Y_t^z)^{d-2} \sin(\Theta_t^z)^{\frac{8}{\kappa}-1} |f_t'(z)|^{2-d} = (\Upsilon_t^z)^{d-2} \sin(\Theta_t^z)^{\frac{8}{\kappa}-1}.
\end{align*}
Let $\p_z^*$ denote the measure obtained by weighting the underlying probability measure $\p$ by $M_t^z$. By \cite{sw2005coordinate}, under the measure $\p_z^*$, $\eta$ has the law of an $\SLE_\kappa(\kappa-8)$ process in $\UH$ with force point at $z$.  Note that if we denote by $\E_z^*$ the expectation with respect to $\p_z^*$, then \eqref{eq:density_expectation1} implies that
\begin{align}
\label{eqn:phi_formula}
    \Phi(z) &= \E\!\left[ H(\Theta_{\sigma_z(t)}^z) (\Upsilon_{\sigma_z(t)}^z)^{d-2}\right] = M_0^z \E_z^*\!\left[H(\Theta_{\sigma_z(t)}^z)/\sin(\Theta_{\sigma_z(t)}^z)^{\frac{8}{\kappa}-1}\right].
\end{align}
Next, we shall prove that $\Phi$ is a smooth function. 

Let $a = 2/\kappa$.  We parameterize $\eta$ so that
\begin{align*}
    \partial_t g_t(z) = \frac{a}{g_t(z) - B_t}, \quad g_0(z) = z.
\end{align*}
Then under the measure $\p_z^*$, $\Theta_t^z$ satisfies
\begin{align*}
    d \Theta_t^z = \frac{2a X_t^z Y_t^z}{((X_t^z)^2 + (Y_t^z)^2)^2} dt + \frac{Y_t^z}{(X_t^z)^2+(Y_t^z)^2} dW_t,
\end{align*}
where $W$ is a $\p_z^*$-Brownian motion. Let $\wt{t}(s)$ be the solution to
\begin{align*}
    s = \int_0^{\wt{t}(s)} \frac{(Y_u^z)^2}{((X_u^z)^2 + (Y_u^z)^2)^2} du.
\end{align*}
Then $\wt{\Upsilon}_s^z = \Upsilon_{\wt{t}(s)}^z$ satisfies $d \wt{\Upsilon}_s^z = -2a \wt{\Upsilon}_s^z ds$, that is, $\wt{\Upsilon}_s^z = \wt{\Upsilon}_0^z e^{-2as} = \im(z) e^{-2as}$. Moreover, letting $\wt{\Theta}_s^z = \Theta_{\wt{t}(s)}^z$ we have that $\wt{\Theta}_s^z$ under $\p_z^*$ satisfies
\begin{align*}
    d \wt{\Theta}_s^z = 2a \cot(\wt{\Theta}_s^z)ds + d\wt{W}_s, \quad \wt{\Theta}_0^z = \arg z,
\end{align*}
where $\wt{W}_s$ is a standard Brownian motion. Thus, under $\p_z^*$, $\wt{\Theta}_s^z$ has the law of a radial Bessel process of dimension $4a+1$, and thus, by~\cite[Proposition~B.1]{zhan2016ergodicity}, has transition density given by
\begin{align*}
    p_s(x,y) = \sin(y) \sum_{n=0}^\infty \frac{(1-\cos^2(y))^{2a-1/2} C_n^{(2a)}(\cos(x)) C_n^{(2a)}(\cos(y))}{\int_{-1}^1 (1-u^2)^{2a-1/2} C_n^{(2a)}(u)^2 du} \exp\!\left(-\frac{n}{2}(n+4a)t \right),
\end{align*}
where $C_n^{(\alpha)}$ denotes the Gegenbauer polynomial with degree $n$ and index $\alpha$, see~\cite{emot1953functions2}.  Thus, the function $q(z) = p_s(\arg z,y)$ is smooth.  Note that $\rho=\kappa-8 < \kappa/2-2$ for $\kappa \in (4,8)$ so that $\tau_z^\delta$ is $\p_z^*$-a.s.\ finite.  We assume that $\delta \in (0,\Upsilon_0^z)$.  Taking a limit as $t \to \infty$ and then $\delta \to 0$ in~\eqref{eqn:phi_formula} and applying Fatou's lemma we see that
\begin{align*}
\Phi(z)
&= \liminf_{\delta \to 0} \liminf_{t \to \infty} M_0^z\E_z^*\!\left[H(\Theta_{\sigma_z(t)}^z)/\sin(\Theta_{\sigma_z(t)}^z)^{\frac{8}{\kappa}-1}\right]\\
&\geq M_0^z \E_z^*\!\left[\liminf_{\delta \to 0} \liminf_{t \to \infty} H(\Theta_{\sigma_z(t)}^z)/\sin(\Theta_{\sigma_z(t)}^z)^{\frac{8}{\kappa}-1}\right].
\end{align*}
Since $\sin(\theta)^{1-8/\kappa} \geq 1$ for $\kappa \in (4,8)$ and $\theta \in (0,\pi)$ we have that the right hand side is bounded from below by $r \wedge \sup_{\theta \in (0,\pi)} H(\theta)$.  By taking a limit as $r \to \infty$ we thus see that $H$ is bounded by $\Phi(z) / M_0^z$.  Altogether, this implies that $H(\wt{\Theta}_s^z) / \sin(\wt{\Theta}_s^z)^{\frac{8}{\kappa}-1}$ is a continuous $\p_z^*$-martingale which is in $L^p$ for some $p > 1$ (as $\sin(\wt{\Theta}_s^z)^{1-\frac{8}{\kappa}}$ has a finite $p$th $\p_z^*$-moment for some $p > 1$ and we have just argued that $H$ is bounded).  Thus we can take a limit as $t \to \infty$ and $r \to \infty$ to see that
\[ \Phi(z) = M_0^z \E_z^*\!\left[H(\wt{\Theta}_s^z)/\sin(\wt{\Theta}_s^z)^{\frac{8}{\kappa}-1}\right]\]
where $s$ is such that $\wt{\Upsilon}_s^z = \delta$; note that $s > 0$.  As $p_s(\arg z,y)$ is smooth, this implies that $\Phi(z)$ is smooth as well.  Since $\Phi$ and $\im(z)$ are smooth in $\UH$, so is $H(z)$.

Moreover, by~\eqref{eqn:mu_meas_cond_exp} we have that
\begin{align*}
    \Phi(f_t(z)) |f_t'(z)|^{2-d} = H(\Theta_t^z) (\Upsilon_t^z)^{d-2}
\end{align*}
is a continuous local martingale. By the symmetry of $\SLE$, as well as the zero-boundary GFF, it follows that $H(\tfrac{\pi}{2} + \theta) = H(\tfrac{\pi}{2} - \theta)$ for all $\theta \in (0,\tfrac{\pi}{2})$. Since $H$ is smooth, it follows that $H'(\tfrac{\pi}{2}) = 0$ and this together with It\^{o}'s formula (since $H(\Theta_t^z) (\Upsilon_t^z)^{d-2}$ is a local martingale), implies that
\begin{align}\label{eq:ODE}
    H''(\theta) + \left( 2 - \frac{8}{\kappa} \right) \cot(\theta)H'(\theta) + \left( \frac{8}{\kappa} - 1 \right) H(\theta) = 0, \quad H\!\left(\frac{\pi}{2} \right) = c, \quad H'\!\left(\frac{\pi}{2} \right) = 0,
\end{align}
for some $c>0$. By a standard uniqueness result for linear second-order differential equations, there exist two linearly independent solutions to~\eqref{eq:ODE}, namely $H \equiv 0$ and $H(\theta) = c \sin^{\frac{8}{\kappa}-1}(\theta)$. Thus, since $H$ is clearly not $0$, the result follows.
\end{proof}

As a consequence of Proposition~\ref{prop:density_intensity_SLE}, we have the local finiteness of $\mu^0$, as a measure on $\eta$.

\begin{lem}
Almost surely, $\mu^0$ is locally finite.  That is,
\begin{align*}
    \mu^0(\eta([s,t])) < \infty \quad\text{for all}\quad 0 < s < t \quad\text{a.s.}
\end{align*}
\end{lem}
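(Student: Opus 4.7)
The plan is to combine the explicit formula for the intensity of $\mu^0$ from Proposition~\ref{prop:density_intensity_SLE} with a deterministic containment of $\eta([s,t])$ in a bounded subset of $\overline{\UH}$. The key observation is that the density $\Phi(z) = c\sin^{\frac{8}{\kappa}-1}(\arg z)\im(z)^{d-2}$ is locally integrable on all of $\overline{\UH}$ (not merely on compact subsets of the open half-plane), so that the expected $\mu^0$-mass of any bounded subset of $\overline{\UH}$ is finite.

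First, for each $M>0$ let $K_M \coloneqq \ol{B(0,M)} \cap \ol{\UH}$. Proposition~\ref{prop:density_intensity_SLE} gives
\begin{align*}
\E[\mu^0(K_M)] = \int_{K_M} c\sin^{\frac{8}{\kappa}-1}(\arg z)\im(z)^{d-2}\,dz.
\end{align*}
Passing to polar coordinates $z = re^{i\theta}$, this factors as $c\int_0^M r^{d-1}\,dr \cdot \int_0^\pi \sin^{\frac{8}{\kappa}+d-3}(\theta)\,d\theta$. The first factor is finite since $d > 1 > 0$; the second is finite provided $\frac{8}{\kappa}+d-3 > -1$, i.e., $d > 2-\frac{8}{\kappa}$. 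Plugging in $d = 1+\frac{2}{\kappa}+\frac{3\kappa}{32}$ and clearing denominators, this reduces to $3\kappa^2 - 32\kappa + 320 > 0$; the quadratic has negative discriminant and hence is strictly positive for every $\kappa \in (4,8)$. Therefore $\E[\mu^0(K_M)] < \infty$, and in particular $\mu^0(K_M) < \infty$ almost surely.

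Next, fix $0 < s < t$. Since $\eta$ is a continuous curve on $[s,t]$, $\eta([s,t])$ is a compact subset of $\ol{\UH}$, so almost surely there is a (random) integer $M$ with $\eta([s,t]) \subseteq K_M$. On this event, $\mu^0(\eta([s,t])) \leq \mu^0(K_M) < \infty$. A union bound over $M \in \N$ yields $\mu^0(\eta([s,t])) < \infty$ a.s.\ for each fixed pair $0 < s < t$. To obtain the statement simultaneously for all $0 < s < t$, observe that if $s < s' \le t' < t$ with $s', t'$ rational, then $\mu^0(\eta([s',t'])) \le \mu^0(\eta([s,t]))$, and hence taking a countable intersection over rational pairs combined with monotonicity of the map $(s,t)\mapsto \mu^0(\eta([s,t]))$ upgrades the almost sure finiteness to hold simultaneously.

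The only non-routine step is verifying the local integrability of $\Phi$ near the boundary, which is where the explicit exponents from~\eqref{eq:CLEdim} become essential; the containment in $K_M$ and the passage from expectation to almost sure finiteness are standard. Note also that one does not need to worry about the tip $\eta(0) = 0$ since we always have $s > 0$, and moreover $\Phi$ is integrable at the origin as $\int_0^1 r^{d-1}\,dr < \infty$.
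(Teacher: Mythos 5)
Your argument is essentially the paper's: the paper likewise fixes $0<s<t$, notes that $\eta([s,t])$ lies in $B(0,R)$ with probability tending to $1$ as $R \to \infty$, and deduces $\mu^0(\eta([s,t])) \leq \mu^0(B(0,R)) < \infty$ a.s.\ because Proposition~\ref{prop:density_intensity_SLE} makes $\E[\mu^0(B(0,R))]$ finite; your polar-coordinate computation merely spells out the boundary integrability that the paper leaves implicit, and your countable-monotonicity remark to get all $0<s<t$ simultaneously is a harmless addition. One slip to correct: in this section $\mu^0$ is the candidate natural parameterization of $\SLE_{\kappa'}$, so the relevant exponent is $d = 1+\tfrac{\kappa'}{8}$, not the $\CLE$ carpet/gasket dimension $1+\tfrac{2}{\kappa}+\tfrac{3\kappa}{32}$ from~\eqref{eq:CLEdim} that you substituted. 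Fortunately the integrability criterion you derived, $\tfrac{8}{\kappa'}+d-3>-1$, becomes $\tfrac{\kappa'}{8}+\tfrac{8}{\kappa'}>1$, which holds for all $\kappa' \in (4,8)$ (indeed the left side is at least $2$), so the verification goes through once the correct exponent is inserted.
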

\begin{proof}
Fix $0 < s < t$.  Then $\p[ \eta([s,t]) \subseteq B(0,R)] \to 1$ as $R \to \infty$.  On $\{\eta([s,t]) \subseteq B(0,R)\}$ we have that  $\mu^0(\eta([s,t])) \leq \mu^0(B(0,R))$.  Proposition~\ref{prop:density_intensity_SLE} implies that $\E[ \mu^0(B(0,R))] < \infty$.  In particular, we a.s.\ have that $\mu^0(B(0,R)) < \infty$ for all $R > 0$.  Therefore $\mu^0(\eta([s,t])) < \infty$ a.s.
\end{proof}

Thus since $\mu^0$ is locally finite and satisfies the correct conformal covariance formula and $(\eta^0,\mu^0) \stackrel{d}{=} (\eta^s,\wt \mu^s)$ holds, we have that $\mu^0$ is indeed the natural parameterization, and hence, (a constant times) the Minkowski content of $\SLE_{\kappa'}$ for $\kappa' \in (4,8)$.

\section{Existence of the $\CLE$ volume measure}
\label{sec:ccCLE}

In this section we construct a family $(\p_D)$ of probability measures on pairs $(\Gamma,\Xi)$ consisting of a $\CLE_\kappa$ in $D$ and a measure supported in its carpet ($\kappa \leq 4$) or gasket ($\kappa > 4$) which is conformally covariant with exponent $d=d(\kappa)$. That is, we shall prove the existence part of Theorem~\ref{thm:mainresult}. We use the natural LQG measures which are defined on standard and generalized quantum disks. However, by virtue of absolute continuity (Section~\ref{sec:LQG}), we may perform calculations with a zero-boundary GFF in place of the quantum disks.

\subsection{Simple conformal loop ensembles}\label{sec:ccsCLE}

Fix $\kappa \in (8/3,4)$ and let $D \subseteq \C$ be simply connected. We define the probability measure $\p_D$ to be the law on pairs $(\Gamma,\Xi)$ where $\Gamma \sim \p_D^{\CLE_\kappa}$ and $\Xi$ is defined as follows. Let $h$ be a zero-boundary GFF in $D$ which is independent of $\Gamma$, fix the parameters $\gamma = \sqrt{\kappa}$ and $\alpha = 4/\kappa$ and recall from Section~\ref{sec:nmCLE} the definition of $\Lambda^{\Gamma,h}$ (note that by absolute continuity, this makes sense with a zero-boundary GFF $h$ in place of the quantum disk). Moreover, when considering $\Lambda^{\Gamma,h}$, we let the point mass associated with the loop $\CL$ of the counting measure, $N_\epsilon^{\Gamma,h}$, lie on a point $m \in \CL$, sampled uniformly from the quantum length measure of the loop (renormalized to be a probability measure). Then $\Xi$ is the random measure defined by
\begin{align}\label{eq:measure}
    \Xi(dz) = \Xi^\kappa(dz) = F_D(z) \E\!\left[\Lambda^{\Gamma,h}(dz) \, \middle| \, \Gamma \right]\!,
\end{align}
where $F_D(z) = F_D^\kappa(z) = r_D(z)^{-(\frac{1}{2} + \frac{2}{\kappa} + \frac{\kappa}{32})}$ and $r_D(z)$ is the conformal radius of $D$ as seen from $z$. Moreover, we define $\Xi$ to be zero on $\partial D$. We begin by proving that $\E[\Xi]$ is locally finite.

\begin{lem}\label{lem:locally_finite_carpet}
Let $D \subseteq \C$ be simply connected and let $(\Gamma,\Xi) \sim \p_D$ where $\p_D$ is as above. Then $\E[\Xi]$ is locally finite. Consequently, $\Xi$ is a.s.\ locally finite.
\end{lem}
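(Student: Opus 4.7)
The plan is to reduce the local finiteness of $\E[\Xi]$ to a corresponding bound on the intensity $\E[\Lambda^{\Gamma,h}]$, and to transfer the latter from the quantum-disk setting of~\cite{msw2020simpleclelqg} via an absolute-continuity comparison between a zero-boundary GFF and a quantum disk on compact subsets.

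First I would carry out a Fubini reduction. Since $F_D$ is a deterministic, positive, continuous function on $D$ and $\Lambda^{\Gamma,h}$ is a positive random measure, for every Borel $A \subseteq D$,
\[
\E[\Xi(A)] = \E\!\left[\int_A F_D(z)\,\E[\Lambda^{\Gamma,h}\giv\Gamma](dz)\right] = \int_A F_D(z)\,d\E[\Lambda^{\Gamma,h}](z).
\]
Because $F_D(z) = r_D(z)^{-(1/2+2/\kappa+\kappa/32)}$ is bounded on any compact $K \subset D$, local finiteness of $\E[\Xi]$ reduces to $\E[\Lambda^{\Gamma,h}(K)] < \infty$ for every such $K$. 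Given this, the a.s.\ local finiteness of $\Xi$ is immediate from Markov's inequality along an exhaustion of $D$ by compact sets.

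To bound $\E[\Lambda^{\Gamma,h}(K)]$, I would compare $h$ with an independent $\gamma$-quantum disk field $h^D$ on $D$. Using the Markov property of the GFF the two fields can be coupled on $K$ so that $h|_K = h^\circ + \Fh$ and $h^D|_K = h^\circ + \Fh^D$, with $h^\circ$ a common zero-boundary GFF on $K$ and $\Fh, \Fh^D$ (random) harmonic functions on $K$ that are independent of $h^\circ$. The natural LQG measure obeys the scaling
\[
\Lambda^{\Gamma, h+f}(dz) = e^{(\alpha+\tfrac12)\gamma f(z)/2}\,\Lambda^{\Gamma,h}(dz)
\]
for any continuous function $f$; this is the loop-counting analogue of~\eqref{eq:scaling_quantum_length}, obtained by localising $f$ to a constant over each small loop in the limit defining $\Lambda$. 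Applying this with $f = \Fh - \Fh^D$ and H\"older's inequality yields
\[
\E[\Lambda^{\Gamma,h}(K)] \leq \big\|\exp\!\big(\tfrac{(\alpha+1/2)\gamma}{2}\sup_K|\Fh - \Fh^D|\big)\big\|_{L^p}\cdot \|\Lambda^{\Gamma,h^D}(K)\|_{L^q}.
\]

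The first factor is finite for every $p \geq 1$ by standard Gaussian concentration: $\Fh$ and $\Fh^D$ are harmonic on a neighbourhood of $K$ and hence have Gaussian tails uniformly on $K$ (for $\Fh^D$ one also uses that the radial part of the quantum disk is a time-changed Brownian motion with an affine drift, which has finite exponential moments on bounded time intervals). The hard part is showing $\E[\Lambda^{\Gamma, h^D}(K)^q] < \infty$ for some $q > 1$. I would handle this by conditioning on the quantum disk's boundary length $\nu_{h^D}(\partial D)$ lying in a fixed bounded interval $[a,b]$ (which can be arranged freely via the scaling $h^D \mapsto h^D + c$, since this rescales boundary length by $e^{\gamma c/2}$), and then applying Chebyshev-type bounds to the approximating counting measures $N_\epsilon^{\Gamma, h^D}$, using the moment estimates and tightness inputs from~\cite{msw2020simpleclelqg} to pass to the limit.
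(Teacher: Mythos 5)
Your overall reduction (Fubini plus boundedness of $F_D$ on compact sets, then a transfer from the quantum disk) has the right shape, and the first and last steps are fine. The gap is in the transfer step. By coupling $h$ and $h^D$ through a common zero-boundary field on a neighbourhood of $K$ you end up with \emph{two} harmonic remainders $\Fh,\Fh^D$, and since the exponential weight $\exp\!\big(\tfrac{(\alpha+1/2)\gamma}{2}\sup_K|\Fh-\Fh^D|\big)$ is independent of neither $\Lambda^{\Gamma,h}$ nor $\Lambda^{\Gamma,h^D}$, you are forced into H\"older, which requires $\E[\Lambda^{\Gamma,h^D}(K)^q]<\infty$ for some $q>1$. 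That is precisely the ingredient you do not have: \cite{msw2020simpleclelqg} provides only the first moment ($\E[\Lambda^{\Gamma,h^D}(D)]=1$ for the unit boundary length disk), and higher moments of such LQG-type measures are genuinely delicate --- recall that for $\gamma=\sqrt{\kappa}\in(\sqrt{8/3},2)$ the total quantum area of a quantum disk has finite moments only up to order $4/\gamma^2\in(1,3/2)$ --- so ``Chebyshev-type bounds on $N_\epsilon^{\Gamma,h^D}$ plus tightness'' is not a proof, and establishing any $q$-th moment bound for the carpet measure would itself be a nontrivial piece of work. A related overstatement: the first H\"older factor is claimed finite for \emph{every} $p\geq 1$ by ``Gaussian concentration,'' but $\Fh^D$ contains the non-Gaussian (conditioned Bessel-type) radial part of the disk field, so at best one gets exponential moments for sufficiently small coefficients, which further constrains the admissible $q$. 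Also, the exact identity $\Lambda^{\Gamma,h+f}(dz)=e^{(\alpha+1/2)\gamma f(z)/2}\Lambda^{\Gamma,h}(dz)$ for general continuous $f$ is stronger than what is proved in the paper (only the constant case is used) and is not needed.

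The fix --- and what the paper actually does --- is to arrange the comparison so that only the first moment enters. Decompose the quantum-disk field itself as $h^D=h+\Fh$ with $h$ a zero-boundary GFF on $D$ \emph{independent} of the harmonic part $\Fh$, and define $\Xi$ using this $h$. Then the event $E_K=\{\sup_{z\in K}|\Fh(z)|\leq C\}$ lies in $\sigma(\Fh)$ and is therefore independent of $\Lambda^{\Gamma,h}$, so
$\E[\Lambda^{\Gamma,h}(K)]\leq \p[E_K]^{-1}\,\E[\one_{E_K}\Lambda^{\Gamma,h}(K)]\leq 2\,e^{(\alpha+\frac{1}{2})C\frac{\gamma}{2}}\,\E[\Lambda^{\Gamma,h^D}(K)]\leq 2\,e^{(\alpha+\frac{1}{2})C\frac{\gamma}{2}}$,
where on $E_K$ one uses the constant-shift scaling (and monotonicity of quantum lengths) to get $\Lambda^{\Gamma,h}|_K\leq e^{(\alpha+\frac{1}{2})C\frac{\gamma}{2}}\Lambda^{\Gamma,h^D}|_K$, and then only $\E[\Lambda^{\Gamma,h^D}(D)]=1$. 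With your two-sided coupling the analogous restriction-to-an-event trick is unavailable because the relevant event is correlated with both measures; this is the structural reason your route needs the missing $L^q$ bound, whereas the paper's does not.
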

\begin{proof}
Fix $\kappa \in (8/3,4)$ and let $\gamma = \sqrt{\kappa}$. We begin by noting that it was proved in~\cite{msw2020simpleclelqg} that if $(D,h^D)$ is a unit boundary length $\gamma$-quantum disk and $\Gamma \sim \p_D^{\CLE_\kappa}$, then $\E\!\left[\Lambda^{\Gamma,h^D}(D)\right] = 1$. Moreover,  if we add a constant $C$ to the field, then the resulting carpet measure is multiplied by a factor of $e^{(\alpha + 1/2)C\frac{\gamma}{2}}$. This follows since adding a constant $C$ to a field $h^*$ scales quantum lengths by $e^{C\frac{\gamma}{2}}$ and hence for $U \subseteq D$ we have that
\begin{align}\label{eq:scaling_carpet_measure}
	\Lambda^{\Gamma,h^*+C}(U) &= \lim_{\epsilon \to 0} \epsilon^{\alpha+\frac{1}{2}} N_\epsilon^{\Gamma,h^*+C}(U) = \lim_{\epsilon \to 0} \epsilon^{\alpha+\frac{1}{2}} N_{\epsilon e^{-C\frac{\gamma}{2}}}^{\Gamma,h^*}(U) \\
	&= e^{(\alpha + \frac{1}{2})C \frac{\gamma}{2}} \lim_{\epsilon \to 0} (\epsilon e^{-C\frac{\gamma}{2}})^{\alpha+\frac{1}{2}} N_{\epsilon e^{-C\frac{\gamma}{2}}}^{\Gamma,h^*}(U) = e^{(\alpha + \frac{1}{2})C \frac{\gamma}{2}} \Lambda^{\Gamma,h^*}(U). \nonumber
\end{align}
Fix some compact $K \subseteq D$ and note that $\delta \coloneqq \dist(K,\partial D) > 0$.  Then we can decompose $h^D$ into $h^D = h + \Fh$ where $h$ is a zero-boundary GFF on $D$, $\Fh$ is a distribution which is harmonic on $D$, and $h$ and $\Fh$ are independent.  Since $\delta > 0$ it follows that $\Fh$ is a.s.\ bounded on $K$ and consequently that $\p[\sup_{z \in K} | \Fh(z) | \leq C] \to 1$ as $C \to \infty$. Thus if we let $E_K = \{\sup_{z \in K} | \Fh(z) | \leq C\}$ then we can choose $C > 0$ sufficiently large so that $\p[E_K] \geq 1/2$.  By the independence of $h$ and $\Fh$,~\eqref{eq:scaling_carpet_measure}, the inequality $r_D(z) \geq \dist(z,\partial D)$ and the fact that $\E[ \Lambda^{\Gamma,h^D}(D)] = 1$, we have that
\begin{align*}
	\E[\Xi](K) &= \int_K F_D(z) \E\!\left[ \Lambda^{\Gamma,h}(dz) \right] \leq 2 \int_K F_D(z) \E\!\left[ \one_{E_K} \Lambda^{\Gamma,h}(dz) \right] \\
	&\leq 2 e^{(\alpha + \frac{1}{2})C \frac{\gamma}{2}} \int_K F_D(z) \E\!\left[ \one_{E_K} \Lambda^{\Gamma,h^D}(dz) \right] \\
	&\leq 2e^{(\alpha + \frac{1}{2})C \frac{\gamma}{2}} \delta^{-(\frac{1}{2}+\frac{2}{\kappa}+\frac{\kappa}{32})} \E\!\left[ \Lambda^{\Gamma,h^D}(K) \right] \\
	&\leq 2e^{(\alpha + \frac{1}{2})C \frac{\gamma}{2}} \delta^{-(\frac{1}{2}+\frac{2}{\kappa}+\frac{\kappa}{32})} < \infty.
\end{align*}
Thus $\E[\Xi]$ is locally finite. The second assertion follows immediately from the first.
\end{proof}

\begin{lem}\label{lem:conformal_covariance_carpet}
The family of measures $(\p_D)$ defined above is conformally covariant with exponent $d = d(\kappa)$ given by~\eqref{eq:CLEdim}. Moreover, $(\p_D)$ respects the $\CLE_\kappa$ Markov property.
\end{lem}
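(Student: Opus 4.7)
The plan is to verify conformal covariance and the $\CLE_\kappa$ Markov property in parallel by combining the LQG coordinate change with the Markov decomposition of the GFF, via a local analogue of the scaling identity~\eqref{eq:scaling_carpet_measure}.

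For conformal covariance, fix a conformal $\varphi : D \to \wt{D}$ and couple the fields by $\wt{h} = h \circ \varphi^{-1}$, so that $\wt{h}$ is a zero-boundary GFF on $\wt{D}$ independent of $\wt{\Gamma} := \varphi(\Gamma)$. Writing $h' = \wt{h} + Q \log|(\varphi^{-1})'|$ for the coordinate-change field of~\eqref{eq:QS}, the preservation of quantum length under quantum surface maps yields $\Lambda^{\wt{\Gamma}, h'} = \varphi_\ast \Lambda^{\Gamma, h}$. Since $h' - \wt{h}$ is a smooth deterministic function, the local version of~\eqref{eq:scaling_carpet_measure} gives
\[ \Lambda^{\wt{\Gamma}, \wt{h}}(d\wt{z}) = |\varphi'(z)|^{(\alpha+1/2)\gamma Q/2}\, \varphi_\ast \Lambda^{\Gamma, h}(d\wt{z}). \]
Combining this with the identity $r_{\wt{D}}(\varphi(z)) = |\varphi'(z)|\, r_D(z)$, which contributes the factor $F_{\wt{D}}(\varphi(z))/F_D(z) = |\varphi'(z)|^{-(1/2 + 2/\kappa + \kappa/32)}$, and taking conditional expectation given $\wt{\Gamma}$, a direct computation with $\alpha = 4/\kappa$, $\gamma = \sqrt{\kappa}$, $Q = 2/\gamma + \gamma/2$ yields the net exponent $(\alpha+1/2)\gamma Q/2 - (1/2 + 2/\kappa + \kappa/32) = 1 + 2/\kappa + 3\kappa/32 = d(\kappa)$.

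For the Markov property, use the GFF Markov decomposition $h = h_V + h_V^\perp$, with $h_V$ an independent zero-boundary GFF on $V$ and $h_V^\perp$ a.s.\ harmonic (hence continuous) on $V$. In the simple regime, loops of $\Gamma_V$ lie in $V$ and are the only ones contributing to $\Lambda^{\Gamma,h}|_V$, so the local scaling applied with $g = h_V^\perp$ gives $\Lambda^{\Gamma,h}|_V(dz) = e^{(\alpha+1/2)\gamma h_V^\perp(z)/2}\, \Lambda^{\Gamma_V, h_V}(dz)$. Conditioning on $\Gamma$ and using the independence of $h_V$ and $h_V^\perp$, the harmonic factor contributes the Gaussian moment
\[ \E\!\left[e^{(\alpha+1/2)\gamma h_V^\perp(z)/2}\right] = (r_D(z)/r_V(z))^{\beta^2/2}, \quad \beta = (\alpha+1/2)\gamma/2, \]
where $\mathrm{Var}(h_V^\perp(z)) = \log(r_D(z)/r_V(z))$ comes from $G_D(z,z) - G_V(z,z)$ in the usual normalization. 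A short calculation gives $\beta^2/2 = 1/2 + 2/\kappa + \kappa/32$, which is precisely the exponent in $F_D$, so that $\Xi|_V(dz) = F_V(z)\, \E[\Lambda^{\Gamma_V, h_V}(dz) \giv \Gamma_V]$. The $\CLE_\kappa$ Markov property gives that conditional on $\Gamma \setminus \Gamma_V$ the law of $\Gamma_V$ is $\CLE_\kappa$ on $V$; and $\Xi|_{D \setminus V}$ is $\sigma(\Gamma \setminus \Gamma_V)$-measurable since loops of $\Gamma_V$ do not contribute to the counting measure outside $V$. Combining these gives the claim.

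The main technical obstacle is upgrading~\eqref{eq:scaling_carpet_measure} to the local identity $\Lambda^{\Gamma, h+g}(dz) = e^{(\alpha+1/2)\gamma g(z)/2}\, \Lambda^{\Gamma,h}(dz)$ for continuous $g$ (needed here for the smooth deterministic $-Q \log|(\varphi^{-1})'|$ and for the random but a.s.\ continuous $h_V^\perp$). The natural approach is a dyadic localization: cover the relevant set by balls on which $g$ varies by at most $\delta$, compare $N_\epsilon^{\Gamma, h+g}$ on each ball with $N_{\epsilon e^{-\gamma g(\cdot)/2}}^{\Gamma, h}$, control the errors using uniform continuity of $g$ and the convergence in probability from~\cite{msw2020simpleclelqg} applied against continuous test functions, and then send $\delta \to 0$.
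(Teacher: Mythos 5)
Your proposal is correct and takes essentially the same route as the paper: conformal covariance via the LQG coordinate change~\eqref{eq:QS} combined with the scaling of quantum length under adding a function to the field, and the Markov property via the GFF Markov decomposition, where the paper's auxiliary Gaussian field $\FH$ with variance $\log r_D(z)-\log r_V(z)$ is precisely your $h_V^\perp$ and the exponent identities $(\alpha+\tfrac12)Q\tfrac{\gamma}{2}-(\tfrac12+\tfrac{2}{\kappa}+\tfrac{\kappa}{32})=d$ and $\tfrac12(\alpha+\tfrac12)^2\tfrac{\gamma^2}{4}=\tfrac12+\tfrac{2}{\kappa}+\tfrac{\kappa}{32}$ are the same. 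The local version of~\eqref{eq:scaling_carpet_measure} that you flag as the main technical point is also what the paper uses, implicitly, in its displayed manipulations of $N_\epsilon^{\Gamma,h}$ with $z$-dependent thresholds, so your localization remark merely makes explicit a step the paper carries out formally.
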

\begin{proof}
We begin by proving the conformal covariance.  Suppose that $(\Gamma,\Xi) \sim \p_D$ and let $h$ and $\Lambda^{\Gamma,h}$ be as above. Consider a simply connected domain $\wt{D} \subseteq \C$ and a conformal map $\varphi: D \to \wt{D}$ and a zero-boundary GFF $\wt{h}$ on $\wt{D}$ which is independent of $\Gamma$ and $h$ (the latter independence does not matter). Then $\wt{\Gamma} = \varphi(\Gamma) \sim \p_{\wt{D}}^{\CLE_\kappa}$ and $h^0 = \wt{h} \circ \varphi$ is a zero-boundary GFF in $D$ independent of $\Gamma$.  We let $\wt{\Xi}$ be defined by~\eqref{eq:measure} with $\wt{\Gamma}$ and $\wt{h}$ in place of $\Gamma$ and $h$, respectively. Then we have as in~\eqref{eq:scaling_carpet_measure} that
\begin{align*}
	\wt{\Xi} \circ \varphi(dz) &= F_{\wt{D}}(\varphi(z)) \E\!\left[ \Lambda^{\wt{\Gamma},\wt{h}} \circ \varphi(dz) \, \middle| \, \wt{\Gamma} \right] \\
	&= F_{\wt{D}}(\varphi(z)) \E\!\left[ \lim_{\epsilon \rightarrow 0} \epsilon^{\alpha+\frac{1}{2}} N_\epsilon^{\wt{\Gamma},\wt{h}} \circ \varphi(dz) \, \middle| \, \wt{\Gamma} \right] \\
	&= F_{\wt{D}}(\varphi(z)) \E\!\left[ \lim_{\epsilon \rightarrow 0} \epsilon^{\alpha+\frac{1}{2}} N_\epsilon^{\Gamma,\wt{h}\circ \varphi + Q \log |\varphi'| }(dz) \, \middle| \, \wt{\Gamma} \right] \\
	&= F_{\wt{D}}(\varphi(z)) \E\!\left[ \lim_{\epsilon \rightarrow 0} \epsilon^{\alpha+\frac{1}{2}} N_{\epsilon |\varphi'|^{-Q \frac{\gamma}{2}}}^{\Gamma,h^0}(dz) \, \middle| \, \Gamma \right] \\
	&= F_{\wt{D}}(\varphi(z)) \E\!\left[ |\varphi'(z)|^{(\alpha + \frac{1}{2}) Q \frac{\gamma}{2}} \lim_{\epsilon \rightarrow 0} \left(\epsilon |\varphi'(z)|^{-Q \frac{\gamma}{2}}\right)^{\alpha+\frac{1}{2}} N_{\epsilon |\varphi'|^{-Q \frac{\gamma}{2}}}^{\Gamma,h^0}(dz) \, \middle| \, \Gamma \right] \\
	&= F_{\wt{D}}(\varphi(z)) |\varphi'(z)|^{(\alpha + \frac{1}{2}) Q \frac{\gamma}{2}} \E\!\left[ \Lambda^{\Gamma,h^0}(dz) \, \middle| \, \Gamma \right]\!.
\end{align*}

Finally, noting that
\begin{align*}
	F_{\wt{D}}(\varphi(z)) = |\varphi'(z)|^{-(\frac{1}{2} + \frac{2}{\kappa} + \frac{\kappa}{32})} F_D(z)
\end{align*}
and that $(\alpha + \tfrac{1}{2})Q \tfrac{\gamma}{2} -(\tfrac{1}{2} + \tfrac{2}{\kappa} + \tfrac{\kappa}{32}) = d$, we have that
\begin{align*}
	\wt{\Xi} \circ \varphi(dz) = |\varphi'(z)|^d \Xi(dz),
\end{align*}
which proves the conformal covariance.

We now turn to the proof that $(\p_D)$ respects the $\CLE_\kappa$ domain Markov property. Consider $U \subseteq D$ simply connected, let $V$ be a component of $U^\Gamma$ (recall Section~\ref{sec:intro}) and let $\Gamma_V$ denote the loops of $\Gamma$ which are contained in $V$.  Let $\phi: \D \to V$ be a conformal map and write $\wh{\Gamma} = \phi^{-1}(\Gamma_V) \sim \p_{\D}^{\CLE_\kappa}$.

As in the proof of Lemma~\ref{lem:measure_SLE}, we let $\FH$ be a Gaussian field on $V$ with $\textup{Var} \, \FH(z) = \log r_{\D}(z) - \log r_{\D}(\phi^{-1}(z)) + \log|(\phi^{-1})'(z)|$ taken to be independent of $\Gamma_V$ and $h$.  We then set $\wh{h} = h \circ \phi - \FH \circ \phi$ so that $\wh{h}$ is a zero-boundary GFF on $\D$ independent of $\wh{\Gamma}$.  Thus
\begin{align*}
    \Xi|_V \circ \phi(dz) &= F_{\D}(\phi(z)) \E\!\left[ \Lambda^{\Gamma,h} \circ \phi(dz) \, \middle| \, \Gamma \right] \\
    &= F_{\D}(\phi(z)) \E\!\left[ \lim_{\epsilon \rightarrow 0} \epsilon^{\alpha+\frac{1}{2}} N_\epsilon^{\Gamma,h} \circ \phi(dz) \, \middle| \, \Gamma \right] \\
    &= F_{\D}(\phi(z)) \E\!\left[ \lim_{\epsilon \rightarrow 0} \epsilon^{\alpha+\frac{1}{2}} N_\epsilon^{\wh{\Gamma},h\circ \phi + Q \log |\phi'|}(dz) \, \middle| \, \Gamma \right] \\
    &= F_{\D}(\phi(z)) \E\!\left[ \lim_{\epsilon \rightarrow 0} \epsilon^{\alpha+\frac{1}{2}} N_{\epsilon|\phi'|^{-Q\frac{\gamma}{2}}}^{\wh{\Gamma},h\circ \phi}(dz) \, \middle| \, \Gamma \right] \\
    &= F_{\D}(\phi(z)) \E\!\left[ \lim_{\epsilon \rightarrow 0} \epsilon^{\alpha+\frac{1}{2}} N_{\epsilon|\phi'|^{-Q\frac{\gamma}{2}}e^{-\frac{\gamma}{2} \FH \circ \phi}}^{\wh{\Gamma},\wh{h}}(dz) \, \middle| \, \Gamma \right] \\
    &= F_{\D}(\phi(z)) \E\!\left[ |\phi'(z)|^{(\alpha+\frac{1}{2})Q\frac{\gamma}{2}}e^{(\alpha+\frac{1}{2})\frac{\gamma}{2} \FH \circ \phi(z)} \right.\\
    &\qquad \qquad \qquad\left. \times \lim_{\epsilon \rightarrow 0} (\epsilon|\phi'(z)|^{-Q\frac{\gamma}{2}}e^{-\frac{\gamma}{2} \FH \circ \phi(z)})^{\alpha+\frac{1}{2}} N_{\epsilon|\phi'|^{-Q\frac{\gamma}{2}}e^{-\frac{\gamma}{2} \FH \circ \phi}}^{\wh{\Gamma},\wh{h}}(dz) \, \middle| \, \Gamma \right] \\
    &= F_{\D}(\phi(z)) |\phi'(z)|^{(\alpha+\frac{1}{2})Q\frac{\gamma}{2}} \E\!\left[ e^{(\alpha+\frac{1}{2})\frac{\gamma}{2} \FH \circ \phi(z)} \, \middle| \, \Gamma \right] \E\!\left[ \Lambda^{\wh{\Gamma},\wh{h}}(dz) \, \middle| \, \wh{\Gamma} \right].
\end{align*}
Next, noting that since $\frac{1}{2}(\alpha+\frac{1}{2})^2 \frac{\gamma^2}{4} = \frac{1}{2} + \frac{2}{\kappa} + \frac{\kappa}{32}$, we have
\begin{align*}
    \E\!\left[ e^{(\alpha+\frac{1}{2})\frac{\gamma}{2} \FH \circ \phi(z)} \, \middle| \, \Gamma \right] &= r_{\D}(\phi(z))^{\frac{1}{2} + \frac{2}{\kappa} + \frac{\kappa}{32}}  r_{\D}(z)^{-(\frac{1}{2} + \frac{2}{\kappa} + \frac{\kappa}{32})} |\phi'(z)|^{-(\frac{1}{2} + \frac{2}{\kappa} + \frac{\kappa}{32})} \\
    &= \frac{F_{\D}(z)}{F_{\D}(\phi(z))} |\phi'(z)|^{-(\frac{1}{2} + \frac{2}{\kappa} + \frac{\kappa}{32})}.
\end{align*}
Thus, again since $(\alpha+\frac{1}{2})Q\frac{\gamma}{2} - (\frac{1}{2} + \frac{2}{\kappa} + \frac{\kappa}{32}) = d$, we have that 
\begin{align*}
    \Xi|_{V} \circ \phi(dz) = |\phi'(z)|^d \wh{\Xi}(dz),
\end{align*}
where $\wh{\Xi}$ is such that $(\wh{\Gamma},\wh{\Xi}) \sim \p_{\D}$. Consequently it follows that the conditional law of $(\Gamma_V, \Xi_V)$ given $\Gamma \setminus \Gamma_V$ and $\Xi|_{D \setminus V}$ is $\p_V$.
\end{proof}

\subsection{Non-simple conformal loop ensembles}
Fix $\kappa' \in (4,8)$, let $\gamma = 4/\sqrt{\kappa'}$ and $\alpha' = 4/\kappa'$ and define the law $\p_D$ on pairs $(\Gamma,\Xi)$ where the marginal law of $\Gamma$ is that of a $\CLE_{\kappa'}$ in $D$ and $\Xi$ is defined as follows. Let $h$ be a zero-boundary GFF in $D$ and let $\Lambda^{\Gamma,h}$ be the natural LQG measure on the $\CLE_{\kappa'}$ gasket, defined in Section~\ref{sec:nmCLE}, with $h$ in place of the generalized quantum disk distribution.  We then define $\Xi$ by
\begin{align*}
	\Xi(dz) = \Xi^{\kappa'}(dz) = F_D(z) \E\!\left[ \Lambda^{\Gamma,h}(dz) \, \middle| \, \Gamma \right],
\end{align*}
where $F_D(z) = F_D^{\kappa'}(z) = r_D(z)^{-(\frac{1}{2} + \frac{2}{\kappa'} + \frac{\kappa'}{32})}$. Again, as in the case of the simple $\CLE$s, we let the measure~$\Xi$ be zero on $\partial D$.

The key in the proof of conformal covariance for the measures on simple $\CLE$ was the way quantum length scales when adding a constant to the field.  That is, when we add $A$ to the field, quantum length is scaled by $e^{\frac{\sqrt{\kappa}}{2} A}$ so that
\[ N_\epsilon^{\Gamma,h+A} = N_{\epsilon e^{-\frac{\sqrt{\kappa}}{2} A}}^{\Gamma,h}.\]
Hence, if we show that in the case of $\kappa' \in (4,8)$ that
\[ N_\epsilon^{\Gamma,h+A} = N_{\epsilon e^{-\frac{\sqrt{\kappa'}}{2} A}}^{\Gamma,h},\]
 then the conformal covariance follows from the same calculations. Below, we consider $\CLE_{\kappa'}$ drawn on a generalized quantum disk as the description on this surface is precise. That the same scaling rules hold when replacing the distribution by a zero-boundary GFF follows by absolute continuity.

Under the quantum natural time parameterization, the left/right boundary lengths of an $\SLE_{\kappa'}$-type curve that make up a $\CLE_{\kappa'}$ loop are absolutely continuous with respect to independent totally asymmetric $\frac{\kappa'}{4}$-stable processes with only negative jumps (see~\cite{dms2014mating}). The generalized quantum length of the loop is then defined as the (normalized) limit of the number loops with boundary length in $[\epsilon,2\epsilon]$ as $\epsilon \rightarrow 0$, that is, the (normalized) limit of the number of negative jumps of the two $\frac{\kappa'}{4}$-stable processes which have size in $[\epsilon,2\epsilon]$.

The law of the number of negative jumps of size belonging to a set $I \subseteq (0,\infty)$ by time $t$ is that of a Poisson process with jump intensity given by $\Pi(I)$ where $\Pi(dy) = C y^{-\frac{\kappa'}{4}-1}dy$ is the L\'evy measure of the stable process (see~\cite{bertoin1996book}) and $C > 0$ is some constant. Thus, if $N_\epsilon$ denotes the number of jumps of a $\frac{\kappa'}{4}$-stable process with size in $[\epsilon,2\epsilon]$ then $\epsilon^{\frac{\kappa'}{4}} N_\epsilon$ converges a.s.\ to a constant as $\epsilon \rightarrow 0$.

Thus, considering a single $\SLE_{\kappa'}$-type loop $\CL$ in the $\CLE_{\kappa'}$ the generalized quantum length changes as follows when we add a constant $A$ to the field. The quantum boundary lengths of the quantum disks surrounded by $\CL$ are multiplied by $e^{A\frac{\gamma}{2}}$. Hence, the quantum length of the loop $\CL$ as measured by the field (recall Remark~\ref{rmk:qlengthCLE}) is given by
\begin{align*}
    \sigma^{\CL,h+A}(\CL) = \lim_{\epsilon \rightarrow 0} \epsilon^{\frac{\kappa'}{4}} N_\epsilon^{\CL,h+A}(\CL) = \lim_{\epsilon \rightarrow 0} \epsilon^{\frac{\kappa'}{4}} N_{\epsilon e^{-\frac{\gamma}{2} A}}^{\CL,h}(\CL) = e^{\frac{\kappa'}{4}\frac{\gamma}{2} A} \sigma^{\CL,h}(\CL)  = e^{\frac{\sqrt{\kappa'}}{2} A} \sigma^{\CL,h}(\CL).
\end{align*}
Consequently, we have that
\begin{align*}
    N_\epsilon^{\Gamma,h+A} = N_{\epsilon e^{\frac{\sqrt{\kappa'}}{2} A}}^{\Gamma,h},
\end{align*}
which implies for $U \subseteq D$ that
\begin{align}
\label{eq:scaling_gasket_measure}
	\Lambda^{\Gamma,h+A}(U) = e^{(\alpha' + \frac{1}{2})A \frac{\sqrt{\kappa'}}{2}} \Lambda^{\Gamma,h}.
\end{align}
Thus, if we prove the almost sure local finiteness of $\E[\Xi]$ then it follows by the exact same proof as that of Lemma~\ref{lem:conformal_covariance_carpet} that $(\p_D)$ satisfies the properties of Definition~\ref{def:cov_measure} with exponent $d = d(\kappa')$. 

\begin{lem}\label{lem:locally_finite_gasket}
Let $D \subseteq \C$ be simply connected and let $(\Gamma,\Xi) \sim \p_D$ where $\p_D$ is as above. Then $\E[\Xi]$ is locally finite. Consequently, $\Xi$ is a.s.\ locally finite.
\end{lem}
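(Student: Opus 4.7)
The plan is to follow the same strategy as in the proof of Lemma~\ref{lem:locally_finite_carpet} almost verbatim, using the scaling rule~\eqref{eq:scaling_gasket_measure} in place of~\eqref{eq:scaling_carpet_measure}, and using as input the non-simple analog from~\cite{msw2020nonsimpleclelqg} of the total mass bound from~\cite{msw2020simpleclelqg}. Concretely, I would first invoke~\cite{msw2020nonsimpleclelqg} to say that if $(D,h^D)$ is a generalized quantum disk of unit boundary length and $\Gamma$ is an independent $\CLE_{\kappa'}$ on $D$, then with the normalization of $\Lambda^{\Gamma,h^D}$ from Section~\ref{sec:nmCLE} one has $\E[\Lambda^{\Gamma,h^D}(D)] < \infty$.

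Next, fix a compact $K \subseteq D$ and set $\delta = \dist(K,\partial D) > 0$. Inside $D$ the law of the generalized quantum disk field is mutually absolutely continuous with respect to that of a zero-boundary GFF (as noted at the end of Section~\ref{sec:LQG}), so one can write $h^D = h + \Fh$ on a neighborhood of $K$, where $h$ is a zero-boundary GFF on $D$ independent of $\Fh$ and where $\Fh$ is a.s.\ bounded on $K$ because $\delta > 0$. In particular, $\p[\sup_{z\in K}|\Fh(z)| \leq C] \to 1$ as $C \to \infty$, so one can fix $C < \infty$ large enough that the event $E_K = \{\sup_{z \in K}|\Fh(z)| \leq C\}$ has probability at least $1/2$.

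Applying the scaling rule~\eqref{eq:scaling_gasket_measure} with $A = \Fh$, using $F_D(z) \leq \delta^{-(\frac{1}{2}+\frac{2}{\kappa'}+\frac{\kappa'}{32})}$ for $z \in K$, and using the independence of $h$ and $\Fh$ together with $\p[E_K]\geq 1/2$, I would obtain
\[
\E[\Xi(K)] \leq 2\int_K F_D(z)\,\E\!\left[\one_{E_K}\Lambda^{\Gamma,h}(dz)\right]
\leq 2 e^{(\alpha'+\frac{1}{2})C\frac{\sqrt{\kappa'}}{2}} \delta^{-(\frac{1}{2}+\frac{2}{\kappa'}+\frac{\kappa'}{32})} \E[\Lambda^{\Gamma,h^D}(D)] < \infty,
\]
which gives the local finiteness of $\E[\Xi]$; the a.s.\ local finiteness of $\Xi$ then follows immediately as in the simple case.

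The only real obstacle compared with Lemma~\ref{lem:locally_finite_carpet} is conceptual: a generalized quantum disk is a looptree of quantum disks rather than a single GFF-like distribution, so the ``$h^D = h + \Fh$'' decomposition on $K$ must be justified through the absolute continuity already alluded to in Section~\ref{sec:LQG} rather than through the explicit free boundary GFF decomposition used in the simple case. Once this absolute continuity is invoked, together with the finiteness of $\E[\Lambda^{\Gamma,h^D}(D)]$ from~\cite{msw2020nonsimpleclelqg} and the scaling~\eqref{eq:scaling_gasket_measure}, the rest of the argument is a direct transcription of the proof of Lemma~\ref{lem:locally_finite_carpet}.
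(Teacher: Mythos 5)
Your overall template (decompose a disk field into a zero-boundary GFF plus an independent harmonic part bounded on $K$, then apply the scaling rule~\eqref{eq:scaling_gasket_measure}) is the right one, but the step you dismiss as merely ``conceptual'' is exactly where the argument as written breaks down. The input from~\cite{msw2020nonsimpleclelqg} is that $\E[\Lambda^{\Gamma,h^{GD}}(D)]=1$ for a unit boundary length \emph{generalized} quantum disk, and a generalized quantum disk is a looptree of ordinary quantum disks glued at points: it is not a GFF-like field on the simply connected domain $D$, so there is no decomposition $h^{GD}=h+\Fh$ with $h$ a zero-boundary GFF on $D$ independent of a harmonic $\Fh$ that is a.s.\ bounded on a compact $K\subseteq D$ (indeed, ``a neighborhood of $K$'' is not even well posed for the looptree surface). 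Moreover, the mutual absolute continuity mentioned at the end of Section~\ref{sec:LQG} cannot substitute for such a decomposition: absolute continuity of laws transfers almost sure statements but not expectation bounds, since the Radon--Nikodym derivative need not be bounded, whereas your final display needs the quantitative comparison $\Lambda^{\Gamma,h}(K)\le e^{(\alpha'+\frac{1}{2})C\frac{\sqrt{\kappa'}}{2}}\Lambda^{\Gamma,h^D}(K)$ on $\{\sup_{z\in K}|\Fh(z)|\le C\}$ together with the independence of $\Fh$ from $(\Gamma,h)$ to remove the indicator at the cost of a factor $2$. In the simple case these facts come from the explicit structure of the ordinary quantum disk field; for the generalized disk they are not available.

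The paper closes this gap with an intermediate reduction that is missing from your proposal: by the looptree construction, a unit boundary length generalized quantum disk has, with positive probability $\p[E]>0$, a bead which is an ordinary quantum disk with boundary length in $[1,2]$, and restricting the gasket measure to that bead and using scaling gives $1=\E[\Lambda^{\Gamma,h^{GD}}(D)]\ge\p[E]\,\E[\Lambda^{\Gamma,h^D}(D)]$, hence $\E[\Lambda^{\Gamma,h^D}(D)]<\infty$ for an \emph{ordinary} unit boundary length quantum disk $(D,h^D)$. Only after this step can one transcribe the proof of Lemma~\ref{lem:locally_finite_carpet}, since the ordinary disk field genuinely admits the decomposition $h^D=h+\Fh$ into a zero-boundary GFF on $D$ plus an independent harmonic function bounded on $K$. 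You should add this reduction (or some equivalent way of producing a field on $D$, comparable to the zero-boundary GFF in the above quantitative sense, with finite expected gasket mass); without it, the comparison between $h$ and the generalized quantum disk is not justified.
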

\begin{proof}
Fix $\kappa' \in (4,8)$ and let $\gamma = 4/\sqrt{\kappa'}$. It was proved in~\cite{msw2020nonsimpleclelqg} that if $(D,h^{GD})$ is a unit boundary length $\gamma$-generalized quantum disk and $\Gamma \sim \p_D^{\CLE_{\kappa'}}$, then $\E\!\left[ \Lambda^{\Gamma,h^{GD}}(D) \right] = 1$.  Recall that in order to construct a generalized quantum disk, one first samples a $\tfrac{4}{\gamma^2}$-stable loop tree, where the loop lengths are given by the sizes of the jumps a $\tfrac{4}{\gamma^2}$-stable L\'{e}vy excursion. One then samples ordinary quantum disks in the loops, with boundary length given by the length of the loops. In a unit boundary length generalized quantum disk, the stable L\'{e}vy excursion runs for time $1$ and hence has a positive probability of having a downward jump with size in $[1,2]$.  Let $E$ be this event and let $(D,h^D)$ be a unit boundary length $\gamma$-quantum disk.  Then we have that
\[ 1 = \E\!\left[ \Lambda^{\Gamma,h^{GD}}(D) \right] \geq \E\!\left[ \Lambda^{\Gamma,h^{GD}}(D) \one_E \right] \geq \p[E] \E[\Lambda^{\Gamma,h^D}(D)].\]
In the final inequality, we used scaling to conclude that $\E[\Lambda^{\Gamma,h^D}(D)]$ would be larger if we replaced $(D,h^D)$ by a quantum disk with boundary length $\ell \in [1,2]$.  As we mentioned before, $\p[E] > 0$ so we conclude that $\E[\Lambda^{\Gamma,h^D}(D)] < \infty$.

Moreover, considering a compact set $K \subseteq D$ as before and writing $\delta \coloneqq \dist(K,\partial D) > 0$, as in the proof of Lemma~\ref{lem:locally_finite_carpet} we write $h^D = h + \Fh$, where $h$ is a zero-boundary GFF on $D$, and $\Fh$ harmonic and independent of $h$.  Let $C$ be a constant such that $\p[ \sup_{z \in K} | \Fh(z) | \leq C] \geq 1/2$.  Then
\begin{align*}
	\Lambda^{\Gamma,h}(K) \leq 2 e^{(\alpha' + \frac{1}{2}) C \frac{\sqrt{\kappa'}}{2}} \Lambda^{\Gamma,h^D}(K) \quad\text{on}\quad \sup_{z \in K} | \Fh(z) | \leq C.
\end{align*} 
Hence we have that
\begin{align*}
	\E[\Xi](K) &= \int_K F_D(z) \E[\Lambda^{\Gamma,h}(dz)] \leq \delta^{-(\frac{1}{2} + \frac{2}{\kappa'} + \frac{\kappa'}{32})} \E[\Lambda^{\Gamma,h}(K)] \\
	&= 2 e^{(\alpha' + \frac{1}{2}) C \frac{\sqrt{\kappa'}}{2}} \delta^{-(\frac{1}{2} + \frac{2}{\kappa'} + \frac{\kappa'}{32})} \E[\Lambda^{\Gamma,h^D}(K)] < \infty.
\end{align*}
Thus $\E[\Xi]$ is locally finite and the second assertion follows immediately.
\end{proof}

By Lemma~\ref{lem:locally_finite_gasket} and the discussion before it, we have that the next lemma follows by repeating the proof of Lemma~\ref{lem:conformal_covariance_carpet}.

\begin{lem}\label{lem:conformal_covariance_gasket}
The family of measures $(\p_D)$ defined above is conformally covariant with exponent $d = d(\kappa')$ given by~\eqref{eq:CLEdim}. Moreover, $(\p_D)$ respects the $\CLE_\kappa$ Markov property.
\end{lem}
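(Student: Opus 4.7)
The plan is to follow the proof of Lemma~\ref{lem:conformal_covariance_carpet} essentially verbatim, with the scaling identity~\eqref{eq:scaling_gasket_measure} playing the role that its simple-CLE analogue did before. Lemma~\ref{lem:locally_finite_gasket} supplies the local finiteness of $\E[\Xi]$, which justifies all manipulations with conditional expectations and limits.

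For conformal covariance, fix a conformal map $\varphi \colon D \to \wt{D}$, sample $(\wt{\Gamma},\wt{\Xi}) \sim \p_{\wt{D}}$ so that $\wt{\Gamma} = \varphi(\Gamma)$, and take $\wt{h}$ to be an independent zero-boundary GFF on $\wt{D}$. Setting $h^0 \coloneqq \wt{h} \circ \varphi$, which is a zero-boundary GFF on $D$, the coordinate-change relation~\eqref{eq:QS} for the generalized quantum natural measure gives
\begin{align*}
  \Lambda^{\wt{\Gamma},\wt{h}} \circ \varphi(dz) = \Lambda^{\Gamma,\, h^0 + Q\log|\varphi'|}(dz),
\end{align*}
and the pointwise application of~\eqref{eq:scaling_gasket_measure} with $A = Q\log|\varphi'(z)|$ turns this into $|\varphi'(z)|^{(\alpha'+\tfrac{1}{2}) Q \tfrac{\sqrt{\kappa'}}{2}} \Lambda^{\Gamma, h^0}(dz)$. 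Combining with $F_{\wt{D}}(\varphi(z)) = |\varphi'(z)|^{-(\tfrac{1}{2}+\tfrac{2}{\kappa'}+\tfrac{\kappa'}{32})} F_D(z)$ and the algebraic identity $(\alpha'+\tfrac{1}{2}) Q \tfrac{\sqrt{\kappa'}}{2} - (\tfrac{1}{2}+\tfrac{2}{\kappa'}+\tfrac{\kappa'}{32}) = d(\kappa')$, which holds since $\gamma = 4/\sqrt{\kappa'}$, $Q = \tfrac{\sqrt{\kappa'}}{2}+\tfrac{2}{\sqrt{\kappa'}}$, and $\alpha' = 4/\kappa'$, one concludes $\wt{\Xi} \circ \varphi(dz) = |\varphi'(z)|^d \Xi(dz)$.

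For the Markov property, let $V$ be a component of $U^*$, $\phi \colon \D \to V$ conformal, and set $\wh{\Gamma} = \phi^{-1}(\Gamma_V) \sim \p_{\D}^{\CLE_{\kappa'}}$. Introduce the same auxiliary Gaussian field $\FH$ and coupled zero-boundary GFF $\wh{h}$ on $\D$ that appear in the proof of Lemma~\ref{lem:conformal_covariance_carpet}, so that $h\circ\phi$ and $\wh{h} + \FH\circ\phi$ agree in law. The corresponding sequence of applications of~\eqref{eq:QS} and~\eqref{eq:scaling_gasket_measure} produces an overall factor $|\phi'(z)|^{(\alpha'+\tfrac{1}{2}) Q \tfrac{\sqrt{\kappa'}}{2}}\, e^{(\alpha'+\tfrac{1}{2})\tfrac{\sqrt{\kappa'}}{2} \FH\circ\phi(z)}$ multiplying $\Lambda^{\wh{\Gamma},\wh{h}}$, and the Gaussian moment computation gives
\begin{align*}
  \E\!\left[ e^{(\alpha'+\tfrac{1}{2})\tfrac{\sqrt{\kappa'}}{2} \FH\circ\phi(z)} \,\middle|\, \Gamma \right] = \exp\!\Bigl(\tfrac{1}{2}\bigl[(\alpha'+\tfrac{1}{2})\tfrac{\sqrt{\kappa'}}{2}\bigr]^2 \mathrm{Var}\,\FH(\phi(z))\Bigr).
\end{align*}
The crucial algebraic match is $\tfrac{1}{2}\bigl[(\alpha'+\tfrac{1}{2})\tfrac{\sqrt{\kappa'}}{2}\bigr]^2 = (\kappa'+8)^2/(32\kappa') = \tfrac{1}{2}+\tfrac{2}{\kappa'}+\tfrac{\kappa'}{32}$, i.e., precisely the exponent defining $F_{\D}$ on conformal radii. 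This produces the same cancellations as in the simple case and yields $\Xi|_V \circ \phi(dz) = |\phi'(z)|^d \wh{\Xi}(dz)$ with $(\wh{\Gamma},\wh{\Xi}) \sim \p_{\D}$.

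The only substantive departure from the simple-CLE proof is that the effective scaling rate is $\sqrt{\kappa'}/2$ rather than $\gamma/2$, so the main obstacle is purely algebraic: one must verify that this alternate exponent produces the same numerical values $d(\kappa')$ and $(\kappa'+8)^2/(32\kappa')$ when combined with $Q$, respectively when squared and halved. Since $\sqrt{\kappa'}/2 = 2/\gamma$ under the non-simple convention $\gamma = 4/\sqrt{\kappa'}$, both identities can be verified by direct computation, and no new conceptual ingredient beyond Lemma~\ref{lem:locally_finite_gasket} is required.
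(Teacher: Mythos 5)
Your proposal is correct and follows the same route as the paper, which proves this lemma simply by invoking the scaling identity~\eqref{eq:scaling_gasket_measure} and the local finiteness from Lemma~\ref{lem:locally_finite_gasket} and then repeating the proof of Lemma~\ref{lem:conformal_covariance_carpet} verbatim. Your explicit verification that the modified scaling rate $\sqrt{\kappa'}/2 = 2/\gamma$ still yields the exponents $d(\kappa')$ and $\tfrac12+\tfrac{2}{\kappa'}+\tfrac{\kappa'}{32}$ is exactly the (purely algebraic) content the paper leaves implicit.
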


\subsection{Existence of the measure on the $\CLE_4$ carpet}

Since the natural LQG measure on the $\CLE_\kappa$ carpet was not previously constructed in the critical case $\gamma = 2$ ($\kappa=4$), we will instead consider the weak limit of the measures defined above as $\kappa \nearrow 4$ in order to construct a conformally covariant measure on the $\CLE_4$ carpet. This is done by coupling the $\CLE$ in the way given by the loop-soup construction, see Section~\ref{sec:loop-soup}. We will prove the following.

\begin{prop}
\label{prop:CLE4_measure}
There exists a law $(\p_D) = (\p_D^4)$ as in Definition~\ref{def:cov_measure} on pairs $(\Gamma,\Xi)$ where the marginal law of $\Gamma$ is that of a $\CLE_4$ in $D$ with conformal covariance exponent $d=d(4)= 15/8$.
\end{prop}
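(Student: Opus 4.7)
The plan is to construct $(\p_D^4)$ as a subsequential weak limit of the measures $(\p_D^{\kappa_n})$ from Section~\ref{sec:ccsCLE} along any sequence $\kappa_n \nearrow 4$. The loop-soup construction of Section~\ref{sec:loop-soup} furnishes a monotone coupling: setting $c_n = (3\kappa_n-8)(6-\kappa_n)/(2\kappa_n) \nearrow 1$, the Brownian loop-soups $\wc{\Gamma}_{c_n}$ can be taken to be nested, their outer cluster boundaries yield $\CLE_{\kappa_n}$'s, and the corresponding carpets $\Upsilon^{\kappa_n}$ form a decreasing sequence which should converge (in the appropriate sense) to $\Upsilon^4$. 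I would couple all the measures $\Xi^{\kappa_n}$ together by plugging a single zero-boundary GFF $h$ on $D$, independent of the loop-soup, into the definition~\eqref{eq:measure} of each $\Xi^{\kappa_n}$.

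The first step is to establish the uniform moment bound $\sup_n \E[\Xi^{\kappa_n}(K)] < \infty$ for every compact $K \subseteq D$. Following the proof of Lemma~\ref{lem:locally_finite_carpet}, this reduces to the normalization $\E[\Lambda^{\Gamma,h^D}(D)] = 1$ on a unit boundary length $\gamma_n$-quantum disk (which is uniform in $n$), together with the continuity in $\kappa$ of the weight $F_D^\kappa(z)$ and of the exponent $(\alpha_n + \tfrac12)\tfrac{\gamma_n}{2}$ that appears when comparing a zero-boundary GFF with a quantum disk field. Supplemented by a second moment bound on small boxes to rule out pathological concentration, this yields tightness of the joint laws of $(\Gamma^{\kappa_n}, \Xi^{\kappa_n})$, viewed as random pairs consisting of a collection of loops together with a locally finite measure on $\overline{D}$ equipped with the vague topology. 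Passing to a subsequence, let $(\Gamma^4, \Xi^4)$ denote a weak limit; the loop-soup coupling ensures the marginal law of $\Gamma^4$ is $\p_D^{\CLE_4}$.

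The four properties of Definition~\ref{def:cov_measure} for $(\Gamma^4, \Xi^4)$ should follow by continuity. Support on the $\CLE_4$ carpet follows from $\bigcap_n \Upsilon^{\kappa_n} = \Upsilon^4$ (to be verified via the loop-soup construction) combined with the fact that the support of a weak limit of measures each supported in a closed set of a decreasing family lies in the intersection. Conformal covariance with exponent $d(4) = 15/8$ follows because each $\Xi^{\kappa_n}$ is conformally covariant with exponent $d(\kappa_n)$ by Lemma~\ref{lem:conformal_covariance_carpet} and $d$ is continuous with $d(\kappa_n) \to 15/8$, so the integral identity in Definition~\ref{def:cov_measure}\eqref{it:conf_cov} passes to the limit since $|\varphi'|^{d(\kappa_n)} \to |\varphi'|^{d(4)}$ locally uniformly. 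The $\CLE_\kappa$ Markov property transfers to the limit since the restriction of a loop-soup to a simply connected subdomain is again a loop-soup, making the monotone coupling compatible with the restriction operation, and the Markov property holds at each finite $n$. Local finiteness of $\E[\Xi^4]$ is inherited from the uniform bound via Fatou's lemma.

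The main obstacle will be the two interlinked geometric statements: verifying non-triviality $\Xi^4(D) > 0$ a.s., and identifying the support precisely as $\Upsilon^4$. For non-triviality, I would establish a uniform lower bound $\liminf_n \E[\Xi^{\kappa_n}(U)] > 0$ on a reference subdomain $U \Subset D$ from the MSW normalization together with the conformal covariance formula, ruling out concentration near $\partial D$ by a uniform tail estimate. For the support, the inclusion $\Upsilon^4 \subseteq \bigcap_n \Upsilon^{\kappa_n}$ is immediate since each $\CLE_{\kappa_n}$ loop encloses a $\wc{\Gamma}_{c_n}$-cluster contained in some $\wc{\Gamma}_1$-cluster; the reverse inclusion (up to a $\Xi^4$-null set) is more delicate and should follow from the observation that any $\CLE_4$ loop enclosing a given point $z$ arises as a cluster whose constituent Brownian loops are a.s.\ already present in $\wc{\Gamma}_{c_n}$ for some $n$ large enough, so that $z$ is enclosed at level $\kappa_n$ for $n$ sufficiently large whenever $z \notin \Upsilon^4$.
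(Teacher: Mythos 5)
Your overall strategy is the same as the paper's: take $\kappa_n \nearrow 4$, use the monotone loop-soup coupling so that the $\CLE_{\kappa_n}$ carpets decrease to the $\CLE_4$ carpet, get tightness of $(\Xi^{\kappa_n})$ (the paper simply normalizes the expected total mass on $\D$ to $1$ and invokes tightness of random measures, so your proposed second-moment bound on small boxes is unnecessary -- a uniform first-moment bound already suffices), pass to a subsequential weak limit, upgrade to a.s.\ convergence via Skorokhod, and deduce conformal covariance from $|\varphi'|^{d(\kappa_n)} \to |\varphi'|^{d(4)}$ locally uniformly. Up to that point the proposal matches the paper.

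There are, however, two genuine gaps. First, Definition~\ref{def:cov_measure} requires that $\Xi$ be a.s.\ \emph{determined by} $\Gamma$, and a subsequential weak limit only produces a coupling $(\Gamma^4,\Xi^4)$ in which $\Xi^4$ need not be measurable with respect to $\Gamma^4$; the paper repairs this by replacing $\Xi^4$ with its conditional expectation given the $\CLE_4$ carpet, and one must note that conformal covariance and the Markov property survive this replacement. Your proposal never addresses this requirement. Second, the passage of the Markov property to the limit is the real work of the proof, and the one-line justification ``the restriction of a loop-soup to a simply connected subdomain is again a loop-soup'' does not do it: the conditioning in Definition~\ref{def:cov_measure}\eqref{it:markov} is on the loops hitting $D \setminus U$ together with $\Xi|_{D\setminus V}$, and both the conditioning data and the domain change with $n$. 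Concretely, for a fixed $U$ and a fixed reference point $z$ one must (i) show that the components $V_n$ of $U_n^*$ containing $z$ decrease to the corresponding component $V$ of $U^*$, so that the uniformizing maps $\varphi_n \colon \D \to V_n$ converge (Carath\'eodory) to $\varphi \colon \D \to V$; (ii) identify the limit of the $\sigma$-algebras $\F_n$ generated by the Brownian loops belonging to clusters whose $\CLE_{\kappa_n}$ loops cross $\partial U$ with the analogous $\sigma$-algebra for $\CLE_4$; and (iii) combine the level-$\kappa_n$ Markov property with the a.s.\ convergence $(\Gamma^{\kappa_n},\Xi^{\kappa_n}) \to (\Gamma^4,\Xi^4)$ and the conformal-covariance limit argument to transfer the statement. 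Without these steps the limit claim is only asserted, not proved. (Your concern about non-triviality $\Xi^4(D)>0$ is not needed for this proposition, which only asks for the properties of Definition~\ref{def:cov_measure}.)
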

\begin{proof}
For each $\kappa \in (8/3,4)$ and $D \subseteq \C$ simply connected let $\p_D^\kappa$ be the law on pairs $(\Gamma^\kappa,\Xi^\kappa)$ where the marginal law of $\Gamma^\kappa$ is that of a $\CLE_\kappa$ on $D$ and $\Xi^\kappa$ is as in Section~\ref{sec:ccsCLE}.  We assume that $\Xi^\kappa$ is normalized to have expected total mass $1$ in the case that $D = \D$ (which in turn serves to normalize the expected mass on general $D \subseteq \C$ simply connected by conformal covariance).

Pick a sequence $\kappa_n \nearrow 4$ (equivalently, a sequence of intensity constants $c_n \nearrow 1$ of the Brownian loop-soups).  For each $n \in \N$ we let $(\Gamma^{\kappa_n},\Xi^{\kappa_n}) \sim \p_\D^{\kappa_n}$.  By coupling the $\CLE_{\kappa_n}$ as mentioned in Section~\ref{sec:loop-soup} we have that the sequence of $\CLE_{\kappa_n}$ carpets converges to the $\CLE_4$ carpet in the topology defined by the Hausdorff distance. This holds by monotonicity since the $\CLE_{\kappa_n}$ carpets form a decreasing sequence of sets (since the loop-soups defining them increase with $n$).  By~\cite[Theorem~4.10]{kallenberg2017rmbook}, the collection of measures $(\Xi^{\kappa_n})$ is weakly tight. It follows that $\Xi^{\kappa_n}$ converges weakly in law (at least along a subsequence, and if so, consider this subsequence instead) to a random measure $\Xi^4$ as $n \rightarrow \infty$. That the support of $\Xi^4$ is contained in the $\CLE_4$ carpet follows since the support of $\Xi^{\kappa_n}$ is contained in the $\CLE_{\kappa_n}$ carpet, which converges to the $\CLE_4$ carpet $\Gamma^4$ in $D$ as $\kappa_n \nearrow 4$.  Applying the Skorokhod representation theorem for weak convergence, we can (after possibly recoupling the laws onto a new probability space) assume that $(\Gamma^{\kappa_n},\Xi^{\kappa_n}) \to (\Gamma^4, \Xi^4)$ a.s.\ where the notion of convergence in the first (resp.\ second) coordinate is the Hausdorff topology (resp.\ weak convergence).

Suppose that $\wt{D} \subseteq \C$ is a simply connected domain and let $\varphi : \D \to \wt{D}$ be a conformal transformation.  For each $n \in \N$ we let $(\wt{\Gamma}^{\kappa_n}, \wt{\Xi}^{\kappa_n}) \sim \p_{\wt{D}}^{\kappa_n}$ be such that $\wt{\Gamma}^{\kappa_n} = \varphi(\Gamma^{\kappa_n})$ and $\wt{\Xi}^{\kappa_n}$ is defined for each Borel set $A$ by
\[ \wt{\Xi}^{\kappa_n}(A) = \int_{\varphi^{-1}(A)} |\varphi'(z)|^{d_n} d\Xi^{\kappa_n}(z)\]
where $d_n = d(\kappa_n)$.  We claim that $\wt{\Xi}^{\kappa_n}$ restricted to any compact set in $\wt{D}$ converges as $n \to \infty$ weakly in probability  to the measure $\wt{\Xi}^4$ defined by 
\[ \wt{\Xi}^4(A) = \int_{\varphi^{-1}(A)} |\varphi'(z)|^{d} d\Xi^4(z).\]
To see this, note that (recalling the notation $\Xi^\kappa(f) = \int f d\Xi^\kappa$) if $f$ is any continuous function which is compactly supported in $\D$ then 
\begin{align*}
    \Xi^{\kappa_n}(f |\varphi'|^{d_n}) - \Xi^{4}(f |\varphi'|^{d}) = \Xi^{\kappa_n}(f (|\varphi'|^{d_n}-|\varphi'|^{d})) - (\Xi^{4}-\Xi^{\kappa_n})(f |\varphi'|^{d}).
\end{align*}
Let $K \subseteq \D$ be compact so that the support of $f$ is contained in $K$.  Then we have that
\[ \E[ |\Xi^{\kappa_n}(f (|\varphi'|^{d_n}-|\varphi'|^{d}))|] \leq \sup_{z \in K} |f(z) (|\varphi'(z)|^{d_n}-|\varphi'(z)|^{d})| \to 0 \quad\text{as}\quad n \to \infty.\]
We also have that $(\Xi^{4}-\Xi^{\kappa_n})(f |\varphi'|^{d}) \to 0$ a.s.\ since $\Xi^{\kappa_n} \to \Xi^4$ a.s.\ weakly as $n \to \infty$.  Altogether, this proves the claim.

We have thus constructed a family of measures $(\p_D^4)$ on pairs consisting of a $\CLE_4$ and a Borel measure supported on the $\CLE_4$ carpet which is conformally covariant.  It may not be that the Borel measure we have constructed is determined by the $\CLE_4$ carpet but by replacing it with its conditional expectation given the $\CLE_4$ carpet we may assume that it is the case.  To finish the proof, it is left to show that the family of measures $(\p_D^4)$ respects the $\CLE_4$ Markov property.  To prove that this is the case, it suffices to verify it in the case $D = \D$ by conformal covariance.  Fix $U \subseteq \D$ simply connected.  Assume that we have $(\Gamma^{\kappa_n},\Xi^{\kappa_n}) \sim \p_D^{\kappa_n}$ as above where the $\Gamma^{\kappa_n}$ are coupled together using a common instance of the Brownian loop soup (so their carpets decrease to a $\CLE_4$ carpet).  For each $n \in \N$, we let $U_n^\Gamma$ the set of points obtained $U$ by removing from $U$ the closure of the union of the set of points surrounded by loops of $\Gamma^{\kappa_n}$ which intersect both $U$ and $\D \setminus U$.  Let $U^\Gamma$ be the set of points obtained by removing from $U$ the closure of the union of the points surrounded by loops of the limiting $\CLE_4$ which intersect both $U$ and $D \setminus U$.  Fix $z \in U$ deterministic and let $V$ be the component of $U^\Gamma$ which contains $z$ (we note that it could be that $V = \emptyset$).  For each $n \in \N$, let~$V_n$ be the component of~$U_n^\Gamma$ which contains~$z$.  Then we note that the~$V_n$ are decreasing.

For each $n \in \N$, let $\varphi \colon \D \to V_n$ be the unique conformal transformation with $\varphi_n(0) = z$ and $\varphi_n'(0) > 0$.  Then the sequence $(\varphi_n)$ converges locally uniformly to the unique conformal map $\varphi \colon \D \to V$ with $\varphi(0) = z$ and $\varphi'(0) > 0$.  For each $n \in \N$, let $\F_n$ be the $\sigma$-algebra generated by the Brownian loops used to generate $\Gamma^{\kappa_n}$ which are part of a cluster corresponding to a loop of $\Gamma^{\kappa_n}$ which intersects $U$ and $\D \setminus U$.  Then we note that the $\sigma$-algebra generated by all of the $\F_n$'s is equal to that generated by the Brownian loops used to construct $\Gamma^4$ which are part of a cluster corresponding to a loop of $\Gamma^4$ which intersects $U$ and $D \setminus U$.  Since the measures $\p_D^{\kappa_n}$ respect the $\CLE_{\kappa_n}$ Markov property for each $n$, it follows that the conditional law of the restriction of the pair consisting of the loops $\Gamma_{V_n}^{\kappa_n}$ of $\Gamma^{\kappa_n}$ contained in $V_n$ and the restriction of $\Xi^{\kappa_n}$ to $V_n$ given $\F_n$ is that of $\p_{V_n}^{\kappa_n}$.  Equivalently, the conditional law given $\F_n$ is equal to that of the image under $\varphi_n$ of a $\CLE_{\kappa_n}$ in $\D$ independent of $\F_n$ and the covariant pushforward of its measure under $\varphi_n$.  The claim thus follows from the arguments given above and the convergence of $\varphi_n$ to $\varphi$. 
\end{proof}

\section{Uniqueness of the $\CLE$ volume measure}
\label{sec:properties}

Since the existence of the conformally covariant measure was proved in Section~\ref{sec:ccCLE}, the part that we have left to establish is the uniqueness.  This is done by showing that the conformal covariance of the measure completely determines its intensity (Section~\ref{subsec:intensity_form}) and then combining this with an exploration of the $\CLE$ together with the property of respecting the $\CLE$ Markov property (Section~\ref{subsec:CLE_unique}). We now work on the unit disk $\D$.

\subsection{Form of the intensity}
\label{subsec:intensity_form}

Consider a sample $(\Gamma,\Xi)$ from a measure $\p_{\D}$ satisfying Definition~\ref{def:cov_measure}. Note that by part~\eqref{it:conf_cov} of Definition~\ref{def:cov_measure}, for any M\"{o}bius transformation $\phi:\D \to \D$,  letting $\wt{\Gamma} = \phi(\Gamma)$ and
\begin{align}
\label{eq:confcov}
	\wt{\Xi}(dz) = |\phi'(z)|^{-d} \Xi \circ \phi(dz),
\end{align}
it follows that $(\Gamma,\Xi) \stackrel{d}{=} (\wt{\Gamma},\wt{\Xi})$. Taking an expectation of both sides of~\eqref{eq:confcov} it follows that the intensity $\E[\Xi]$ of the measure $\Xi$ satisfies
\begin{align}\label{eq:intensity_cc}
    \E[\Xi](dz) = | \phi'(z) |^{-d} \E[\Xi] \circ \phi(dz)
\end{align}
for any M\"{o}bius transformation $\phi: \D \to \D$. Moreover, by part~\eqref{it:locally_finite_expectation} of Definition~\ref{def:cov_measure} we have that $\E[\Xi]$ is locally finite. 

We now prove that~\eqref{eq:intensity_cc} completely determines the form of $\E[\Xi]$. For any $D \subseteq \C$, let $\SB(D)$ denote the $\sigma$-algebra of Borel subsets of $D$.

\begin{lem}
\label{lem:density}
Let $m$ be a locally finite measure on the measurable space $(\D,\SB(\D))$ for which there exists $d \in \R$ such that for each M\"{o}bius transformation $\phi: \D \rightarrow \D$ we have
\begin{align}\label{eq:conformally_covariant_measure}
    m(dz) = |\phi'(z)|^{-d} m \circ \phi(dz).
\end{align}
Then there exists some constant $C_m$ such that
\begin{align*}
    m(dz) = C_m \!\left( \frac{1}{1-|z|^2} \right)^{2-d} dz.
\end{align*}
\end{lem}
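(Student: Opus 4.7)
The plan is to first exhibit the candidate measure $\lambda_d(dz):=(1-|z|^2)^{d-2}\,dz$, verify it satisfies~\eqref{eq:conformally_covariant_measure}, and then show that any $m$ satisfying the hypothesis is a constant multiple of $\lambda_d$. The central ingredient is the well-known M\"obius identity
\[
1-|\phi(z)|^2=|\phi'(z)|\,(1-|z|^2)
\]
for every M\"obius self-map $\phi:\D\to\D$, which, combined with the ordinary change of variables for Lebesgue measure (Jacobian $|\phi'|^2$ since $\phi$ is holomorphic), gives
\[
\lambda_d\circ\phi(A)=\int_A (1-|\phi(z)|^2)^{d-2}|\phi'(z)|^2\,dz=\int_A|\phi'(z)|^d\,d\lambda_d(z),
\]
confirming~\eqref{eq:conformally_covariant_measure} for $\lambda_d$.

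Next I would show that $m$ is absolutely continuous with respect to Lebesgue measure. The approach is to take the Lebesgue decomposition $m=m_{\mathrm{ac}}+m_{\mathrm{s}}$ and observe that, since each M\"obius self-map of $\D$ is a smooth diffeomorphism, the operation $m\mapsto|\phi'|^{-d}\,m\circ\phi$ preserves the absolutely continuous/singular dichotomy; by uniqueness of the Lebesgue decomposition, $m_{\mathrm{ac}}$ and $m_{\mathrm{s}}$ individually satisfy~\eqref{eq:conformally_covariant_measure}. To rule out a nonzero singular component, pick a Borel set $N$ with $\mathrm{Leb}(N)=0$ and $m_{\mathrm{s}}(\D\setminus N)=0$; applying the covariance with $A=\D\setminus N$ yields $m_{\mathrm{s}}(\D\setminus\phi(N))=0$ for every M\"obius self-map $\phi$ of $\D$. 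Intersecting $\phi(N)$ over a countable dense subgroup $\Gamma_0$ of the M\"obius group of $\D$ produces a $\Gamma_0$-invariant Borel support $N^*$ of Lebesgue measure zero. A separate atomic/continuous argument rules out atoms directly (since transitivity would force uncountably many atoms, contradicting local finiteness), and combining the $\Gamma_0$-invariance of $N^*$ with local finiteness of $m_{\mathrm{s}}$ and the denseness of $\Gamma_0$ should then force $m_{\mathrm{s}}\equiv 0$. I expect this to be the main technical obstacle, since ruling out a continuous singular part requires a careful exploitation of the transitive but non-compact M\"obius action beyond what the covariance alone provides.

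Having established $m(dz)=f(z)\,dz$ for some locally integrable $f\geq 0$, the covariance together with the ordinary Lebesgue change of variables reduces to
\[
f(z)=f(\phi(z))\,|\phi'(z)|^{2-d}\quad\text{for a.e.\ }z\in\D\text{ and every M\"obius }\phi:\D\to\D.
\]
For each $z\in\D$ the M\"obius map $\phi(w)=(w+z)/(1+\bar z w)$ satisfies $\phi(0)=z$ and $|\phi'(0)|=1-|z|^2$, so evaluating the identity at $w=0$ gives $f(z)=f(0)(1-|z|^2)^{d-2}$ for a.e.\ $z$. Setting $C_m:=f(0)$ yields the claimed formula and completes the proof.
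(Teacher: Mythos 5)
There is a genuine gap, and it is exactly at the point you flag yourself: ruling out the singular part of the Lebesgue decomposition. Your plan produces a Lebesgue-null Borel set $N^*$ carrying $m_{\mathrm{s}}$ which is invariant under a countable dense subgroup $\Gamma_0$ of the M\"obius group, plus an easy argument excluding atoms. But this does not conclude: a $\Gamma_0$-invariant Lebesgue-null carrier is not in itself contradictory (any countable union of $\Gamma_0$-orbits is such a set), and nothing in the covariance relation restricted to a countable group, combined with local finiteness, visibly forbids a continuous singular measure concentrated on such a set. You acknowledge this ("should then force", "the main technical obstacle"), so as written the central step of the lemma is missing rather than proved.

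The paper closes this gap with a short averaging argument that uses the \emph{continuum} of M\"obius maps rather than a countable subgroup. Writing $\phi_z(w) = \frac{w-z}{1-\overline{z}w}$, the covariance (inherited by $\sigma = m_{\mathrm{s}}$ through uniqueness of the Lebesgue decomposition, as in your proposal) gives $\sigma(dw) = |\phi_z'(w)|^{-d}\,(\sigma\circ\phi_z)(dw)$ for \emph{every} $z \in \D$; averaging this identity over $z\in\D$ against normalized Lebesgue measure and applying Fubini yields, for any Borel set $A$ with zero Lebesgue measure,
\begin{align*}
\sigma(A) = \frac{1}{\pi}\int \left( \int_\D \one_A\bigl(\phi_z^{-1}(w)\bigr)\, |(\phi_z^{-1})'(w)|^{d}\, dz \right) d\sigma(w) = 0,
\end{align*}
because for each fixed $w$ the set of $z$ with $\phi_z^{-1}(w) \in A$ has zero Lebesgue measure. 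Hence $\sigma$ is absolutely continuous with respect to Lebesgue measure and, being singular, vanishes. The remainder of your argument (deriving $f(z) = f(\phi(z))|\phi'(z)|^{2-d}$ from the Radon--Nikodym computation and specializing to the map sending $0$ to $z$ to get $f(z) = f(0)(1-|z|^2)^{d-2}$) coincides with the paper's, and the preliminary verification that $(1-|z|^2)^{d-2}dz$ satisfies the covariance, while correct, is not needed for the statement. To repair your proof, replace the countable-subgroup/invariant-support step with an averaging-over-the-group argument of the above type (or some other mechanism that genuinely uses the full transitive action), since that is the only part that does not go through.
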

\begin{proof}
We begin by showing that $m$ is absolutely continuous with respect to Lebesgue measure. It follows from the local finiteness that $m$ is $\sigma$-finite.   This allows us to apply the Lebesgue decomposition theorem to see that we can uniquely write
\begin{equation}
\label{eqn:leb_decomp}
dm(z) = f(z) dz + d\sigma(z),
\end{equation}
for some measurable function $f$ which is integrable on each compact subset of $\D$ and some measure $\sigma$ which is singular with respect to Lebesgue measure. We shall show that $\sigma = 0$. Since $m$ satisfies~\eqref{eq:conformally_covariant_measure}, the uniqueness of the decomposition~\eqref{eqn:leb_decomp} implies that $\sigma$ does as well.  For each $z \in \D$ we let
\begin{align*}
    \phi_z(w) = \frac{w-z}{1-\overline{z} w},
\end{align*}
so that $\phi_z \colon \D \to \D$ is the unique M\"obius transformation with $\phi_z(z) = 0$ and $\phi_z'(z) = (1-|z|^2)^{-1} > 0$.  Using the identity $\sigma(dw) = |\phi_z'(w)|^{-d} (\sigma \circ \phi)(dw)$ for all $z \in \D$ we thus have that
\[ \sigma(dw) = \frac{1}{\pi}\int_\D |\phi_z'(w)|^{-d} d(\sigma \circ \phi_z)(w) dz, \]
where here we emphasize that $dz$ denotes integration with respect to Lebesgue measure.
Suppose that $A \subseteq \D$ has zero Lebesgue measure.  Note that $\phi_z^{-1}(w) = \phi_{-z}(w)$  Then we have that
\[ \sigma(A) = \frac{1}{\pi} \int \int_\D \one_A(w) |\phi_z'(w)|^{-d} dz d(\sigma \circ \phi_z)(w) = \frac{1}{\pi} \int \left(\int_\D \one_A(\phi_z^{-1}(w)) |(\phi_z^{-1})'(w)|^d dz \right) d\sigma (w) = 0.\]
This implies that $\sigma$ is absolutely continuous with respect to Lebesgue measure hence $\sigma = 0$.

By assumption, we have for each conformal map $\phi : \D \to \D$ that
\begin{align}\label{eq:RNd1}
    \frac{d(m \circ \phi)}{dm}(z) = |\phi'(z)|^d.
\end{align}
Let $f$ be the density of $m$ with respect to Lebesgue measure.  Then we also have that
\begin{align*}
    \frac{d(m \circ \phi)}{dz} (z) = f(\phi(z)) |\phi'(z)|^2
\end{align*}
and thus
\begin{align}\label{eq:RNd2}
    \frac{d(m \circ \phi)}{dm}(z) = \frac{\frac{d(m \circ \phi)}{dz}(z)}{\frac{dm}{dz}(z)} = \frac{f(\phi(z)) |\phi'(z)|^2}{f(z)}.
\end{align}
Combining~\eqref{eq:RNd1} and~\eqref{eq:RNd2} we have
\begin{align}
\label{eqn:f_form}
    f(z) = f(\phi(z)) |\phi'(z)|^{2-d}.
\end{align}
Applying~\eqref{eqn:f_form} for the choice $\phi = \phi_z$ gives that
\begin{align*}
    f(z) = f(0)|\phi_z'(z)|^{2-d} = f(0) \!\left( \frac{1}{1-|z|^2} \right)^{2-d},
\end{align*}
which completes the proof.
\end{proof}

\subsection{Proof of uniqueness}
\label{subsec:CLE_unique}

This subsection is dedicated to the proof of the following uniqueness result.
\begin{prop}\label{prop:CLE_unique}
Fix $\kappa \in (8/3,8)$. Let $(\p_D)$ be the family of measures constructed in Section~\ref{sec:ccCLE} with $d = d(\kappa)$ given by~\eqref{eq:CLEdim}. Assume that $(\wt{\p}_D)$ is another family of probability measures satisfying Definition~\ref{def:cov_measure} with exponent $d$. Then there exists a deterministic constant $K>0$ such that if $(\Gamma,\Xi) \sim \p_D$ then $(\Gamma,K\Xi) \sim \wt{\p}_D$.
\end{prop}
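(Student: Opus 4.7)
The plan has three parts: pin down the intensity of $\Xi$ and $\wt\Xi$ using Lemma~\ref{lem:density}, use the $\CLE$ Markov property iteratively to propagate the equality from intensities to the full measures, and then conclude via a martingale convergence argument. Throughout, I couple $\p_\D$ and $\wt\p_\D$ so that they share a single $\CLE_\kappa$ instance $\Gamma$; this is valid because Definition~\ref{def:cov_measure} asserts that $\Xi$ and $\wt\Xi$ are a.s.\ determined by $\Gamma$.

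For the first step, applying Definition~\ref{def:cov_measure}(\ref{it:conf_cov}) with every M\"obius self-map of $\D$ and taking expectations shows that $\E[\Xi]$ satisfies~\eqref{eq:intensity_cc}, and Definition~\ref{def:cov_measure}(\ref{it:locally_finite_expectation}) gives local finiteness. Lemma~\ref{lem:density} then yields
\[
    \E[\Xi](dz) = C_\Xi (1-|z|^2)^{d-2}\, dz,\qquad \E[\wt\Xi](dz) = C_{\wt\Xi} (1-|z|^2)^{d-2}\, dz,
\]
for constants $C_\Xi, C_{\wt\Xi}\ge 0$. The construction in Section~\ref{sec:ccCLE} guarantees $\Xi(\D)>0$ a.s., so $C_\Xi>0$; set $K := C_{\wt\Xi}/C_\Xi$. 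A change-of-variables computation via conformal covariance transports this to any simply connected $V \subsetneq \C$, giving $\E[\Xi_V](dz) = C_\Xi\, r_V(z)^{d-2}\, dz$ and $\E[\wt\Xi_V] = K\, \E[\Xi_V]$.

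Next I construct an increasing filtration $(\F_n)$ by iteratively applying Definition~\ref{def:cov_measure}(\ref{it:markov}) using a countable dense family of simply connected test sets $(U_k)\subset\D$: at step $n$, reveal those loops of $\Gamma$ crossing $\partial U_k$ for some $k\le n$, together with the induced values of $\Xi$ and $\wt\Xi$ on the revealed region. The complement decomposes into simply connected bubbles $\{V_{n,j}\}$ whose diameters shrink to $0$ (by density of the $U_k$), so $\F_\infty := \bigvee_n \F_n = \sigma(\Gamma)$. By the Markov property combined with Step~1, conditional on $\F_n$ each bubble contributes
\[
    \E[\Xi\,|\,\F_n]\big|_{V_{n,j}}(dz) = C_\Xi\, r_{V_{n,j}}(z)^{d-2}\,dz,\qquad \E[\wt\Xi\,|\,\F_n]\big|_{V_{n,j}} = K\, \E[\Xi\,|\,\F_n]\big|_{V_{n,j}}.
\]
An induction on $n$ (with base case the intensity identity, inductive step propagating the ratio $K$ across each application of the Markov property to the newly revealed region) yields $\E[\wt\Xi\,|\,\F_n] = K\, \E[\Xi\,|\,\F_n]$ for every $n$. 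For any continuous compactly supported $f\colon\D\to\R$, the local finiteness of $\E[\Xi]$ makes $\E[\Xi(f)\,|\,\F_n]$ a uniformly integrable martingale, converging a.s.\ to $\E[\Xi(f)\,|\,\F_\infty] = \Xi(f)$, and similarly $\E[\wt\Xi(f)\,|\,\F_n]\to\wt\Xi(f)$. Hence $\wt\Xi(f) = K\Xi(f)$ a.s., and running $f$ over a countable convergence-determining class gives $\wt\Xi = K\Xi$ a.s.

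The main technical obstacle is making the exploration rigorous. In the simple regime $\kappa\in(8/3,4]$ the bubble decomposition is clean, but in the non-simple regime $\kappa\in(4,8)$ loops touch each other and $\partial\D$, so ensuring that each revealed bubble is simply connected and that diameters tend to zero requires the $\CLE$ exploration tree of~\cite{sheffield2009exploration} or an iterated $\BCLE$ construction together with the local finiteness of $\CLE_\kappa$. A subsidiary subtlety is the inductive propagation of $\wt\Xi = K\Xi$ onto the revealed region: since the Markov properties for $\p$ and $\wt\p$ condition on slightly different enlargements of $\sigma(\Gamma\setminus\Gamma_V)$ (namely $\Xi|_{D\setminus V}$ versus $\wt\Xi|_{D\setminus V}$), one has to verify at each step that the equality is preserved.
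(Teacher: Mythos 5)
Your overall architecture (Möbius covariance plus Lemma~\ref{lem:density} to pin the intensity, a loop exploration generating a filtration $\F_n$ with $\F_\infty=\sigma(\Gamma)$, the Markov property inside the unexplored bubbles, and martingale convergence) matches the paper's proof. But there is a genuine gap at the heart of the inductive step: the claim $\E[\wt\Xi\,|\,\F_n]=K\,\E[\Xi\,|\,\F_n]$ does not follow from the Markov property, because the Markov property only controls the conditional law of the measure \emph{inside} the unexplored components $V_{n,j}$; it says nothing about the mass that $\wt\Xi$ assigns to the revealed set, i.e.\ to the closure of the union of the discovered loops and the regions they surround. Since the carpet/gasket contains the loops themselves, Definition~\ref{def:cov_measure} does not a priori forbid a competitor measure of the form $K\Xi$ plus extra mass carried by the loops, and your induction ("propagating the ratio $K$ across the newly revealed region") is circular on exactly that part: conditioning on the values of $\Xi,\wt\Xi$ on the revealed region just reproduces those restrictions, it does not identify them. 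The subtlety you flag (conditioning on $\Xi|_{D\setminus V}$ versus $\wt\Xi|_{D\setminus V}$) is not the real issue; the real issue is proving zero mass on the loops.

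This is a substantive ingredient in the paper. For the constructed measure $\Xi$ it is Lemma~\ref{lem:loop_has_zero_mass}, which is immediate from the LQG construction for $\kappa\neq 4$ but requires a genuinely nontrivial limiting argument (an $\SLE_\kappa(\kappa-6)$ exploration, boundary-proximity estimates uniform in $\kappa$, and a two-sided trunk argument) at $\kappa=4$. For the \emph{unknown} measure $\wt\Xi$ one cannot argue this way at all; the paper instead normalizes both intensities to total expected mass $1$ (so the constants from Lemma~\ref{lem:density} coincide), computes via Lemma~\ref{lem:condint} that the bubbles alone already account for expected mass $1$ under both measures, and concludes that any additional mass of $\wt\Xi$ on the revealed set would force $\E[\wt\Xi(\D)]>1$, a contradiction. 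Your writeup also implicitly uses the interchange $\E[\int g\,d\M\,|\,\F_n]=\int g\,d\E[\M\,|\,\F_n]$ for random measures, which the paper isolates as Lemma~\ref{lem:condint}; that is minor, but the missing zero-mass-on-loops argument (and the accounting trick that transfers it to $\wt\Xi$) is exactly the step your proposal would need to be complete.
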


The main input in the proof of Proposition~\ref{prop:CLE_unique} is Lemma~\ref{lem:density}, which together with a suitable exploration of the $\CLE$ gives us the form of the conditional expectation of the measures on the unexplored parts. First, however, we shall prove the following lemma.

\begin{lem}\label{lem:condint}
Consider a probability space $(\Omega, \F, \p)$, let $\G \subseteq \F$ be a $\sigma$-algebra, let $g$ be a non-negative $\G \times \SB(\D)$-measurable function, and let $\M$ be a random measure on $\D$.  Then
\begin{align}\label{eq:conditional_intensity}
    \E\!\left[ \int_{\D} g(z) d\M(z) \, \middle| \, \G \right] = \int_{\D} g(z) d\E[\M \, | \, \G](z) \quad\text{a.s.}
\end{align}
Consequently, if $\M$ is independent of $\G$ then
\begin{align*}
    \E\!\left[ \int_{\D} g(z) d\M(z) \, \middle| \, \G \right] = \int_{\D} g(z) d\E[\M](z) \quad\text{a.s.}
\end{align*}
\end{lem}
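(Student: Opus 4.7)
The plan is a standard monotone class argument, building up from indicators of rectangles to all non-negative measurable $g$.

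First I would check \eqref{eq:conditional_intensity} for $g(\omega,z) = \one_B(\omega) \one_A(z)$ with $B \in \G$ and $A \in \SB(\D)$. The left-hand side equals $\E[\one_B \M(A) \giv \G] = \one_B \E[\M(A) \giv \G]$ by the pull-out property of conditional expectation, while the right-hand side equals $\one_B \E[\M \giv \G](A)$; these agree by the very definition of the conditional intensity, which exists as an a.s.\ locally finite random measure by \cite[Corollary~2.17]{kallenberg2017rmbook} (applicable whenever $\E[\M]$ is locally finite, which is the setting in our applications). By linearity the identity extends to non-negative simple functions of the form $\sum_i c_i \one_{B_i}(\omega) \one_{A_i}(z)$.

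Second, I would let $\LL \subseteq \G \otimes \SB(\D)$ be the class of product-measurable sets $E$ for which \eqref{eq:conditional_intensity} holds with $g = \one_E$. Then $\LL$ contains the $\pi$-system of rectangles, is closed under proper differences (using local finiteness of $\E[\M \giv \G]$ to avoid $\infty-\infty$ when subtracting), and is closed under monotone increasing unions (monotone convergence on both sides). Hence $\LL = \G \otimes \SB(\D)$ by the monotone class theorem, so \eqref{eq:conditional_intensity} holds for $g = \one_E$ with $E \in \G \otimes \SB(\D)$ arbitrary. Approximating a general non-negative $g$ from below by simple functions $g_n \nearrow g$ and applying monotone convergence once more on both sides completes the proof of \eqref{eq:conditional_intensity}.

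For the independence consequence, note that independence of $\M$ from $\G$ implies $\E[\M(A) \giv \G] = \E[\M(A)]$ a.s.\ for every Borel $A \subseteq \D$. Taking a countable generating $\pi$-system for $\SB(\D)$ and using uniqueness of measure extensions, one obtains $\E[\M \giv \G] = \E[\M]$ a.s.\ as random measures, whence the second identity follows from \eqref{eq:conditional_intensity}. The only delicate point in the whole argument is the subtraction step in the monotone class verification, where one must work on events where the integrals are finite; but this is exactly what the cited local finiteness of $\E[\M \giv \G]$ provides, so no essential obstacle arises.
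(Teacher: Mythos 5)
Your argument is correct and is essentially the paper's own proof: both verify the identity on product rectangles $B \times A$ with $B \in \G$, $A \in \SB(\D)$ directly from the definition of the conditional intensity and then upgrade via a monotone class argument followed by monotone convergence, the only difference being that you run the set ($\pi$--$\lambda$) version together with simple-function approximation, while the paper applies the functional monotone class theorem to the class of bounded functions satisfying the identity tested against events $A \in \G$. The finiteness caveat you flag at the proper-difference step is the same bookkeeping point the paper glosses over in its vector-space step, and it is harmless in the setting where the lemma is used (there $\E[\M]$ is locally finite, which is needed anyway for $\E[\M \,|\, \G]$ to admit a measure-valued version).
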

\begin{proof}
We note that the second assertion follows immediately from the first. Moreover, we are done if we prove for all $A \in \G$ that
\begin{align}
\label{eq:conditional_intensity_sufficient}
	\E\!\left[ \one_A \int_{\D} g(z) d\M(z)\right] = \E\!\left[ \one_A \int_{\D} g(z) d\E[\M \, | \, \G](z) \right]\!.
\end{align}
Let $\CG$ be the set of bounded $\G \times \SB(\D)$-measurable functions such that~\eqref{eq:conditional_intensity_sufficient} holds for all $A \in \G$.  Suppose that $B \times O \in \G \times \SB(\D)$ and we let $g = \one_{B \times O}$.  Then the left-hand side of~\eqref{eq:conditional_intensity_sufficient} is equal to $\E[ \one_{A \cap B} \M(O)]$ and the right-hand side is equal to $\E[ \one_{A \cap B} \E[\M \, | \, \G] (O) ]$. Since $A \cap B \in \G$, it follows from the definition of conditional intensity that these quantities are equal. Moreover, by the linearity of the expectation and the integral, it follows that~$\CG$ is a vector space. Furthermore, we note that if~$g_n$ is a sequence of non-negative functions in $\CG$ which increase to a bounded function $g$ then $g \in \CG$ by the monotone convergence theorem. Consequently, by the monotone class theorem, $\CG$ contains all bounded non-negative $\G \times \SB(\D)$-measurable functions.  Finally we note that if $g$ is an arbitrary non-negative $\G \times \SB(\D)$-measurable function, then we can pick a sequence of functions $g_n \in \CG$ which increase to~$g$ and by applying the monotone convergence theorem once more, this implies that $g$ satisfies~\eqref{eq:conditional_intensity_sufficient}. Consequently,~\eqref{eq:conditional_intensity_sufficient} and hence,~\eqref{eq:conditional_intensity} hold for all non-negative $\G \times \SB(\D)$-measurable functions.
\end{proof}

\begin{lem}
\label{lem:loop_has_zero_mass}
Suppose that $(\Gamma,\Xi) \sim \p_\D$ where $\p_D$ is as in the statement of Proposition~\ref{prop:CLE_unique}.  Then we a.s.\ have for each loop $\CL \in \Gamma$ that $\Xi(\CL) = 0$.
\end{lem}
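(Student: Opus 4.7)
I would prove the stronger statement that $\Xi(\bigcup \Gamma) = 0$ almost surely, by analyzing the random measure $\nu := \Xi|_{\bigcup \Gamma}$. Since $\Xi(\CL) \leq \nu(\D)$ for every loop $\CL \in \Gamma$, it suffices to show $\nu \equiv 0$ a.s.

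First I would observe that $\nu$ is conformally covariant with exponent $d$. If $\varphi : \D \to \D$ is a M\"obius transformation and $(\widetilde\Gamma, \widetilde\Xi) \sim \p_\D$ is the coupled pair with $\widetilde\Gamma = \varphi(\Gamma)$ as in Definition~\ref{def:cov_measure}(\ref{it:conf_cov}), then $\varphi$ sends loops to loops, so $\varphi(\bigcup \Gamma) = \bigcup \widetilde\Gamma$, and setting $\widetilde\nu := \widetilde\Xi|_{\bigcup \widetilde\Gamma}$ one computes for every Borel $A \subseteq \D$ that
\begin{align*}
    \widetilde\nu(\varphi(A)) = \widetilde\Xi(\varphi(A \cap \bigcup \Gamma)) = \int_{A \cap \bigcup \Gamma} |\varphi'(z)|^d\, d\Xi(z) = \int_A |\varphi'(z)|^d\, d\nu(z).
\end{align*}
Since $\nu \leq \Xi$ its intensity $\E[\nu]$ is locally finite, so Lemma~\ref{lem:density} applies and gives $\E[\nu](dz) = c_\nu (1-|z|^2)^{d-2}\, dz$ for some constant $c_\nu \geq 0$. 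The goal is now to show $c_\nu = 0$.

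Second, for every fixed $z \in \D$ one has $\p[z \in \bigcup \Gamma] = 0$. Indeed, $z \mapsto \p[z \in \bigcup \Gamma]$ is M\"obius-invariant on $\D$ and hence constant; since each loop is a.s.\ a fractal curve of Hausdorff dimension $1 + \kappa/8 < 2$, Fubini yields $\int_\D \p[z \in \bigcup \Gamma]\, dz = \E[\operatorname{Leb}(\bigcup \Gamma)] = 0$, forcing the constant to vanish.

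To conclude, a Fubini-type interchange should yield
\begin{align*}
    c_\nu \int_\D (1-|z|^2)^{d-2}\, dz \;=\; \E[\nu(\D)] \;=\; \int_\D \p[z \in \bigcup \Gamma]\, d\E[\Xi](z) \;=\; 0,
\end{align*}
whence $c_\nu = 0$. The main obstacle is justifying the middle equality, because $\Xi$ and $\bigcup \Gamma$ are both measurable functions of $\Gamma$ and are therefore fully correlated. I would handle this by applying Lemma~\ref{lem:condint} with the $\sigma$-algebra $\mathcal{G}_\epsilon$ generated by the (finitely many, by local finiteness of CLE) loops of $\Gamma$ of diameter at least $\epsilon$ together with $\Xi$ restricted to the complement of their interiors; the CLE Markov property combined with Lemma~\ref{lem:density} gives that $\E[\Xi \,|\, \mathcal{G}_\epsilon]$ is absolutely continuous with respect to Lebesgue on each unexplored pocket, so Lemma~\ref{lem:condint} applies there with integrand $\one_{\bigcup \Gamma}$ and kills the inner-loop contribution up to an error governed by the total $\Xi$-mass of the unexplored pockets. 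Sending $\epsilon \to 0$, this error tends to zero and the argument closes.
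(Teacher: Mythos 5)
The gap is precisely the step you yourself flag: the equality $\E[\nu(\D)] = \int_\D \p[z \in \bigcup \Gamma]\, d\E[\Xi](z)$, and the proposed repair does not close it. Conditioning on the loops of diameter at least $\epsilon$ is not an instance of the $\CLE$ Markov property of Definition~\ref{def:cov_measure}: given those loops, the law of the configuration in an unexplored pocket is a $\CLE$ \emph{conditioned} to contain no loop of diameter at least $\epsilon$, which is not M\"obius invariant in the pocket, so neither Lemma~\ref{lem:density} nor any covariance of intensities applies to $\E[\Xi \giv \G_\epsilon]$. More seriously, Lemma~\ref{lem:condint} requires the integrand to be $\G \times \SB(\D)$-measurable, and $\one_{\bigcup \Gamma}$ is not $\G_\epsilon$-measurable: the small loops inside a pocket are built from the same randomness as $\Xi$ restricted to that pocket, so the correlation you set out to remove reappears verbatim at the next scale (at best you would obtain the circular identity that the expected mass on loops inside a pocket equals the same unknown quantity for a fresh copy). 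The only part of $\bigcup \Gamma$ that is $\G_\epsilon$-measurable, namely the large loops, lies on the \emph{boundaries} of the pockets, where your claimed absolute continuity of the conditional intensity says nothing. Finally, the error you invoke, ``the total $\Xi$-mass of the unexplored pockets,'' does not tend to $0$ as $\epsilon \to 0$: $\Xi$ is carried by the carpet/gasket, so the pockets retain essentially all of the mass. There is also a structural obstruction to the whole strategy: an intensity absolutely continuous with respect to Lebesgue measure never prevents a random measure from charging a correlated Lebesgue-null set (take $\Xi = \delta_P$ with $P$ uniform in $\D$; the intensity is Lebesgue), so conformal covariance of intensities plus Fubini cannot by itself yield the lemma --- and indeed your argument never uses the construction of $\Xi$, whereas the statement is only claimed (and only used) for the constructed family.

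For comparison, the paper's proof is of a different nature. For $\kappa \neq 4$ the lemma is read off from the construction: the LQG natural measure $\Lambda^{\Gamma,h}$ a.s.\ assigns zero mass to each loop, hence so does its conditional expectation given $\Gamma$, which is $\Xi$ up to the deterministic factor $F_D$. The real work is the case $\kappa = 4$, where $\Xi$ is only a weak limit and could a priori pick up mass on loops; there the paper runs a quantitative argument with an $\SLE_\kappa(\kappa-6)$ exploration, hitting estimates for dyadic squares producing the exponent $2-\alpha-d<0$ uniformly in $\kappa$ near $4$, and a time-reversal trick to dispose of the single first-hitting point of the loop. Any correct proof has to use either the construction or estimates of this quantitative type; the soft axiomatic Fubini route you propose does not go through.
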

\begin{proof}
This holds for the quantum measure hence holds for $\Xi$ by construction for $\kappa \neq 4$.  Therefore we just have to give the proof in the case that $\kappa = 4$.  It suffices to show for each $z \in \D$ that if $\CL$ is the loop of $\Gamma$ which surrounds $z$ then $\Xi(\CL) = 0$ a.s. By conformal covariance, it suffices to prove this in the case that $z=0$.

We will deduce the result by taking a limit $\kappa \to 4$.  Fix $\kappa_0 \in (8/3,4)$ and $\kappa \in [\kappa_0,4)$.  Let $\eta^\kappa$ be an $\SLE_\kappa^0(\kappa-6)$ process in $\D$ from $-i$ to $0$.  Let $(W^\kappa,O^\kappa)$ be the driving pair for $\eta^\kappa$ (parameterized by log conformal radius as seen from $0$).  Fix $\delta > 0$, let
\[ \tau_1^\kappa = \inf\{t \geq 0 : |W_t^\kappa - O_t^\kappa| \geq \delta \}\]
and let $\sigma_1^\kappa = \inf\{t \geq \tau_1^\kappa : W_t^\kappa = O_t^\kappa\}$.  Given that we have defined $\tau_1^\kappa,\sigma_1^\kappa,\ldots,\tau_k^\kappa,\sigma_k^\kappa$, we let
\[ \tau_{k+1}^\kappa = \inf\{t \geq \sigma_k^\kappa : |W_t^\kappa - O_t^\kappa| \geq \delta \}\]
and let $\sigma_{k+1}^\kappa = \inf\{t \geq \tau_{k+1}^\kappa : W_t^\kappa = O_t^\kappa\}$.  For each $k \in \N$ we have that the law of $\eta^\kappa|_{[\tau_k^\kappa,\sigma_k^\kappa]}$ given $\eta^\kappa|_{[0,\tau_k^\kappa]}$ is that of an $\SLE_\kappa$ process in the component of $\D \setminus \eta^\kappa([0,\tau_k^\kappa])$ with $\eta^\kappa(\tau_k^\kappa)$ on its boundary.

For each $t \geq 0$ we let $\F_t^\kappa = \sigma(\eta^\kappa(s): s \leq t)$.  For each $n \in \N$ we let $\CS_n$ be the set of closed squares $S \subseteq \D$ with side length $2^{-n}$ and with corners in $(2^{-n} \Z)^2$.  Suppose that $S \in \CS_n$.  Fix $\zeta > 0$.  On the event that $S$ has distance at least $\zeta$ from $\eta^\kappa([0,\tau_k^\kappa])$, the probability that $\eta^\kappa|_{[\tau_k^\kappa,\sigma_k^\kappa]}$ hits $S' \in \CS_n$ which is adjacent to $S$ is $O(2^{-\alpha n})$ where $\alpha = 1-\kappa/8$ and the implicit constant depends only on $\zeta$ (see \cite[Lemma~2.10]{lw2013multipoint} and note that the constant in the statement can be taken to be uniform for $\kappa \in [\kappa_0,4]$).  On $\eta^\kappa([0,\sigma_k^\kappa]) \cap S = \emptyset$ but $\eta^\kappa([0,\sigma_k^\kappa])$ hits $S' \in \CS_n$ which is adjacent to $S$ we have that $\E[ \Xi^\kappa(S) \giv \F_{\sigma_k^\kappa}^\kappa] = O(2^{-d n})$.  Altogether, this implies that given $\F_{\tau_k^\kappa}^\kappa$ the conditional expectation of the $\Xi^\kappa$-mass of those $S \in \CS_n$ which are not hit by $\eta^\kappa|_{[0,\sigma_k^\kappa]}$ but are adjacent to $S' \in \CS_n$ which are hit by $\eta^\kappa|_{[0,\sigma_k^\kappa]}$ and have distance at least $\zeta$ to $\eta^\kappa([0,\tau_k^\kappa])$ is $O(2^{(2-\alpha-d)n})$.  By summing over $n$, we see that the conditional expectation of the $\Xi^\kappa$-mass of the set $U_{n,\kappa}^{\delta,\zeta}$ of points which are within distance at most $2^{-n}$ of $\eta^\kappa([\tau_k^\kappa,\sigma_k^\kappa])$ but distance at least $\zeta$ to $\eta^\kappa([0,\tau_k^\kappa])$ is $O(2^{(2-\alpha-d)n})$.  Note that $2-\alpha-d < 0$.

We assume that $\eta^\kappa$ is an exploration of a $\CLE_\kappa$ process $\Gamma^\kappa$ and let $\CL^\kappa$ be the loop of $\Gamma^\kappa$ which is surrounded by the origin.  Let $K^\kappa$ be such that $\eta^\kappa$ is drawing $\CL^\kappa$ at the time $\tau_{K^\kappa}^\kappa$.  We note that $K^\kappa$ is stochastically dominated by a geometric random variable whose parameter is uniform in $\kappa$ (as $\kappa$ is bounded away from $8/3$).  Then we have shown that
\begin{equation}
\label{eqn:probability_converges_to_zero}
\p[ \Xi^\kappa(U_{n,\kappa}^{\delta,\zeta}) \geq \epsilon] \to 0 \quad\text{as}\quad n \to \infty
\end{equation}
uniformly in $\kappa \in [\kappa_0,4)$. Suppose that we have a sequence $(\kappa_j)$ in $[\kappa_0,4)$ with $\kappa_j \to 4$ as $j \to \infty$ and we have used the Skorokhod representation theorem for weak convergence to couple the laws together so that we a.s.\ have $(\eta^{\kappa_j}, \Gamma^{\kappa_j},\Xi^{\kappa_j}) \to (\eta^4,\Gamma^4,\Xi^4)$ as $j \to \infty$.  Then we have that $U_{n,4}^{\delta,\zeta}$ is a.s.\ contained in $U_{n-1,\kappa_j}^{\delta,\zeta}$ for all $j$ large enough.  Thus if it were true that the $\Xi^4$ measure of the part of $\CL^4$ with distance at least $\zeta$ from $\eta^4([0,\tau_{K^4}^4])$ is positive with positive probability we would get a contradiction to~\eqref{eqn:probability_converges_to_zero}.  By sending $\zeta \to 0$ this implies that $\Xi^4( \CL^4 \setminus \eta^4([0,\tau_{K^4}^4])) = 0$ a.s.  Let $z^4$ be the place where $\eta^4$ first hits $\CL^4$.  By sending $\delta \to 0$ we thus see that $\Xi^4(\CL^4 \setminus \{z^4\}) = 0$ a.s.

Suppose that we are on the event that $\eta^4$ disconnects $0$ from $i$ at the time $\sigma_{K^4}^4$.  Then we can couple~$\eta^4$ together with the branch~$\wt{\eta}^4$ of the exploration tree from $-i$ to $i$ used to generate~$\Gamma^4$ to be the same up until the time~$\sigma_{K^4}^4$.  On this event, we note that $z^4$ is the same as the place where the trunk of $\wt{\eta}^4$ first hits $\CL^4$ which is a.s.\ \emph{not} the same as the first place where the time-reversal of the trunk of $\wt{\eta}^4$ first hits $\CL^4$.  Since the process obtained by following the loops of $\Gamma^4$ hit by the time-reversal of the trunk of $\wt{\eta}^4$ also has the law of an $\SLE_4^0(-2)$ we altogether see that $\Xi^4(\CL^4) = 0$ a.s.\ on the aforementioned event.  If $\eta^4$ disconnects $0$ from $i$ before $\sigma_{K^4}^4$, then we can apply the same argument in the complementary component containing $0$ at this time.  Applying this repeatedly if necessary, we see that $\Xi^4(\CL^4) = 0$ a.s.
\end{proof}

We now turn to the proof of Proposition~\ref{prop:CLE_unique}.
\begin{proof}[Proof of Proposition~\ref{prop:CLE_unique}]
Fix $\kappa \in (8/3,8)$. Let $(\p_D)$ and $(\wt{\p}_D)$ be as in the statement and assume that $(\Gamma,\Xi) \sim \p_{\D}$ and $(\Gamma,\Xi^*) \sim \wt{\p}_{\D}$ are coupled so that the $\CLE_\kappa$ is the same in both pairs and normalize them so that $\E[\Xi](\D) = \E[\Xi^*](\D) = 1$.

We note that $\E[\Xi](L) = 0$ and $\E[\Xi^*](L) = 0$ for all but at most countably many horizontal lines $L$ (for otherwise $\E[\Xi](\D) = \infty$ or $\E[\Xi^*](\D) = \infty$).  We can therefore find a countable collection of lines $(L_k)$ so that $\E[\Xi](L_k) = 0$ and $\E[\Xi^*](L_k) = 0$ for all $k$ and so that $\D \cap (\cup_k L_k)$ is dense in $\D$.  For each $n$, we let $\F_n$ be the $\sigma$-algebra which is generated by the loops of $\Gamma$ which intersect $\cup_{k=1}^n L_k$.  By density, we have that for every loop $\CL$ of $\Gamma$ there exists $k$ so that $L_k$ intersects the region surrounded by $\CL$ which, in turn, implies that $\F_\infty = \sigma(\F_n : n \in \N)$ is equal to $\sigma(\Gamma)$.  For each $n$, we let $D_j^n$ denote the collection of components in the complement in $\D$ of the closure of the union of the points surrounded by loops of $\Gamma$ which intersect $\cup_{k=1}^n L_k$.  For each $j,n$, we let $\varphi_j^n: \D \to D_j^n$ be a conformal map with some arbitrary rule for fixing it (for example by considering some fixed enumeration of the rationals $(r_\ell)$ in $\D$, taking $r_{j,n}$ to be the element of $(r_\ell)$ in $D_j^n$ with the smallest index $\ell$, and then taking $\varphi_j^n$ to be the unique conformal map from $\D$ to $D_j^n$ with $\varphi_j^n(0) = r_{j,n}$ and $(\varphi_j^n)'(0) > 0$).  Then the conditional law given $\F_n$ of the image under $(\varphi_j^n)^{-1}$ of the loops of $\Gamma$ contained in $\ol{D_j^n}$  is that of a $\CLE_\kappa$ in $\D$ independently of the loops of $\Gamma$ contained in the other $\ol{D_i^n}$. For each $n \in \N$, we define the random measure $\Xi_n$ by
\begin{align*}
    \Xi_n(O) = \E[\Xi \giv \F_n](O).
\end{align*}
Then for each $n \in \N$
\begin{align*}
    \E[\Xi_n(\D)] = \E[\Xi_n](\D) = \E[\Xi](\D) = 1.
\end{align*}
Thus by the martingale convergence theorem for every open set $O \subseteq \D$ we have that $\Xi_n(O)$ converges a.s.\ to $\Xi(O)$ as $\Xi(O)$ is $\F_\infty$-measurable.

Let $(\Xi^{j,n})_j$ be i.i.d.\ copies of $\Xi$. Then by Lemmas~\ref{lem:density}, \ref{lem:condint} we have that
\begin{align*}
    &\Xi_n( \cup_j D_j^n) = \E\big[\Xi(\cup_j D_j^n)\, \big|\, \F_n\big] = \sum_j \E\big[ \Xi(D_j^n) \, \big| \, \F_n \big] = \sum_j \E\!\left[ \int_{\D} |(\varphi_j^n)'(z)|^d d\Xi^{j,n}(z) \, \middle| \, \F_n \right] \\
    &= \sum_j \int_{\D} |(\varphi_j^n)'(z)|^d d\E[\Xi^{j,n}](z) = C_\Xi \sum_j \int_{\D} |(\varphi_j^n)'(z)|^d \left( \frac{1}{1-|z|^2} \right)^{2-d} dz.
\end{align*}
The same calculation implies that if we let
\begin{align*}
    \Xi_n^*(O) = \E[\Xi_n^*(O) \giv \F_n]
\end{align*}
then $\Xi_n^*(\cup_j D_j^n) = \Xi_n(\cup_j D_j^n)$ a.s. (The reason that we do not have an extra multiplicative factor is that we normalized $\Xi^*$, $\Xi$ so that $\E[\Xi(\D)] = \E[\Xi^*(\D)] = 1$ in the beginning.)

Lemma~\ref{lem:loop_has_zero_mass} and our choice of lines $(L_k)$ implies that $\E[ \Xi( \D \setminus \cup_j D_j^n) \giv \F_n] = 0$ for each $n \in \N$.  We claim that it also holds that $\E[ \Xi^*( \D \setminus \cup_j D_j^n) \giv \F_n] = 0$ for all $n \in \N$. Indeed for each $n$ we have that
\begin{align*}
    1 = \E[ \Xi(\D) ] = \E[\Xi_n(\cup_j D_j^n)] = \E[\Xi_n^*(\cup_j D_j^n)].
\end{align*}
Thus if $\Xi^*(\D \setminus \cup_j D_j^n)$ is not a.s.\ zero then $\E[\Xi^*(\D)] > 1$ which contradicts our initial choice of normalization.  Thus $\Xi_n = \Xi_n^*$ a.s.  Consequently, since $\Xi_n \to \Xi$ and $\Xi_n^* \to \Xi^*$ and $\Xi_n = \Xi_n^*$ for all $n \in \N$ a.s., we have that $\Xi^* = \Xi$ a.s., proving the proposition.
\end{proof}

We now wrap up the proof of Theorem~\ref{thm:mainresult} by collecting the results of the last two sections.
\begin{proof}[Proof of Theorem~\ref{thm:mainresult}]
By Lemmas~\ref{lem:conformal_covariance_carpet} and~\ref{lem:conformal_covariance_gasket} and Proposition~\ref{prop:CLE4_measure} we have that there exist laws on pairs $(\Gamma,\Xi)$ as in Definition~\ref{def:cov_measure} with $d = d(\kappa)$ given by~\eqref{eq:CLEdim} in the cases $\kappa \in (8/3,4)$, $\kappa \in (4,8)$ and $\kappa = 4$, respectively (note that property \eqref{it:locally_finite_expectation} follows from Lemmas~\ref{lem:locally_finite_carpet} and~\ref{lem:locally_finite_gasket} in the cases $\kappa \in (0,4)$ and $\kappa \in (4,8)$ and by the choice of normalization in the proof of Proposition~\ref{prop:CLE4_measure} in the case $\kappa = 4$). The uniqueness of the law on such pairs follows by Proposition~\ref{prop:CLE_unique}. Hence, the result follows.
\end{proof}

\bibliographystyle{abbrv}
\bibliography{biblio}

\end{document}